\author{Pete L. Clark}
\author{Frederick Saia}
\title{CM elliptic curves: volcanoes, reality and applications, Part II}
\DeclareMathAlphabet{\curly}{U}{rsfs}{m}{n}
\newtheorem{thm}{Theorem}[section]
\newtheorem{example}[thm]{Example}
\newtheorem{cor}[thm]{Corollary}
\newtheorem{prop}[thm]{Proposition}
\newtheorem{lemma}[thm]{Lemma}
\theoremstyle{definition}
\newtheorem{remark}[thm]{Remark}
\begin{document}
\newcommand\leg{\genfrac(){.4pt}{}}
\renewcommand{\labelenumi}{(\roman{enumi})}
\def\ff{\mathfrak{f}}
\def\N{\mathbb{N}}
\def\Q{\mathbb{Q}}
\def\Z{\mathbb{Z}}
\def\R{\mathbb{R}}
\def\OO{\mathcal{O}}
\def\aa{\mathfrak{a}}
\def\pp{\mathfrak{p}}
\def\qq{\mathfrak{q}}
\def\Aa{\curly{A}}
\def\Dd{\curly{D}}
\def\Gg{\curly{G}}
\def\Pp{\curly{P}}
\def\Ss{\curly{S}}
\def\End{\mathrm{End}}
\def\M{\curly{M}}
\newcommand{\Ok}{\Z_K}
\newcommand{\tors}{\operatorname{tors}}
\newcommand{\Pic}{\operatorname{Pic}}
\newcommand{\ord}{\operatorname{ord}}
\newcommand{\GL}{\operatorname{GL}}
\newcommand{\PGL}{\operatorname{PGL}}
\newcommand{\Gal}{\operatorname{Gal}}
\newcommand{\gcdl}{\operatorname{gcd}}
\renewcommand{\gg}{\mathfrak{g}}

\newcommand{\abedit}[1]{{\color{blue} \sf  #1}}
\newcommand{\abbey}[1]{{\color{blue} \sf $\clubsuit\clubsuit\clubsuit$ Abbey: [#1]}}
\newcommand{\margAb}[1]{\normalsize{{\color{red}\footnote{{\color{blue}#1}}}{\marginpar[{\color{red}\hfill\tiny\thefootnote$\rightarrow$}]{{\color{red}$\leftarrow$\tiny\thefootnote}}}}}
\newcommand{\Abbey}[1]{\margAb{(Abbey) #1}}
\newcommand{\pcedit}[1]{{\color{brown} \sf #1}}
\newcommand{\ra}{\rightarrow}
\newcommand{\C}{\mathbb{C}}
\newcommand{\Ker}{\operatorname{Ker}}
\newcommand{\F}{\mathbb{F}}
\newcommand{\Spec}{\operatorname{Spec}}
\newcommand{\CM}{\operatorname{CM}}
\newcommand{\Aut}{\operatorname{Aut}}
\renewcommand{\aa}{\mathfrak{a}}
\newcommand{\bb}{\mathbf{b}}
\newcommand{\rad}{\operatorname{rad}}
\newcommand{\dd}{\mathfrak{d}}
\newcommand{\lcm}{\operatorname{lcm}}
\renewcommand{\P}{\mathbb{P}}
\newcommand{\cyc}{\operatorname{cyc}}
\newcommand{\PP}{\mathbb{P}}
\newcommand{\oddCM}{\operatorname{odd-CM}}
\newcommand{\cc}{\mathbf{c}}
\newcommand{\rr}{\mathfrak{r}}

\maketitle

\begin{abstract}
Let $M \mid N$ be positive integers, and let $\Delta$ be the discriminant of an order in an imaginary quadratic field $K$.  When $\Delta_K < -4$, the first author determined the fiber of the morphism $X_0(M,N) \ra X(1)$ over the closed point $J_{\Delta}$ 
corresponding to $\Delta$ and showed that all fibers of the map $X_1(M,N) \ra X_0(M,N)$ over $J_{\Delta}$ were connected. \cite{Clark22a}.  In the present work we complement the work of \cite{Clark22a} by addressing the most difficult cases $\Delta_K \in \{-3,-4\}$.  These works provide all the information needed to compute, for each positive integer $d$, all subgroups of $E(F)[\tors]$, where $F$ is a number field of degree $d$ and $E_{/F}$ is an elliptic curve with complex multiplication.
\end{abstract}

\tableofcontents

\section{Introduction}
\subsection{Main Results}
This paper is a direct continuation of \cite{Clark22a}, which determined the $\Delta$-CM locus on the modular 
curves $X_0(M,N)_{/\Q}$ for $\Delta$ the discriminant of an order in an imaginary quadratic field $K$ \emph{different from} $\Q(\sqrt{-1})$ or $\Q(\sqrt{-3})$.  This work also gave a completely explicit description of the  \emph{primitive} residue fields of $\Delta$-CM closed points: i.e., the residue fields $\Q(P)$ of $\Delta$-CM points $P \in X_0(M,N)$ for which there is no $\Delta$-CM point $P' \in X_0(M,N)$ such that $\Q(P')$ embeds into $\Q(P)$ as a proper subfield.  Finally, the work \cite{Clark22a} also gave an inertness result for the fibers of $X_1(M,N) \ra X_0(M,N)$ over $\Delta$-CM points on $X_0(M,N)$, which yields a complete description 
of the multiset of degrees of $\Delta$-CM closed points on $X_1(M,N)_{/\Q}$.  Using only the knowledge of the degrees of primitive residue fields of $\Delta$-CM points on $X_0(M,N)$ yields the corresponding knowledge of degrees of primitive residue fields of $\Delta$-CM 
points on $X_1(M,N)$, which is precisely what is needed in order to classify torsion subgroups of $\Delta$-CM elliptic curves 
over number fields of any fixed degree.   
\\ \\
In the present work we treat the excluded fields $\Q(\sqrt{-1})$ and $\Q(\sqrt{-3})$.  If $\Delta = \ff^2 \Delta_K$ 
and $\Delta_K \in \{-3,-4\}$, then for all $M \mid N$ we determine the $\Delta$-CM locus on $X_0(M,N)_{/\Q}$ and explicitly determine 
all primitive residue fields of $\Delta$-CM points on $X_0(M,N)_{/\Q}$.  Finally, we show that the fibers of $X_1(M,N) \ra X_0(M,N)$ over $\Delta$-CM points are connected.\footnote{The difference between ``inertness'' and ``connectedness'' is that the latter 
allows ramification.  The map $X_0(M,N) \ra X(1)$ can only ramify over $0$, $1728$ and $\infty$.}
\\ \\
Taken together, the works \cite{Clark22a} and the present work give a complete description of torsion subgroups of CM elliptic curves 
over number fields.  In particular, we get an algorithm that takes as input a positive integer $d$ and outputs the complete, 
finite list of groups isomorphic to $E(F)[\tors]$ where $F$ is a number field of degree $d$ and $E_{/F}$ is a CM elliptic curve, conditionally on knowing the finite list of imaginary quadratic orders of class number properly dividing $d$.  With current knowledge about class numbers, this allows us to enumerate CM torsion in number field degree $d \leq 200$.  If we are willing to assume the Generalized 
Riemann Hypothesis (GRH), then by \cite[Cor. 1.3]{LLS15} we can enumerate CM torsion in number field degree $d \leq 18104$.  
This enumeration will appear in a future work.

\subsection{Review of the $\Delta_K < -4$ case}
Let us outline the proof of the computation of the fiber of $X_0(M,N) \ra X(1)$ over the closed $\Delta$-CM point $J_{\Delta}$ on $X(1)$ given in \cite{Clark22a} so that we can see what must be modified to treat the $\Delta_K \in \{-3,-4\}$ case.
\\ \\
\textbf{Step 1}: We handle the case $X_0(1,\ell^a) = X_0(\ell^a)$ for a prime power $\ell^a$. \\
Step 1a: The $\ell$-power isogeny graph corresponding to a $\Delta$-CM elliptic curve with $\Delta = \ell^{2L} \ff_0^2 \Delta_K$ (where $\gcd(\ell,\ff_0) = 1$) has the structure of an 
\textbf{$\ell$-isogeny volcano}.  This is an infinite graph that is very close to being a rooted tree and with vertex set stratified into levels indexed by $\Z^{\geq 0}$; the set of vertices at level $L$ corresponds to the set of $j$-invariants of $(\Delta = \ell^{2L} \ff_0^2\Delta_K)$-CM elliptic curves, which is a torsor under $\Pic \OO(\Delta)$, the Picard group of the imaginary quadratic order $\OO(\Delta)$ of 
discriminant $\Delta$.   
Cyclic $\ell^a$-isogenies with $\Delta$-CM source elliptic curve correspond to paths of length $a$ in this volcano with initial 
vertex at level $L$.  From this it is easy to see that if $\varphi: E \ra E'$ is such an isogeny and if the target elliptic curve has level $L'$, 
then over $K$ the field of moduli of $\varphi$ is the ring class field $K(\ff_0 \ell^{\max(L,L')})$, which is equal to $K(j(E),j(E'))$.  It follows that 
\[ \Q(j(E),j(E')) \subseteq \Q(\varphi) \subseteq K(j(E),j(E')). \]
Step 1b: Thus the field of moduli $\Q(\varphi)$ of $\varphi$ is determined when $\Q(j(E),j(E'))$ contains $K$: this happens if and only if $\Q(j(E),j(E'))$ has no real embedding ($j(E)$ and $j(E')$ are ``not coreal'').  Otherwise we are left to decide whether $\Q(\varphi)$ is $\Q(j(E),j(E'))$ or $K(j(E),j(E'))$.  Both the coreality question and the dichotomy between the two possible fields can 
be answered in terms of the natural action of complex conjugation on the $\ell$-isogeny volcano.  Determining the explicit action of 
$\gg_{\R} = \{1,c\}$ on the $\ell$-isogeny volcano is one of the main contributions of \cite{Clark22a}.  In the end, depending upon whether or not the path is fixed under complex conjugation or not,\footnote{Since closed points on $X_0(\ell^a)$ correspond 
to certain equivalence classes of paths, the actual answer is slightly more complicated than this but can still be determined from the action of 
complex conjugation on the isogeny graph.} we get that $\Q(\varphi)$ is isomorphic to $\Q(\ff_0 \ell^{\max(L,L'})$ -- 
that is, isomorphic to a \emph{rational ring class field} -- the field obtained by adjoining to $\Q$ the $j$-invariant of an elliptic curve with CM by the imaginary quadratic 
order of discriminant $(\ff_0 \ell^{\max(L,L')})^2 \Delta_K$ -- or to the ring class field $K(\ff_0 \ell^{\max(L,L')})$ -- which is obtained 
by adjoining to $K$ the same $j$-invariant.
\\ \\
\textbf{Step 2:}  We pass from the prime power case $X_0(\ell^a)$ to the case $X_0(N)$. \\
Step 2a: For any closed point $p \notin \{0,1728,\infty\}$ on the $j$-line $X(1)$, if $N = \ell_1^{a_1} \cdots \ell_r^{a_r}$, let 
$F$ be the fiber of $\pi: X_0(N) \ra X(1)$ over $p$, and for $1 \leq i \leq r$ let $F_i$ be the fiber of $X_0(\ell_i^{a_i}) \ra X(1)$ 
over $p$.  Then we show that $F$ is the fiber product of $F_1,\ldots,F_r$ over $\Spec \Q(p)$.  Since each $F_i$ is the spectrum of a 
finite product of number fields, each isomorphic to either a rational ring class field or a ring class field, $F$ is determined by $F_i$ 
in terms of tensor products of these rational ring class fields and ring class fields. \\
Step 2b: Letting $\Q(\ff) \coloneqq \Q(j(\C/\OO(\ff^2 \Delta_K)))$ be the rational ring class field of conductor $\ff$, we show that 
\begin{equation}
\label{INTROEQ1}
 K(\ff_1) \otimes_{K(\gcd(\ff_1,\ff_2)} K(\ff_2) = K(\lcm(\ff_1,\ff_2)) 
\end{equation}
and
\begin{equation}
\label{INTROEQ2}
 \Q(\ff_1) \otimes_{\Q(\gcd(\ff_1,\ff_2)} \Q(\ff_2) = \Q(\lcm(\ff_1,\ff_2)). 
\end{equation}
These identities allow us to write down $F$ explicitly as a product of number fields.  
\\ \\ 
\textbf{Step 3:} Lifting a point $P$ on $X_0(N)$ induced by an isogeny $\varphi: E \ra E'$ to a point $\tilde{P}$ $X_0(M,N)$ involves 
scalarizing the modulo $M$ Galois representation on $E$, with the effect that $\Q(\tilde{P})$ is obtained from $\Q(P)$ by adjoining 
the projective $M$-torsion field $\Q(P)(\P E[M])$.  This uses that because $\Delta < -4$, the projective $M$-torsion field 
is independent of the choice of $\Q(P)$-rational model.  Indeed, for all $M \geq 3$, if $E$ is a $\Delta = \ff^2 \Delta_K$-CM elliptic curve, 
we find that $\Q(P)( \P E[M]) = \Q(P)K(M\ff)$.  

\subsection{The $\Delta_K \in \{-3,-4\}$ case} When $\Delta_K \in \{-3,-4\}$ and $E$ is a $\Delta = \ff^2 \Delta_K$-CM elliptic curve, then for certain $\ff > 1$ \emph{some} 
of the above steps still hold.  However, when $\ff = 1$, \emph{none} of the above arguments hold as stated.  While some of the change are routine, in several places 
we have to make arguments that are significantly more intricate than those of \cite{Clark22a}.  Let us describe the modifications:
\\ \\
Step 1a: When $\Delta_K \in \{-3,-4\}$, $\ell$ is a prime numer, and $\ff_0 > 1$, then again the $\ell$-power 
isogeny graph of a $(\Delta = (\ell^L \ff_0)^2 \Delta_K)$-CM elliptic curve is an $\ell$-volcano.  When $\ff_0 = 1$, the $\ell$-power 
isogeny graph is no longer an $\ell$-volcano.  This is in fact the least of our worries, as the deviation from ``volcanoness'' is minor and had already been well understood: the differences involve multiple edges descending from the surface and in subtleties involving orientations of edges which necessitate more care in the notion of a ``nonbacktracking path.'' The structural information we need is found, for instance, in \cite{Sutherland12}.
\\ \\
Step 1b: When $\ff_0^2 \Delta_K \in \{-3,-4\}$, there is in fact \emph{no} canonical action of complex conjugation on the 
$\ell$-isogeny graph.   To define the action of complex conjugation on isogenies, we need \emph{a priori} a chosen $\R$-model 
on the elliptic curve.  Every real elliptic curve has precisely two nonisomorphic $\R$-models.  When $\Delta < -4$ these two $\R$-models 
are quadratic twists of each other, so the action of $\gg_{\R}$ on finite subgroup schemes of $E_{/\R}$ is independent of the choice
of $\R$-model.  However, when $\Delta \in \{-3,-4\}$ this is no longer the case.  \\ \indent
It turns out that when $\ff_0^2 \Delta_K \in \{-3,-4\}$, in most cases we can define a \emph{noncanonical} action of $\gg_{\R}$ 
on the $\ell$-isogeny graph that allows us to determine fields of modulo of $\Delta$-CM points on $X_0(N)$ in terms of real and complex paths, as in \cite{Clark22a}.  But there are two cases in which we need to pass from this isogeny graph to a certain double cover and 
define an action of $\gg_{\R}$ on that.  By looking carefully on how surface edges change the $\R$-structure of a $\Delta_K$-CM elliptic 
curve we are able to carry out the analysis as in \cite{Clark22a}.
\\ \\
Step 2a: The fiber product result referred to above \cite[Prop. 3.5]{Clark22a} is false when $j \in \{0,1728\}$: the modular curve 
$X_0(\ell_1^{a_1} \cdots \ell_r^{a_r})$ is a desingularization of the fiber product of the morphisms $X_0(\ell_i^{a_i}) \ra X(1)$.  For lack of this scheme-theoretic result we need other techniques to compute the composite level fibers.  We make use of the Atkin-Lehner involution $w_N$ to reduce to either the $\Delta_K < -4$ case 
or the case when both the source and target are $\Delta_K$-CM, in which case we have a \emph{proper} isogeny in the sense of 
\cite[\S 3.4]{Clark22a} and we can reason via $\Z_K$-ideals.  
\\ \\
Step 2b: When $\Delta \in \{-3,-4\}$, equations (\ref{INTROEQ1}) and (\ref{INTROEQ2}) hold for certain pairs $\ff_1,\ff_2 \in \Z^+$ 
and fail for others: in general the compositum $K(\ff_1)K(\ff_2)$ is a proper subfield of $K(\lcm(\ff_1,\ff_2))$; and the same holds 
for rational ring class fields $\Q(\ff_1)$ and $\Q(\ff_2)$.  In \S 2 we use class field theory to show that it is still the case that $K(\ff_1)$ and $K(\ff_2)$ are linearly disjoint over $K(\gcd(\ff_1,\ff_2))$ and to determine the index of $K(\ff_1)K(\ff_2)$ in $K(\lcm(\ff_1,\ff_2))$; we do the same for rational ring class fields; and again we calculate tensor products of rational ring class fields and ring class fields.  This calculation is relatively straightforward, but
the difference in the answer causes complications related to Step 2a: when $\Delta_K < -4$, if $N_1,N_2$ are coprime positive integers, 
to show that the residue field $\Q(P)$ of a point $P \in X_0(N)$ contains  e.g. a ring class field $K(N_1 N_2)$, it suffices to show 
that it contains each of $K(N_1)$ and $K(N_2)$.   When $\Delta_K \in \{-3,-4\}$, we cannot argue in this way.  Instead our method is to 
find a rationally isogenous elliptic curve with CM conductor divisible by $N_1N_2$.  
\\ \\
Step 3: When $\Delta \in \{-3,-4\}$, the projective $M$-torsion field $F( \P E[M])$ of a $\Delta$-CM elliptic curve may depend upon the model.  Nevertheless we compute the residue field $\Q(P)$ for any $\Delta$-CM point $P \in X_0(M,M)$ (this amounts to computing 
the minimal possible projective $M$-torsion field as we range over models). Using this and the maps $\alpha: X_0(M,N) \ra X_0(M,M)$ and $\beta: X_0(M,N) \ra X_0(N)$, we can bootstrap from $\beta(P) \in X_0(N)$ to $P \in X_0(M,N)$, with some care: the case 
$\Delta = -4$ behaves exceptionally to all the rest.

\subsection{The CM fibers of $X_1(M,N) \ra X_0(M,N)$ are connected}\label{X_1_sec}

In \cite[Thm. 1.2]{Clark22a}, the first author showed an especially close relationship between points on $X_0(M,N)$ and points on $X_1(M,N)$ for points which do not have CM by $\Delta \in \{-3,-4\}$. In particular, this theorem states that the fiber of the map $X_1(M,N) \rightarrow X_0(M,N)$ is inert over any point which does not have CM by one of these two discriminants.  This has the important consequence that determining the degrees of closed $\Delta$-CM points on $X_0(M,N)$ and on $X_1(M,N)$ are \emph{equivalent problems}.  The following theorem generalizes this result to include points with $-3$ and $-4$-CM.

\begin{thm}\label{X0_to_X1}
Let $M \mid N \in \mathbb{Z}^+$, and suppose that $x \in X_0(M,N)_{/\mathbb{Q}}$ is a $\Delta$-CM point. Let $\pi: X_1(M,N) \rightarrow X_0(M,N)$ denote the natural morphism. 
\begin{enumerate}
\item If $\Delta < -4$ or if $M \geq 2$, then $\pi$ is inert over $x$. 
\item Suppose that $\Delta \in \{-3,-4\}$ and $M = 1$. 
\begin{enumerate}
\item If $x$ is a ramified point of the map $X_0(M,N) \rightarrow X(1)$ or if $N \leq 3$, then $\pi$ is inert over $x$. 
\item Otherwise, i.e., if $N \geq 4$ and $x$ is an elliptic point on $X_0(M,N)$, then we have 
\[ e_{\pi}(x) = \begin{cases} 2 \quad &\text{ if } \Delta = -4 \\ 3 \quad &\text{ if } \Delta = -3 \end{cases} \quad \text{ and } \quad f_{\pi}(x) = \begin{cases} \phi(N)/4 \quad &\text{ if } \Delta = -4 \\ \phi(N)/6 \quad &\text{ if } \Delta = -3 \end{cases} \]
for the ramification index and residual degree of $x$. 
\end{enumerate}
\end{enumerate}
In particular, in all cases we have that the fiber of $\pi$ over $x$ consists of a single point. 
\end{thm}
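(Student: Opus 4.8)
The plan is to realize $\pi \colon X_1(M,N) \ra X_0(M,N)$ as a branched abelian cover defined over $\Q$. Over $\overline{\Q}$ the diamond operators $\langle d\rangle$, $d \in (\Z/N\Z)^\times$, act on $X_1(M,N)$ over $X_0(M,N)$ with $\langle -1\rangle$ acting trivially (as $(E,P)\cong(E,-P)$ via $[-1]$), exhibiting $X_0(M,N)$ as the quotient by $G \coloneqq (\Z/N\Z)^\times/\{\pm 1\}$; these deck transformations are $\Q$-rational. Thus for a closed point $x$, comparing the action of $\Gal(\overline{\Q(x)}/\Q(x))$ on the geometric fiber $\pi^{-1}(x)$ with the simply transitive action of $G/I_x$ on it yields a homomorphism $\rho_x \colon \Gal(\overline{\Q(x)}/\Q(x)) \ra G/I_x$, independent of choices because $G$ is abelian, where $I_x \leq G$ is the image of $\Aut(E,\text{level structure})$ for a representative $(E,\text{level structure})$ of $x$. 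The fiber $\pi^{-1}(x)$ is a single closed point exactly when $\rho_x$ is surjective, in which case $e_{\pi}(x)=|I_x|$ and $f_{\pi}(x)=|G|/|I_x|=(\phi(N)/2)/|I_x|$. Now $I_x$ is trivial unless $x$ is an elliptic point of $X_0(M,N)$ --- forcing $j(E)\in\{0,1728\}$, $\ff=1$, and the extra automorphisms to preserve the level structure --- and, since $X_1(M,N)$ has no elliptic points once $N\geq 4$, in that case $|I_x|=2$ for $\Delta=-4$ and $|I_x|=3$ for $\Delta=-3$. Hence the theorem is equivalent to the surjectivity of $\rho_x$ in every case, with the stated values of $e_\pi$ and $f_\pi$ falling out.

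For part (1) with $\Delta<-4$ the fiber contains no elliptic points, $I_x=1$, and the surjectivity $\rho_x \colon \Gal(\overline{\Q(x)}/\Q(x)) \twoheadrightarrow (\Z/N\Z)^\times/\{\pm 1\}$ is exactly \cite[Thm.\ 1.2]{Clark22a}. For part (1) with $M\geq 2$ and $\Delta\in\{-3,-4\}$, one first checks that there are no $\Delta$-CM elliptic points on $X_0(M,N)$: the extra automorphism cannot simultaneously fix the $\Gamma_1(M)$-part and stabilize the cyclic $N$-subgroup. When $\Delta=-3$ this is immediate, as $\det(\zeta_6-1)=N_{K/\Q}(\zeta_6-1)=1$, so $\zeta_6$ fixes no nonzero torsion point; when $\Delta=-4$, $\det(i-1)=2$, so $i$ fixes no point of order $M$ for $M\geq 3$, while for $M=2$ (so $2\mid N$) the unique $i$-fixed point of order $2$ together with the $2$-torsion of the cyclic $N$-subgroup would span $E[2]$, forcing $i$ to carry that $2$-torsion off the subgroup. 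So again $I_x=1$, and we must prove $\rho_x$ is onto $(\Z/N\Z)^\times/\{\pm 1\}$. Together with the ``not an elliptic point'' and ``$N\leq 3$'' sub-cases of part (2)(a) --- where again $I_x=1$, and where the case $N\leq 3$ is trivial since then $\deg\pi\leq 1$ --- this reduces to an $\ff=1$ analogue of the reality computation of \cite{Clark22a}: identifying $\Q(x)$ and the Galois action on $C$ from the structure of the $\ell$-isogeny graph for $\ff=1$ (no longer a volcano, cf.\ Step 1a) together with the action of complex conjugation introduced in Step 1b.

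For part (2)(b) --- $\Delta\in\{-3,-4\}$, $M=1$, $N\geq 4$, $x$ an elliptic point --- we have $I_x=\langle\bar\zeta\rangle$ of order $n\in\{2,3\}$. Choosing a model of $(E,C)$ over $\Q(x)$, $\rho_x$ is the reduction modulo $\langle\zeta\rangle$ of the character $\chi_x$ giving the Galois action on $C$, so surjectivity says $\mathrm{im}(\chi_x)\cdot\langle\zeta\rangle=(\Z/N\Z)^\times$, where $\langle\zeta\rangle\leq(\Z/N\Z)^\times$ is the image of $\Aut(E)$ acting on $C$. Here we use that $x$ being an elliptic point means $C$ is $\OO_K$-stable, and that $\ff=1$ forces $E[N]$ to be free of rank one over $\OO_K/N\OO_K$: over $\Q(x)=K$ the image of the CM Galois representation is (for a suitable model) the full Cartan $(\OO_K/N\OO_K)^\times$, which preserves every $\OO_K$-stable cyclic subgroup, so $\mathrm{im}(\chi_x)$ is the full group of automorphisms of $C$, i.e.\ all of $(\Z/N\Z)^\times$; in the remaining sub-cases (for instance when $\OO_K$ ramifies at a prime dividing $N$) one checks directly that $\mathrm{im}(\chi_x)\langle\zeta\rangle$ still fills $(\Z/N\Z)^\times$, the ``missing'' coset being supplied by $\langle\zeta\rangle$. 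With $\rho_x$ surjective we get $e_\pi(x)=n$ and $f_\pi(x)=\phi(N)/(2n)$, and then $\sum_i e_i f_i=\deg\pi=\phi(N)/2$ forces the fiber to be a single point.

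I expect the main obstacle to lie not in (2)(b) but in the $\ff=1$ reality analysis underlying part (1) for $M\geq 2$ and part (2)(a): since there is no canonical action of complex conjugation on the $\ell$-isogeny graph when $\ff=1$ (cf.\ Step 1b), surjectivity of $\rho_x$ onto $(\Z/N\Z)^\times/\{\pm 1\}$ at composite level must be assembled from the local $\ell$-adic data and the global reality constraints, and --- because the scheme-theoretic fiber-product description of composite-level fibers fails here (Step 2a) --- it cannot be reduced cleanly to prime powers. Along the way one must also confront the model-dependence of the relevant torsion field isolated in Step 3, and, as elsewhere in the paper, the discriminant $\Delta=-4$ will likely require separate attention. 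Once $\rho_x$ is known to be surjective in every case, the remaining assertions follow from the cover-theoretic bookkeeping of the first paragraph.
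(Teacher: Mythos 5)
Your framework (the $(\Z/N\Z)^\times/\{\pm 1\}$-Galois cover, $e_\pi(x)=|I_x|$ from the absence of elliptic points on $X_1(N)$ for $N\ge 4$, and the reduction of everything to surjectivity of $\rho_x$) is the same mechanism the paper uses, and your treatment of case (2)(b) lands on the paper's argument: there the needed input is precisely the surjectivity of the reduced representation $\gg_{\Q(x)}\ra(\Z_K/N\Z_K)^\times/q_N(\Z_K^\times)$ from \cite[\S 1.3]{BCI}, with the image of $\mu_{w_K}$ supplying the rest. (Your parenthetical claim that a suitable model has \emph{full} Cartan image over $K$ is not right in general --- the correct statement is surjectivity modulo units, and what you call the ``remaining sub-cases'' is in fact the generic situation --- but since you only need $\mathrm{im}(\chi_x)\cdot\langle\zeta\rangle=(\Z/N\Z)^\times$, this is a harmless imprecision.)

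The genuine gap is in part (1) with $M\ge 2$ and in part (2)(a): you leave the surjectivity of $\rho_x$ unproven there, describing it as ``the main obstacle,'' requiring a new $\ff=1$ reality analysis at composite level that ``cannot be reduced cleanly to prime powers.'' This misidentifies where the work lies. The reality/isogeny-graph analysis is needed to compute the fibers of $X_0(M,N)\ra X(1)$, i.e.\ the residue field $\Q(x)$ itself; it plays no role in the fiber of $X_1(M,N)\ra X_0(M,N)$ over $x$. The paper's point is that once $x$ is known to be non-elliptic, a pair $(E,C)_{/\Q(x)}$ inducing $x$ is well-defined up to \emph{quadratic} twist even though $\Delta\in\{-3,-4\}$, so the reduced mod-$N$ isogeny character $\gg_{\Q(x)}\ra(\Z/N\Z)^\times/\{\pm 1\}$ is canonically defined and the argument of \cite[Thm.\ 1.2]{Clark22a} applies verbatim; its surjectivity is again the already-known \cite{BCI} result (surjective even after restriction to the ring class field), with no new composite-level or reality input. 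So the step you flag as hard is in fact immediate, but as written your proposal does not supply it. A secondary, fixable point: your argument that $X_0(M,N)$ has no elliptic $\Delta$-CM points for $M\ge 2$ is phrased in terms of $\zeta$ fixing a point of order $M$, whereas the level structure is ``scalar mod $M$, Borel mod $N$''; the clean statement is that $\zeta_{w_K}\equiv\pm a\pmod{M\Z_K}$ forces $\zeta_{w_K}\in\Z+M\Z_K$, hence $M=1$. The paper instead deduces non-ellipticity from its path analysis on $\mathcal{G}_{K,\ell,1}$; either route works once stated correctly.
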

\begin{proof}
For the proof, we first recall some basic relevant facts: for $N \leq 2$ the map $\pi$ is an isomorphism. For $N \geq 3$ it is a $(\mathbb{Z}/N\mathbb{Z})^*/\{\pm 1\}$-Galois covering, hence has degree $\phi(N)/2$. All points on $X(N) = X_1(N,N)$ not above $0, 1728 \in X(1)$ are unramified. For $N \geq 4$ the curve $X_1(N)$ (and hence $X(N)$) has no elliptic points of periods $2$ or $3$, from which it follows that all points over $0, 1728 \in X(1)$ are ramified with ramification index $2$ or $3$. The curve $X_1(2)$ has a single elliptic point of period $2$ over $1728 \in X(1)$, while the curve $X_1(3)$ has a single elliptic point of period $3$ over $0 \in X(1)$. (One can see these claims regarding elliptic points and ramification from elementary arguments involving congruence subgroups, in fact this is \cite[Exc. 2.3.7]{DS05}). 

For $\Delta < -4$ the claim is \cite[Thm 1.2]{Clark22a}, so suppose that $\Delta \in \{-3,-4\}$. For $M \geq 2$, the point $x$ must be non-elliptic (i.e., is a ramified point of the map $X_0(M,N) \rightarrow X(1)$). We can see this, for instance, via our analysis of paths on $\mathcal{G}_{K,\ell,1}$ for any prime $\ell \mid M$; in all cases we find that any pair of independent $\ell^{a'}$ isogenies for $a' \geq 1$ must include at least one with a corresponding path in $\mathcal{G}_{K,\ell,1}$ which descends, and hence any $-3$ or $-4$-CM point on $X_0(\ell^{a'},\ell^{a})$, and hence on $X_0(M,N)$ for $M \geq 2$, must be non-elliptic. In this case we then have that a pair $(E,C)_{/\mathbb{Q}(x)}$ inducing $x$ is well-defined up to quadratic twist, as all models for $E$ are defined over $\mathbb{Q}(x)$. For this reason, the same argument involving the modulo $N$ $\pm$-Galois representation given in \cite[Thm 1.2]{Clark22a} applies. Similarly, this argument applies in case $(2)(a)$ if $x$ is a ramified point of the map $X_0(M,N) \rightarrow X(1)$. 

We now assume that $x$ is an elliptic $\Delta$-CM point on $X_0(M,N)$ with $\Delta \in \{-3,-4\}$. If $N = 2$, then $\pi$ is an isomorphism, so the claim is trivial. If $N=3$, then because there is a single elliptic point on $X_1(3)$ it follows that it must comprise the entire fiber above $x$, giving the inertness claim. Assuming now that $N \geq 4$, we know that $x$ is elliptic while every point in $\pi^{-1}(x)$ is ramified with respect to the map $X_1(N) \rightarrow X(1)$, giving the claimed ramification index. Note that it follows that the residual degree is at most the claimed residual degree in each case. 

To provide the lower bound on the residual degree, we need only modify the argument of the $\Delta < -4$ case slightly in a predictable way. If $\Delta = -4$, then a pair $(E,C)_{\mathbb{Q}(x)}$ inducing $x$ is well-defined up to quartic twist. Letting $q_N : \Z_K \rightarrow \Z_K/N\Z_K$ denote the quotient map, by tracking that action of Galois on a generator $P$ of $C$ we get a well-defined reduced mod $N$ Galois representation 
\[ \overline{\rho_N} : \gg_{\Q(x)} \rightarrow \left(\Z_K / N\Z_K\right)^\times / q_N(\Z_K^*) \]
which is independent of the chosen model and surjective (see \cite[\S 1.3]{BCI}). As the set $\{P,-P,iP,-iP\}$ is stable under the action of $\gg_{\Q(y)}$ for $y \in \pi^{-1}(x)$, we then must have 
\[ \frac{\phi(N)}{4} = \# \left( \overline{\rho_N}\left(\gg_{\Q(x)}\right) \right) \mid [\mathbb{Q}(y) : \mathbb{Q}(x)], \]
giving the result for $\Delta = -4$. For $\Delta = -3$, exchanging ``quartic'' for ``cubic'' and $\mu_4$ for $\mu_3$ results in the required divisibility $\frac{\phi(N)}{6} \mid f_\pi(x)$. 

\end{proof}

\begin{remark}
The $M=1$ with $\Delta < -4$ case of Theorem \ref{X0_to_X1} is used explicitly in \cite{CGPS22} to transfer from knowledge of the \emph{least} degree of a $\Delta$-CM point on $X_1(N)$, which is computed in \cite{BCII}, to knowledge of the least degree of a $\Delta$-CM point on $X_0(N)$. A shadow of Theorem \ref{X0_to_X1} is also seen in the referenced study in the $\Delta \in \{-3,-4\}$ case. A positive integer $N$ is of Type I or Type II, using the terminology of \cite{CGPS22}, if $X_0(N)$ has an elliptic point of order $3$ or $2$, respectively. If $N$ is of type I, then there is a single primitive degree among all elliptic points on $X_0(N)$ which is the least degree of a $-4$-CM point on $X_0(N)$. In this case, the single point lying above any elliptic $-4$-CM point on $X_1(N)$ provides the least degree of a $-4$-CM point on $X_1(N)$ (and the analogous statements hold for Type II and $\Delta = -3$). 
\end{remark}


\section{Composita of Ring Class Fields and of Rational Ring Class Fields}
\noindent
Let $K$ be an imaginary quadratic field, of discriminant $\Delta_K$.  We put 
\[ w_K \coloneqq \# \Z_K^{\times} = \begin{cases} 6 & \text{ if }  \Delta_K = -3 \\ 4 &  \text{ if }  \Delta_K = -4 \\ 2 &  \text{ if }  \Delta_K < -4 \end{cases}. \]
Let $\OO$ be a $\Z$-order in $K$.  For $\ff \in \Z^+$, there is a unique $\Z$-order $\OO$ in $K$ with $[\Z_K:\OO] = \ff$ and then $\ff \Z_K$ is the conductor ideal $(\OO:\Z_K)$ \cite[\S 2.1]{Clark22a}.
We denote by $K(\ff)$ the \textbf{ring class field} of $\OO$ \cite[\S 2.3]{Clark22a}.  If $j_{\Delta} \coloneqq j(\C/\OO)$, then we have 
\[ K(\ff) = K(j_{\Delta}). \]
We recall from \cite[Cor. 7.24]{Cox89} the formula

\begin{equation}
\label{QUADCLASS}
\mathfrak{d}(\ff) \coloneqq [K(\ff):K(1)] = \begin{cases} 1 &  \text{ if }  \ff = 1 \\
\frac{2}{\# \Z_K^{\times}} \ff \prod_{\ell \mid \ff} \left(1 - \left( \frac{\Delta_K}{\ell} \right) \frac{1}{\ell} \right) &  \text{ if }  \ff \geq 2
\end{cases}.
\end{equation}
As in \cite[\S 2.6]{Clark22a}, we also define the \textbf{rational ring class field}
\[ \Q(\ff) \coloneqq \Q(j_{\Delta}). \]
In \cite[\S 2]{Clark22a} we studied composita of ring class fields and of rational ring class fields (with a fixed imaginary quadratic field 
$K$, in both cases) when $\Delta_K< -4$.  The results were quite clean: for $\ff_1,\ff_2 \in \Z^+$, the fields 
$K(\ff_1)$ and $K(\ff_2)$ are linearly disjoint over $K(\gcd(\ff_1,\ff_2))$ and we have $K(\ff_1)K(\ff_2) = K(\lcm(\ff_1,\ff_2))$ 
\cite[Prop. 2.2]{Clark22a} and the same holds with each $K(\ff_i)$ replaced by $\Q(\ff_i)$ \cite[Prop. 2.10a)]{Clark22a}.
\\ \\
Here we treat $\Delta_K \in \{-3,-4\}$.

\begin{prop}
\label{TEDIOUSALGEBRAPROP1}
\label{SAIAPROP1}
Let $K$ be a quadratic field with $\Delta_K \in \{-3,-4\}$, let $\ff_1,\ff_2 \in \Z^+$, and put 
\[ m \coloneqq \operatorname{gcd}(\ff_1,\ff_2), \ M \ \coloneqq \operatorname{lcm}(\ff_1,\ff_2). \]
\begin{itemize}
\item[a)] Suppose that $m > 1$.  Then:
\[ K(\ff_1)K(\ff_2) = K(M).\]
\item[b)] If the order of discriminant $\ff_1^2 \Delta_K$ has class number $1$, then we have 
\[ K(\ff_1)K(\ff_2) = K(\ff_2). \]
\item[c)] Let $\ff_1,\ldots,\ff_r \in \Z^+$ be pairwise relatively prime, and further assume that: \\
$\bullet$ If $\Delta_K = -3$, then no $\ff_i$ lies in $\{1,2,3\}$; and \\
$\bullet$ If $\Delta_K = -4$, then no $\ff_i$ lies in $\{1,2\}$.  \\
Then: 
\[ [K(\ff_1 \cdots \ff_r): K(\ff_1) \cdots K(\ff_r)] = \left(\frac{w_K}{2}\right)^{r-1}. \]
%
\item[d)] In all cases we have that $K(\ff_1)$ and $K(\ff_2)$ are linearly disjoint over $K(m)$, and thus $K(\ff_1) \cap K(\ff_2) = K(m)$.
\end{itemize}
\end{prop}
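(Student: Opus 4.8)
The plan is to deduce all four parts from the class-field-theoretic description of ring class fields together with the degree formula \eqref{QUADCLASS}, the crux being part (d), which I would prove first. Since $\Delta_K\in\{-3,-4\}$ forces $\Z_K$ to have class number $1$, for every $\ff\in\Z^+$ the Artin map gives a canonical isomorphism $\Gal(K(\ff)/K)\cong\Pic\OO(\ff^2\Delta_K)\cong(\Z_K/\ff\Z_K)^\times/B_\ff$, where $B_\ff\coloneqq\langle\,\overline{\Z_K^\times},\ \overline{(\Z/\ff\Z)^\times}\,\rangle$ is generated by the images of $\Z_K^\times$ and of $(\Z/\ff\Z)^\times$ in $(\Z_K/\ff\Z_K)^\times$; moreover, for $\ff'\mid\ff$ the restriction $\Gal(K(\ff)/K)\to\Gal(K(\ff')/K)$ is induced by the reduction $(\Z_K/\ff\Z_K)^\times\to(\Z_K/\ff'\Z_K)^\times$ (see \cite[\S 7]{Cox89} and \cite[\S 2]{Clark22a}; \eqref{QUADCLASS} records the resulting order $|\Pic\OO(\ff^2\Delta_K)|=\dd(\ff)\coloneqq[K(\ff):K]$).

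For (d), set $G\coloneqq\Gal(K(M)/K)\cong(\Z_K/M\Z_K)^\times/B_M$, and for $d\in\{\ff_1,\ff_2,m\}$ let $\pi_d\colon(\Z_K/M\Z_K)^\times\to(\Z_K/d\Z_K)^\times$ be the reduction map; it is surjective and satisfies $\pi_d(B_M)=B_d$. Then $H_d\coloneqq\Gal(K(M)/K(d))\le G$ is the image in $G$ of $\pi_d^{-1}(B_d)=(\ker\pi_d)\cdot B_M$, so $H_1H_2$ is the image of $(\ker\pi_1)(\ker\pi_2)\cdot B_M$ and it suffices to prove $(\ker\pi_1)(\ker\pi_2)=\ker\pi_m$. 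Under the CRT isomorphism $(\Z_K/M\Z_K)^\times\cong\prod_{p\mid M}(\Z_K/p^{v_p(M)}\Z_K)^\times$, each $\ker\pi_d$ is the direct product over $p\mid M$ of the local kernels $\{x\equiv 1\bmod p^{v_p(d)}\}$; fixing a prime $p\mid M$, since $M=\lcm(\ff_1,\ff_2)$ we have $v_p(M)=\max(v_p(\ff_1),v_p(\ff_2))$, so one of the local kernels for $d=\ff_1,\ff_2$ is trivial and their product equals the other, which is exactly the local kernel for $d=m$. Multiplying over $p$ gives $(\ker\pi_1)(\ker\pi_2)=\ker\pi_m$, hence $H_1H_2=H_m$, so $K(\ff_1)\cap K(\ff_2)=K(M)^{\langle H_1,H_2\rangle}=K(M)^{H_m}=K(m)$; and since $K(\ff_1)/K(m)$ is Galois, $K(\ff_1)$ and $K(\ff_2)$ are linearly disjoint over $K(m)$.

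Parts (a)--(c) then follow by counting degrees with \eqref{QUADCLASS}. For $n\ge 2$ write $\dd(n)=\tfrac{2}{w_K}g(n)$ with $g(n)\coloneqq n\prod_{\ell\mid n}\bigl(1-\bigl(\tfrac{\Delta_K}{\ell}\bigr)\tfrac1\ell\bigr)$, which is multiplicative; for every prime $p$ the multiset $\{v_p(M),v_p(m)\}$ equals $\{v_p(\ff_1),v_p(\ff_2)\}$, so $g(M)g(m)=g(\ff_1)g(\ff_2)$. For (a): by (d), $[K(\ff_1)K(\ff_2):K]=\dd(\ff_1)\dd(\ff_2)/\dd(m)$, and when $m>1$ all of $m,\ff_1,\ff_2,M$ are $\ge 2$, so this equals $\tfrac{2}{w_K}g(M)=\dd(M)=[K(M):K]$; since $K(\ff_1)K(\ff_2)\subseteq K(M)$, equality holds. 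For (b): the hypothesis gives $\dd(\ff_1)=|\Pic\OO(\ff_1^2\Delta_K)|=1$ (using again that $\Z_K$ has class number $1$), i.e.\ $K(\ff_1)=K$, whence $K(\ff_1)K(\ff_2)=K(\ff_2)$. For (c), put $N\coloneqq\ff_1\cdots\ff_r$; pairwise coprimality gives $(\Z_K/N\Z_K)^\times\cong\prod_i(\Z_K/\ff_i\Z_K)^\times$, under which $\Gal(K(N)/K)\to\prod_i\Gal(K(\ff_i)/K)$ is induced by the product of the quotient maps modulo the $B_{\ff_i}$, so its kernel $\Gal(K(N)/K(\ff_1)\cdots K(\ff_r))$ is $\bigl(\prod_iB_{\ff_i}\bigr)/B_N$; hence $[K(N):K(\ff_1)\cdots K(\ff_r)]=\prod_i|B_{\ff_i}|/|B_N|=\dd(N)/\prod_i\dd(\ff_i)$ (using CRT for the orders and $|\Pic\OO(\ff^2\Delta_K)|=\dd(\ff)$), which by \eqref{QUADCLASS} and multiplicativity of $g$ — each $\ff_i\ge 2$ and $N\ge 2$ — equals $\tfrac{2}{w_K}g(N)\big/\bigl(\tfrac{2}{w_K}\bigr)^{r}g(N)=(w_K/2)^{r-1}$.

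The main obstacle is (d), and within it the passage from the Galois-theoretic claim to the concrete group-theoretic one: one must correctly identify $H_d$ inside $G$ under the $(\Z_K/\cdot)^\times$-description and then carry out the prime-by-prime computation forcing $(\ker\pi_1)(\ker\pi_2)=\ker\pi_m$ — it is exactly here that $M=\lcm(\ff_1,\ff_2)$ enters in an essential way. The rest is bookkeeping with \eqref{QUADCLASS}, where the only point to note is that the hypotheses are precisely what keep the relevant $\dd$-values in the form $\tfrac{2}{w_K}g(\cdot)$: in (a), $m>1$ is essential (otherwise $\dd(m)=1$ and $K(\ff_1)K(\ff_2)$ is a proper subfield of $K(M)$), while in (c) the stated exclusions are used only to guarantee each $\ff_i\ge 2$.
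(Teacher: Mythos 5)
Your proof is correct, but it reaches the two substantive parts, a) and d), by a genuinely different route than the paper. The paper proves a) first, directly at the level of ideal groups: it takes $(\alpha)\in P_{K,\Z}(\ff_1)\cap P_{K,\Z}(\ff_2)$, normalizes the generator modulo $\ff_1\Z_K$, and rules out a unit discrepancy $u\in\{\pm i\}$ (resp.\ $\pm\omega,\pm\overline{\omega}$) by noting it would force a ratio of rational residues to be congruent to $i$ (resp.\ $\omega$) modulo $m\Z_K$, impossible once $m>1$; it then deduces d) for $m>1$ from a) together with the identity $\mathfrak{d}(m)\mathfrak{d}(M)=\mathfrak{d}(\ff_1)\mathfrak{d}(\ff_2)$, handling $m=1$ separately by a conductor argument. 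You invert the logic: you prove d) first, uniformly in $m$, via the presentation $\Gal(K(\ff)/K)\cong(\Z_K/\ff\Z_K)^{\times}/B_{\ff}$ --- which is where the hypothesis $h_K=1$, i.e.\ $\Delta_K\in\{-3,-4\}$, enters your argument in place of the explicit unit computation --- plus a CRT verification that $(\ker\pi_{\ff_1})(\ker\pi_{\ff_2})=\ker\pi_m$; part a) then follows from d) by the same degree identity the paper uses for d). For c) the paper first establishes mutual linear disjointness over $K(1)$ by a conductor argument and then divides degrees, whereas you compute the kernel $(\prod_i B_{\ff_i})/B_N$ directly, which sidesteps that step and makes transparent your (accurate) closing remark that the computation only needs each $\ff_i\geq 2$, so the paper's exclusions in c) are stronger than the formula requires (consistent with the footnote to part b): for $\Delta_K=-3$ one still has $[K(6):K(2)K(3)]=3$). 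Your approach buys a cleaner, case-free treatment of d); the paper's buys an explicit picture of which units obstruct $P_{K,\Z}(\ff_1)\cap P_{K,\Z}(\ff_2)=P_{K,\Z}(M)$ when $m=1$, which is the phenomenon the rest of the section is organized around.
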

\begin{proof}
We will use the classical description of ring class groups and ring class fields, with notation as in \cite[\S 7]{Cox89}.  For $\ff \in \Z^+$, 
let $I_K(\ff)$ be the group of fractional $\Z_K$-ideals prime to $\ff$ and let $P_{K,\Z}(\ff)$ be the subgroup of principal fractional ideals 
generated by an element $\alpha \in \Z_K$ such that $\alpha \equiv a \pmod{\ff \Z_K}$ for some $a \in \Z$ with $\gcd(a,\ff) = 1$.  
By class field theory, we have $K(\ff_1)K(\ff_2) = K(M)$ if and only if 
\[ P_{K,\Z}(\ff_1) \cap P_{K,\Z}(\ff_2) = P_{K,\Z}(M). \]
Clearly in all cases we have 
\[ P_{K,\Z}(\ff_1) \cap P_{K,\Z}(\ff_2) \supseteq P_{K,\Z}(M). \]
a) 
$\bullet$ Suppose $\Delta_K = -4$ and $m  > 1$, so the units of $\Z_K$ are $\pm 1, \pm \sqrt{-1}$.   Let $(\alpha) \in P_{K,\Z}(\ff_1) \cap 
P_{K,\Z}(\ff_2)$.  We may choose $\alpha$ such that 
\[ \alpha \equiv a_{\ff_1} \pmod{ \ff_1 \Z_K} \]
and then there is $u \in \Z_K^{\times}$ such that 
\[ u \alpha \equiv a_{\ff_2} \pmod{\ff_2 \Z_K}. \]
If $u \in \{ \pm 1\}$, then the argument of Case 1 works to show that $(\alpha) \in P_{K,\Z}(M)$.  After replacing $\alpha$ with $-\alpha$ 
if necessary, the other case to consider is that 
\[ \sqrt{-1} \alpha \equiv a_{\ff_2} \pmod{\ff_2 \Z_K}. \]
If this holds then
\[ \frac{a_{\ff_2}}{a_{\ff_1}} \equiv i \pmod{m \Z_K}, \]
which is manifestly false. \\ 
$\bullet$ Suppose $\Delta_K = -3$ and $m > 1$, so the units of $\Z_K$ are $\pm 1, \pm \omega$, $\pm \overline{\omega}$, 
where $\omega = \frac{1+\sqrt{-3}}{2}$.  As above, we may suppose that $\alpha \equiv a_{\ff_1} \pmod{\ff_1 \Z_K}$ and 
$\alpha$ is congruent modulo $\ff_2 \Z_K$ to either $\omega a_{\ff_2}$ or to $\overline{\omega} a_{\ff_2}$.  We then get 
\[ \frac{a_{\ff_2}}{a_{\ff_1}} \equiv \omega \text{ or } \overline{\omega} \pmod{m \Z_K}, \]
which is again manifestly false.
\\
b) This is a trivial case, listed for completeness: if the order of discriminant $\ff_1^2 \Delta_K$ has class number $1$ 
then $K(\ff_1) = K(1)$ (and conversely), so $K(\ff_1) K(\ff_2) = K(1) K(\ff_2) = K(\ff_2)$.\footnote{This gives rise to cases in  
in which $K(M) \supsetneq K(\ff_1)K(\ff_2)$: e.g. when $\Delta_K = -3$ we have $K(2)K(3) = K(1)$ but $[K(6):K(1)] = 3$.}
\\
c) We claim that the extensions $K(\ff_1),\ldots,K(\ff_r)$ are mutually linearly disjoint over $K(1)$: that is, \[K(\ff_1) \otimes_{K(1)} \cdots 
\otimes_{K(1)} K(\ff_r) = K(\ff_1) \cdots K(\ff_r). \]
Since everything in sight is Galois, it is enough to check that $(K(\ff_1) \cdots K(\ff_{r-1})) \cap K(\ff_r) = K(1)$.  
But the conductor of $K(\ff_1) \cdots K(\ff_{r-1})$ divides $\ff_1 \cdots \ff_{r-1}$ and the conductor of $K(\ff_r)$ divides $\ff_r$, 
so the conductor of their intersection is the unit ideal, so the intersection is contained in the Hilbert class field $K(1)$, hence is 
equal to $K(1)$.  From this it follows that 
\[ [K(\ff_1 \cdots \ff_r):K(\ff_1) \cdots K(\ff_r)] = \frac{\delta(\ff_1 \cdots \ff_r)}{\prod_{i=1}^r \delta(\ff_i)}, \]
and the latter expression may be evaluated using (\ref{QUADCLASS}).
\\
d) It is immediate that $K(m) \subseteq K(\ff_1) \cap K(\ff_2)$.  \\ \indent
The case $m = 1$ is easy: then $K(\ff_1) \cap K(\ff_2)$ has conductor dividing $\ff_1$ and $\ff_2$, so its conductor is the unit 
ideal, so $K(\ff_1) \cap K(\ff_2)$ is contained in the Hilbert class field of $K$, which is the ring class field $K(1)$.  \\ \indent
Henceforth we suppose that $m > 1$, and thus by part a) we have $K(\ff_1)K(\ff_2) = K(M)$.  We claim the formula 
\[ \mathfrak{d}(m) \mathfrak{d}(M) = \mathfrak{d}(\ff_1) \mathfrak{d}(\ff_2). \]
First we observe that this formula $g(m)g(M) = g(\ff_1) g(\ff_2)$ holds for any multiplicative function $g: \Z^+ \ra \C$.  
If we had $\Delta_K < -4$ then the function $\mathfrak{d}$ would be multiplicative.  Instead we have $\Delta_K \in \{-3,-4\}$, in which case $\mathfrak{d}$ is a constant multiple of a multiplicative function \emph{except for its value at $1$}.  This justifies the claim.  The claim can be rewritten as 
\[ [K(\ff_1)K(\ff_2):K(m)] = [K(M):K(m)] = [K(\ff_1):K(m)][K(\ff_2):K(m)], \]
so $K(\ff_1)$ and $K(\ff_2)$ are linearly disjoint over $K(m)$, and thus $K(m) = K(\ff_1) \cap K(\ff_2)$.  
\end{proof}

\begin{prop}
\label{SAIAPROP2}
Let $K$ be a quadratic field with $\Delta_K \in \{-3,-4\}$.  Let $\ff_1,\ff_2 \in \Z^+$, and put $m = \gcd(\ff_1,\ff_2)$, $M = \lcm(\ff_1\ff_2)$.  Let
\[ \mathcal{D} \coloneqq \{-3,-4,-12,-16,-27\}; \]
this is the set of imaginary quadratic disciminants $\Delta = \ff^2 \Delta_K$ with fundamental discriminant $\Delta_K \in \{-3,-4\}$ 
and class number $1$.  Let
\[ S \coloneqq \{ \ff \in \Z^+ \mid \ff^2 \Delta_K \in \mathcal{D} \}. \]  
\begin{itemize}
\item[a)] The fields $\Q(\ff_1)$ and $\Q(\ff_2)$ are linearly disjoint over $\Q(m)$: \[\Q(\ff_1) \otimes_{\Q(m)} \Q(\ff_2) \cong \Q(\ff_1)\Q(\ff_2). \]
\item[b)] If $\ff_1 \in S$, then we have: 
\[\Q(\ff_1) \otimes_{\Q(m)} \Q(\ff_2) \cong \Q(\ff_2), \]
\[ \Q(\ff_1) \otimes_{\Q(m)} K(\ff_2) \cong K(\ff_1) \otimes_{\Q(m)} \Q(\ff_2) \cong K(\ff_2), \]
and 
\[ K(\ff_1) \otimes_{\Q(m)} K(\ff_2) \cong K(\ff_2) \times K(\ff_2). \]
\item[c)] If $\ff_1,\ff_2 \notin S$ and $m > 1$, then we have 
\[ \Q(\ff_1) \otimes_{\Q(m)} \Q(\ff_2) \cong \Q(\ff_1) \Q(\ff_2) =  \Q(M), \]
 \[ \Q(\ff_1) \otimes_{\Q(m)} K(\ff_2) \cong \Q(\ff_2) K(\ff_2) = K(M), \]
and 
\[ K(\ff_1) \otimes_{\Q(m)} K(\ff_2) \cong K(M) \times K(M). \]
\item[d)] Let $\ff_1,\ldots,\ff_r$ be elements of $\Z^+ \setminus S$ that are pairwise relatively prime.  
Then $\Q(\ff_1) \cdots \Q(\ff_r)$ is a subfield of $\Q(\ff_1 \cdots \ff_r)$ of index $\left(\frac{w_K}{2}\right)^{r-1}$, and moreover 
\[ \Q(\ff_1) \cdots \Q(\ff_r) \cong \Q(\ff_1) \otimes_{\Q(m)} \cdots \otimes_{\Q(m)} \Q(\ff_r), \]
\[ \Q(\ff_1) \otimes_{\Q(m)} \cdots \Q(\ff_{r-1}) \otimes_{\Q(m)} K(\ff_r) \cong \Q(\ff_1) \cdots \Q(\ff_{r-1}) K(\ff_r).\]
Finally, if $2 \leq s \leq r$, then 
\[ \Q(\ff_1) \otimes_{\Q(m)} \dots \otimes_{\Q(m)} \Q(\ff_{r-s}) \otimes_{\Q(m)} K(\ff_{r-s+1}) \otimes_{\Q(\ff)} \cdots 
\otimes_{\Q(\ff)} K(\ff_r) \cong \left(K(\ff_1) \cdots K(\ff_r)\right)^{2^{s-1}}. \]
\end{itemize}
\end{prop}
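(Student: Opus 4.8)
The plan is to deduce everything from Proposition~\ref{SAIAPROP1} together with one structural input, which I would record first as a preliminary lemma. For every $\ff\in\Z^+$ the $j$-invariant $j_{\ff^2\Delta_K}$ is real (as $\C/\OO(\ff^2\Delta_K)$ is isomorphic to its complex conjugate), so $\Q(\ff)\subseteq\R$; moreover $K(\ff)=K\cdot\Q(\ff)$ and $K(\ff)/\Q$ is Galois (classical; see \cite{Cox89}). Since $\Q(\ff)\subseteq\R$ while $K\not\subseteq\R$ we have $K\not\subseteq\Q(\ff)$, hence $K\cap\Q(\ff)=\Q$ (as $[K:\Q]$ is prime), so $[K(\ff):\Q(\ff)]=2$; and since complex conjugation is a nontrivial automorphism of $K(\ff)/\Q$, it follows that $\Q(\ff)=K(\ff)\cap\R$ and (using $K(1)=K$) that $[\Q(\ff):\Q]=\mathfrak{d}(\ff)$. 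Combined with the monotonicity $\ff\mid\ff'\Rightarrow K(\ff)\subseteq K(\ff')$ (which gives $\Q(\ff)\subseteq K(\ff')\cap\R=\Q(\ff')$) this yields $\Q(m)\subseteq\Q(\ff_1)\cap\Q(\ff_2)$, so every tensor product in the statement is defined. Finally $S$ is downward closed ($S=\{1,2,3\}$ if $\Delta_K=-3$, $S=\{1,2\}$ if $\Delta_K=-4$), so $\ff_1\in S$ forces $m\in S$ and $\Q(\ff_1)=\Q(m)=\Q$, $K(\ff_1)=K$; part~(b) is then immediate from $\Q\otimes_\Q X\cong X$, from $K\otimes_\Q\Q(\ff_2)\cong K(\ff_2)$ (a field, since $K/\Q$ is Galois and $K\cap\Q(\ff_2)=\Q$), and from $K\otimes_\Q K(\ff_2)\cong K(\ff_2)\times K(\ff_2)$.

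The core of the argument is a ``sandwich'' estimate for degrees. For any $\ff_1,\ff_2\in\Z^+$ one has the trivial upper bound $[\Q(\ff_1)\Q(\ff_2):\Q]\leq[\Q(\ff_1):\Q(m)]\cdot[\Q(\ff_2):\Q]=\mathfrak{d}(\ff_1)\mathfrak{d}(\ff_2)/\mathfrak{d}(m)$, while $K(\ff_1)K(\ff_2)=K\cdot\Q(\ff_1)\Q(\ff_2)$ forces $[K(\ff_1)K(\ff_2):\Q]\leq 2[\Q(\ff_1)\Q(\ff_2):\Q]$; since Proposition~\ref{SAIAPROP1}(d) (linear disjointness of $K(\ff_1)$ and $K(\ff_2)$ over $K(m)$, valid for all $m$) gives $[K(\ff_1)K(\ff_2):\Q]=2\mathfrak{d}(\ff_1)\mathfrak{d}(\ff_2)/\mathfrak{d}(m)$, both inequalities are equalities. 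Hence $\dim_{\Q(m)}\bigl(\Q(\ff_1)\otimes_{\Q(m)}\Q(\ff_2)\bigr)=[\Q(\ff_1)\Q(\ff_2):\Q(m)]$, so the multiplication map is an isomorphism and part~(a) holds (and $\Q(\ff_1)\cap\Q(\ff_2)=\Q(m)$). For part~(c), when $m>1$ Proposition~\ref{SAIAPROP1}(a) gives $K(\ff_1)K(\ff_2)=K(M)$, hence $\mathfrak{d}(M)=\mathfrak{d}(\ff_1)\mathfrak{d}(\ff_2)/\mathfrak{d}(m)$ (here $m>1$ is essential, since $\mathfrak{d}$ is a constant multiple of a multiplicative function away from $\mathfrak{d}(1)$), and since $\Q(\ff_1)\Q(\ff_2)\subseteq\Q(M)$ with matching $\Q$-degrees we conclude $\Q(\ff_1)\Q(\ff_2)=\Q(M)$. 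The remaining displays of~(c) follow from the field isomorphism $K(\ff_i)\cong\Q(\ff_i)\otimes_{\Q(m)}K(m)$ (valid since $K(m)/\Q(m)$ is Galois of degree $2$ and, by reality, $K(m)\cap\Q(\ff_i)=\Q(m)$) together with $K(m)\otimes_{\Q(m)}K(m)\cong K(m)\times K(m)$; e.g.\ $K(\ff_1)\otimes_{\Q(m)}K(\ff_2)\cong\bigl(\Q(\ff_1)\otimes_{\Q(m)}\Q(\ff_2)\bigr)\otimes_{\Q(m)}\bigl(K(m)\otimes_{\Q(m)}K(m)\bigr)\cong\Q(M)\otimes_{\Q(m)}\bigl(K(m)\times K(m)\bigr)\cong K(M)\times K(M)$, the last step using $\Q(M)\otimes_{\Q(m)}K(m)\cong K(M)$, again by reality.

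For part~(d), pairwise coprimality forces $m=1$, so every tensor product there is over $\Q(m)=\Q$. The same sandwich with $r$ factors --- using $[K(\ff_1)\cdots K(\ff_r):K]=\prod_i\mathfrak{d}(\ff_i)$ (mutual linear disjointness over $K(1)$, from the proof of Proposition~\ref{SAIAPROP1}(c)), $K(\ff_1)\cdots K(\ff_r)=K\cdot\Q(\ff_1)\cdots\Q(\ff_r)$, and $[\Q(\ff_1)\cdots\Q(\ff_r):\Q]\leq\prod_i\mathfrak{d}(\ff_i)$ --- shows $[\Q(\ff_1)\cdots\Q(\ff_r):\Q]=\prod_i\mathfrak{d}(\ff_i)=\dim_\Q\bigotimes_i\Q(\ff_i)$, which gives the first displayed isomorphism of~(d); dividing $[\Q(\ff_1\cdots\ff_r):\Q]$ by this and applying Proposition~\ref{SAIAPROP1}(c) yields the index $(w_K/2)^{r-1}$. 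For the mixed displays I would substitute $K(\ff_j)\cong\Q(\ff_j)\otimes_\Q K$ into the left-hand tensor product and collect the $s$ resulting copies of $K$: from $K\otimes_\Q K\cong K\times K$ one gets by induction $K^{\otimes s}\coloneqq K\otimes_\Q\cdots\otimes_\Q K\cong K^{\,2^{s-1}}$, so the product becomes $\bigl(\bigotimes_{i=1}^r\Q(\ff_i)\bigr)\otimes_\Q K^{\otimes s}\cong\bigl(\Q(\ff_1)\cdots\Q(\ff_r)\bigr)\otimes_\Q K^{\,2^{s-1}}\cong\bigl(\Q(\ff_1)\cdots\Q(\ff_r)\cdot K\bigr)^{2^{s-1}}=\bigl(K(\ff_1)\cdots K(\ff_r)\bigr)^{2^{s-1}}$, the middle isomorphism because $\Q(\ff_1)\cdots\Q(\ff_r)\subseteq\R$ and $K/\Q$ is Galois. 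The case $s=1$ is the second displayed isomorphism of~(d), since $\Q(\ff_1)\cdots\Q(\ff_{r-1})K(\ff_r)=K(\ff_1)\cdots K(\ff_r)$.

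I expect the principal difficulty to be organizational rather than conceptual: one must track the degenerate possibilities (some $\ff_i\in S$, or a proper divisor $m$ of an $\ff_i\notin S$ itself lying in $S$), in which cases $\Q(\ff_i)$ or $\Q(m)$ collapses to $\Q$ and several isomorphisms trivialize; and one must invoke $\mathfrak{d}(m)\mathfrak{d}(M)=\mathfrak{d}(\ff_1)\mathfrak{d}(\ff_2)$ only for $m>1$, since it is exactly the anomalous value $\mathfrak{d}(1)$ --- which differs from $2/w_K$ precisely because $w_K>2$ --- that produces the index $(w_K/2)^{r-1}>1$ and hence the whole deviation from the clean $\Delta_K<-4$ picture of \cite{Clark22a}. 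The one genuinely useful idea, which I would isolate at the outset, is the reduction ``adjoin $K$'': doing so at most doubles a degree, while $K(\ff)=K\cdot\Q(\ff)$ attains that maximum --- this is what transports all of Proposition~\ref{SAIAPROP1} to rational ring class fields with no further class field theory, and what makes the exponent $2^{s-1}$ in~(d) a formal consequence of $K\otimes_\Q K\cong K\times K$.
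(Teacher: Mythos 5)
Your proposal is correct and follows essentially the same route as the paper: part (a) from the linear disjointness of $K(\ff_1)$ and $K(\ff_2)$ over $K(m)$ (Proposition \ref{SAIAPROP1}d)), part (c) from a chain of degree equalities via $K(\ff_1)K(\ff_2)=K(M)$, and part (d) from mutual linear disjointness over $\Q(1)=\Q$ together with the index computation of Proposition \ref{SAIAPROP1}c). The only difference is one of exposition: you make explicit, via the degree sandwich and the identification $K(\ff)\cong\Q(\ff)\otimes_{\Q(m)}K(m)$ with $K\otimes_{\Q}K\cong K\times K$, the tensor-product manipulations that the paper dismisses with ``the other statements follow easily.''
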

\begin{proof}
a) As in the proof of \cite[Prop. 2.10]{Clark22a}, this follows from the fact that $K(\ff_1)$ and $K(\ff_2)$ are 
linearly disjoint over $K(m)$.  \\
b) If $\ff_1 \in S$, then $\Q(m) = \Q(\ff_1) = \Q(1)$, and all the statements follow easily. \\
c) Using part a) and Proposition \ref{SAIAPROP1}a), we get
\[ [\Q(M):\Q(m)] = [K(M):K(m)] = [K(\ff_1)K(\ff_2):K(m)] = [K(\ff_1) \otimes_{K(m)} K(\ff_2):K(m)]  \]
\[ = [\Q(\ff_1) \otimes_{\Q(m)} \Q(\ff_2): \Q(m)] = [ \Q(\ff_1)\Q(\ff_2):\Q(m)], \]
so $\Q(\ff_1)\Q(\ff_2) = \Q(M)$.  The other two statements of part c) follow easily.  \\
d) Again it follows from Proposition \ref{SAIAPROP1} that the field extensions $\Q(\ff_1),\ldots,\Q(\ff_r)$ are mutually linearly disjoint 
over $\Q(1)$.  So
\[ [\Q(\ff_1) \cdots \Q(\ff_r):\Q(1)] = \prod_{i=1}^r [\Q(\ff_i):\Q(1)] = \left( \frac{w_K}{2} \right)^{1-r} [\Q(\ff_1 \cdots \ff_r):\Q(1)]. \]
The other two statements of part d) follow easily.
\end{proof}

\section{The Isogeny Graph $\mathcal{G}_{K,\ell,\ff_0}$}

\subsection{Defining the graph}
Let $K$ be an imaginary quadratic field, and let $\ell$ be a prime number.  There is a directed multigraph $\mathcal{G}_{K,\ell}$ as follows: 
the vertex set $\mathcal{V}$ of $\mathcal{G}_{K,\ell}$ is the set of $j$-invariants $j \in \C$ of \textbf{K-CM} elliptic curves, i.e., $j$-invariants of complex elliptic curves with endomorphism ring an order in the imaginary quadratic field $K$.  In general, for 
$j \in \mathcal{V}$ we denote by $E_j$ a complex elliptic curve with $j$-invariant $j$.  As for the edges:
let $\pi_1: X_0(\ell) \ra X(1)$ be the natural map, let $w_N \in \Aut( X_0(N))$ be the Atkin-Lehner involution, and let 
$\pi_2 \coloneqq \pi_1 \circ w_N$: here we work over $\C$.  For $j,j' \in \mathcal{V}$, write 
\[ (\pi_2)_* \pi_1^*([j]) = \sum_P e_P [P]. \] 
Then the number of directed edges from $j$ to $j'$ is $e_{j'}$.  
Equivalently, let $E_{/\C}$ be any elliptic curve with $j$-invariant $j$.  Then the number of edges from $j$ to $j'$ is the number 
of cyclic order $\ell$ subgroups $C$ of $E$ such that $j(E/C) = j'$.  
\\ \\
In \cite[\S 4]{Clark22a} we recalled the complete structure of the graph $\mathcal{G}_{K,\ell,\ff_0}$ when $\ff_0^2 \Delta_K < -4$ and saw in particular that it was an $\ell$-volcano in the sense of \cite[\S 4.2]{Clark22a}.  Now we need to describe the structure of $\mathcal{G}_{K,\ell,\ff_0}$ when 
$\ff_0^2 \Delta_K \geq -4$: i.e., when $\ff_0 = 1$ and $\Delta_K \in \{-3,-4\}$.

\subsection{$\Delta_K = -4$} Suppose $\Delta_K = -4$, $\ff_0 = 1$, and let $\ell$ be a prime number.

\begin{example}
\label{EXAMPLE4.2}
Let $K = \Q(\sqrt{-1})$, $\ell = 2$ and $\ff_0 = 1$.  The surface of this graph consists of CM $j$-invariants of discriminant $-4$, of 
which there is $1$: $j = 1728$.  Level one consists of CM $j$-invariants of discriminant $-16$, of which there is again $1$: 
$j = 2^3 \cdot 3^3 \cdot 11^3$.  Level two consists of CM $j$-invariants of discriminant $-64$, of which there are $2$.  As always, 
they form a single Galois orbit.  We have 
\[ J_{-64}(t) = t^2 - 82226316240t - 7367066619912. \]
There is one horizontal edge at the surface (a loop), corresponding to the unique $\Z[\sqrt{-1}]$-ideal $\pp_2$ of norm $2$.  The remaining 
two edges emanating outward from $j = 1728$ connect it to $j = 2^3 \cdot 3^3 \cdot 11^3$.  This corresponds to the fact that 
the pullback of the degree $1$ divisor $J_{1728}$ under $\pi: X_0(2) \ra X(1)$ is $[J_{1728}] + 2[J_{2^3 \cdot 3^3 \cdot 11^3}]$.  
\\ \indent
One of the three order $2$ subgroups of $E_{1728}$ is $E[\pp_2]$.  The other two are interchanged by the action of $\mu_4/\mu_2$ 
on $E_{1728}[2]$.  \\ 
The vertex $j = 1728$ has outward degree $3$ and inward degree $2$, while the vertex $j = 2^3 \cdot 3^3 \cdot 11^3$ has outward degree $3$ and inward degree $4$.  \\

\begin{figure}[H] 
\includegraphics[scale=0.5]{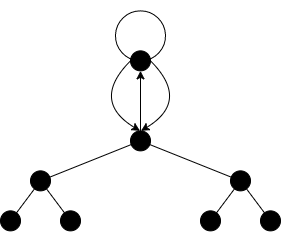}
\setlength{\abovecaptionskip}{10pt plus 0pt minus 0pt}
\caption{the graph $\mathcal{G}_{\mathbb{Q}(\sqrt{-1}), 2, 1}$ up to level $3$}\label{-4_ram_black}
\end{figure}

\end{example}

\noindent
Next suppose $\ell \equiv 1 \pmod{4}$.  Then there are two loops emanating from the surface vertex $v_0$, corresponding to the 
two prime ideals $\pp_1$, $\pp_2 = \overline{\pp_1}$ of $\Z[\sqrt{-1}]$ lying over $\ell$.   Let $v_1$ be any one of the $\frac{\ell-1}{2}$ 
level one vertices.  There are two directed edges from $v_0$ to $v_1$.  The natural action of $\mu_4/\mu_2$ on edges with emanating from $v_0$ fixes each of the two surface loops and interchanges the pair of edges from $v_0$ to $v_1$.  For each vertex at level $L \geq 1$ there is one upward 
edge and $\ell$ downward edges.
\\ \indent
Finally suppose $\ell \equiv 3 \pmod{4}$.  There are no surface edges.  For each vertex $v_1$ at level $1$ there are two edges 
from $v_0$ to $v_1$.  These two edges are interchanged by the $\mu_4/\mu_2$-action.  For each vertex at level $L \geq 1$ there 
is one upward edge and $\ell$ downward edges.

\subsection{$\Delta_K = -3$} Suppose $\Delta_K = -3$, $\ff_0 = 1$, and let $\ell$ be a prime number.

\begin{example}
\label{EXAMPLE4.3}
Let $K = \Q(\sqrt{-3})$, $\ell = 3$ and $\ff_0 = 1$.  The surface of this graph consists of CM $j$-invariants of discriminant $-3$, 
of which there is $1$: $j = 0$.  
Level one consists of CM $j$-invariants of discriminant $-3 \cdot 3^2$, of which there is again $1$: $j = - 2^{15} \cdot 3 \cdot 5^3$.   Level 
two consists of CM $j$-invariants of discriminant $-3 \cdot 3^4$, of which there are $3$, forming a single Galois orbit.  We have $J_{-3 \cdot 3^4} =$ \[ t^3 + 1855762905734664192000 t^2 -
    3750657365033091072000000 t + 3338586724673519616000000000. \]
There is one horizontal edge at the surface (a loop), corresponding to the unique $\Z[\frac{1+\sqrt{-3}}{2}]$-ideal $\pp_3$ of norm $3$.  
The remaining three edges emanating outward from $j = 0$ connect it to $j = -2^{15} \cdot 3 \cdot 5^3$.  This corresponds to the 
fact that the pullback of the degree $1$ divisor $J_0$ under $\pi: X_0(3) \ra X(1)$ is $[J_0] + 3 [J_{-2^{15} \cdot 3 \cdot 5^3}]$.  \\ \indent
One of the four order $3$ subgroups of $E_0$ is $E[\pp_3]$.   The other three are interchanged by the action of $\mu_6/\mu_2$ 
on $E_0[2]$.  \\ 
The vertex $j = 0$ has outward degree $4$ and inward degree $2$, while the vertex $j = - 2^{15} \cdot 3 \cdot 5^3$ has outward 
degree $4$ and inward degree $6$.  
\end{example}
\noindent
Next suppose $\ell \equiv 1 \pmod{3}$.  Then there are two loops emanating from the surface vertex $v_0$ corresponding to the two 
prime ideals of $\Z[\zeta_6]$ lying over $\ell$.  Let $v_1$ be any one of the $\frac{\ell-1}{3}$ level one vertices.  There are three directed edges from $v_0$ to $v_1$.  The natural action of $\mu_6/\mu_4$ on surface edges fixes each of the two surface loops and cyclically 
permutes the three edges from $v_0$ to $v_1$.  
\\ \\
Finally suppose $\ell \equiv 2 \pmod{3}$.  There are no surface edges.   For each vertex $v_1$ at level $1$ there are three edges 
from $v_0$ to $v_1$.  These edges are cyclically permuted by the $\mu_4/\mu_2$-action.  For each vertex at level $L \geq 1$ there 
is one upward edge and $\ell$ downward edges.

\subsection{Paths and $\ell^a$-isogenies}\label{paths_section} When $\ff_0^2 \Delta_K < -4$, \cite[Lemma 4.2]{Clark22a} gives a bijective correspondence 
between isomorphism classes of cyclic $\ell^a$ isogenies $\varphi: E \ra E'$ where $E_{/\C}$ is a $K$-CM elliptic curve for which the 
prime to $\ell$ part of the conductor of the endomorphism ring is $\ff_0$ and length $a$ nonbacktracking paths in $\mathcal{G}_{K,\ell,\ff_0}$.  In these cases every edge in $\mathcal{G}_{K,\ell,\ff_0}$ has a canonical inverse edge, so the directedness of $\mathcal{G}_{K,\ell,\ff_0}$ does not really intervene.  
\\ \indent
When $\ff_0^2 \Delta_K \in \{-3,-4\}$, the notion of a nonbacktracking path in $\mathcal{G}_{K,\ell,\ff_0}$ is a bit more subtle when the path involves ascent to and descent from the surface.  If we descend from any surface vertex $v_0$ to a level one vertex $v_1$ and then ascend back to $v_0$, then the latter edge must represent the dual isogeny of the former edge, since it is the \emph{unique} isogeny 
between these two elliptic curves, so this counts as backtracking.  On the other hand, if we start at a level one vertex $v_1$ take the unique edge $e: v_1 \ra v_0$ and then descend back down to $v_1$, we have a choice of $2$ edges when $\Delta_K = -4$ 
and $3$ edges when $\Delta_K = -3$.  Then $e$ corresponds to an $\ell$-isogeny $\varphi: E_1 \ra E_0$ and exactly one of the 
edges from $v_0$ to $v_1$, say $e'$, corresponds to $\varphi^{\vee}$.  So a path containing $e$ followed by $e'$ counts as backtracking, 
but a path containing $e$ followed by any other edge from $v_0$ to $v_1$ does not. \\ \indent With this understanding, \cite[Lemma 4.2]{Clark22a} 
extends to all $\Delta_K$, $\ell$ and $\ff_0$.

\begin{lemma}
\label{NEW4.2}
Let $K$ be an imaginary quadratic field, $\ell$ a prime number and $\ff_0$ a positive integer prime to $\ell$.  There is a bijective correspondence from the set of isomorphism classes of cyclic $\ell^a$-isogenies of CM elliptic curves with endomorphism algebra $K$ and prime-to-$\ell$-conductor $\ff_0$ to the set of length $a$ paths without backtracking in the isogeny graph $\mathcal{G}_{K,\ell,\ff_0}$.  
\end{lemma}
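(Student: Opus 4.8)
The plan is to follow the proof of \cite[Lemma 4.2]{Clark22a}, which is precisely this statement when $\ff_0^2 \Delta_K < -4$, and to repair the one place where that argument can break down --- at the surface vertex, whose elliptic curve has automorphisms beyond $\pm 1$ --- using the refined notion of backtracking described just above. So we may assume $\ff_0 = 1$ and $\Delta_K \in \{-3,-4\}$. The correspondence itself is the evident one: given a cyclic $\ell^a$-isogeny $\varphi\colon E \to E'$ with $E$ a $K$-CM curve of prime-to-$\ell$ conductor $1$ and $\ker\varphi = \langle P\rangle$, $P$ of order $\ell^a$, set $E_i \coloneqq E/\langle \ell^{a-i}P\rangle$, so $E = E_0$, $E' = E_a$, and the induced cyclic $\ell$-isogenies $\varphi_i\colon E_{i-1}\to E_i$ satisfy $\varphi = \varphi_a\circ\cdots\circ\varphi_1$; each $j(E_i)$ is a vertex of $\mathcal{G}_{K,\ell,1}$ and $\varphi_i$ determines an edge $j(E_{i-1})\to j(E_i)$, so concatenating yields a length-$a$ path. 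Conversely, starting from the chosen model at the initial vertex and forming iterated quotients along the subgroups labelling the edges of a path produces a composite degree-$\ell^a$ isogeny; these two constructions are formally inverse (the isomorphisms needed to proceed along a path being unique up to sign away from the surface, and at the surface being exactly the ambiguity recorded by the edge-labelling of \S\S4.2--4.3), provided each lands in the stated target. That reduces everything to a single dictionary: \emph{$\varphi_a\circ\cdots\circ\varphi_1$ is cyclic of degree $\ell^a$ if and only if the associated path has no backtracking.}

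By an easy induction this reduces to $a = 2$, where I claim that for $\ell$-isogenies $\varphi_1\colon E_0\to E_1$ and $\varphi_2\colon E_1\to E_2$ the kernel of $\varphi_2\circ\varphi_1$ is cyclic exactly when $\ker\varphi_2 \neq \ker\varphi_1^{\vee}$. Indeed $\ker(\varphi_2\circ\varphi_1)$, a group of order $\ell^2$, is non-cyclic exactly when it contains $E_0[\ell]$, i.e.\ when $\varphi_1(E_0[\ell]) \subseteq \ker\varphi_2$; and $\varphi_1(E_0[\ell]) = \ker\varphi_1^{\vee}$ (since $\varphi_1^{\vee}\circ\varphi_1 = [\ell]$) has order $\ell$, so this says $\ker\varphi_2 = \ker\varphi_1^{\vee}$. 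For general $a$: if the path backtracks at step $i$, then $\varphi_{i+1} = u\circ\varphi_i^{\vee}$ for an isomorphism $u$, so $\varphi_{i+1}\circ\varphi_i = u\circ[\ell]_{E_{i-1}}$, whence $\varphi_a\circ\cdots\circ\varphi_1$ factors through $[\ell]_{E_0}$ and $E_0[\ell]\subseteq\ker(\varphi_a\circ\cdots\circ\varphi_1)$; conversely, if the composite is non-cyclic, let $i \geq 2$ be least with $E_0[\ell]$ killed by $\varphi_i\circ\cdots\circ\varphi_1$, so that $(\varphi_{i-1}\circ\cdots\circ\varphi_1)(E_0[\ell])$ is a nonzero subgroup of the order-$\ell$ group $\ker\varphi_i$, hence equals it, and --- since $\varphi_{i-1}$ carries any subgroup of $E_{i-2}[\ell]$ not contained in $\ker\varphi_{i-1}$ onto $\ker\varphi_{i-1}^{\vee}$ --- writing this group as $\varphi_{i-1}\big((\varphi_{i-2}\circ\cdots\circ\varphi_1)(E_0[\ell])\big)$ gives $\ker\varphi_i = \ker\varphi_{i-1}^{\vee}$, so the path backtracks at step $i-1$.

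It remains to identify ``$\ker\varphi_{i+1} = \ker\varphi_i^{\vee}$'' with backtracking in the sense fixed before the lemma, and this is where the surface enters --- essentially the only modification of \cite{Clark22a} required. Off the surface $\Aut$ equals $\{\pm1\}$, so $\ker\varphi_i^{\vee}$ names a single edge and backtracking has its naive meaning, exactly as in \cite{Clark22a}. If $\varphi_i$ is the unique ascending $\ell$-isogeny $E_{v_1}\to E_{v_0}$ from a level-one vertex to the surface vertex, then by the structure recorded in \S\S4.2--4.3 exactly one of the descending edges out of $v_0$ represents $\varphi_i^{\vee}$, and $\ker\varphi_{i+1} = \ker\varphi_i^{\vee}$ precisely when $\varphi_{i+1}$ is that edge --- exactly the condition stated there. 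Dually, if $\varphi_i$ descends $v_0 \to v_1$ and $\varphi_{i+1}$ ascends $v_1 \to v_0$, then since there is a unique edge $v_1 \to v_0$ it must represent $\varphi_i^{\vee}$, so $\ker\varphi_{i+1} = \ker\varphi_i^{\vee}$ automatically, matching the stipulation that an immediate ascent-after-descent at the surface always backtracks (a descent to $v_0$ followed by the forced ascent back is the same case). With the dictionary established in all cases, the bijection follows as in \cite[Lemma 4.2]{Clark22a}. I expect this surface bookkeeping to be the main obstacle: one must be confident that the descending-edge labels of \S\S4.2--4.3 --- on which $\Aut(E_{v_0})/\{\pm1\}$ acts --- are calibrated so that ``$\ker\varphi_i^{\vee}$ is one specific descending edge'' is a well-posed assertion about the labelled graph, with no residual ambiguity once a path is permitted both to enter and to leave the surface.
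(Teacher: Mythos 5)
Your proof is correct and follows the same route the paper takes: it simply asserts that the proof of \cite[Lemma 4.2]{Clark22a} carries over once the notion of nonbacktracking is refined at the surface exactly as in \S 3.4, which is precisely the dictionary (cyclicity of $\varphi_{i+1}\circ\varphi_i$ $\Leftrightarrow$ $\ker\varphi_{i+1}\neq\ker\varphi_i^{\vee}$, plus the surface edge-labelling) that you spell out in detail. Your write-up is more explicit than the paper's, but there is no difference in substance.
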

\noindent
Moreover the proof of \cite[Lemma 4.2]{Clark22a} still works to establish Lemma \ref{NEW4.2}.

\section{Action of Complex Conjugation on $\mathcal{G}_{K,\ell,\ff_0}$}
\noindent
This section is the analogue of \cite[\S 5]{Clark22a} for $\ff_0^2 \Delta_K \in \{-3,\-4\}$.  We define an action of $\gg_{\R} = \{1,c\}$ 
on the isogeny graph $\mathcal{G}_{K,\ell,\ff_0}$ that is crucial for our subsequent analysis...almost.  We will see that in two cases there 
is \emph{no} such action that is suitable for our purposes, so instead we define an action on a certain double cover of $\mathcal{G}_{K,\ell,\ff_0}$.

\subsection{The Field of Moduli of a Cyclic $\ell^a$-isogeny}

\begin{thm}
\label{NEW6.1}
\label{THM4.1}
Let $\ell^a$ be a prime power, let $K$ be an imaginary quadratic field, and let $\varphi: E \ra E'$ be a cyclic $\ell^a$-isogeny of $K$-CM elliptic curves over $\C$, and let $\Q(\varphi)$ be the field of moduli of $\varphi$. Let $\Delta = \ell^{2 L} \ff_0^2 \Delta_K$ be the discriminant of the endomorphism ring of $E$ (here $\gcd(\ff_0,\ell) = 1$), let $\Delta' = \ell^{2 L'} \ff_0^2 \Delta_K$ be the discriminant of the endomorphism ring of $E'$, and put $\overline{L} = \max(L,L')$ and $\ff = \ell^{\overline{L}} \ff_0$.  
Then:
\begin{itemize}
\item[a)] There is a field embedding $\Q(\ff) \hookrightarrow \Q(\varphi)$. 
\item[b)] We have $\Q(\varphi) \subseteq K(\ff)$.
\end{itemize}
\end{thm}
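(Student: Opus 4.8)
The plan is to reduce everything to the classical statement that the field of moduli over $K$ of a cyclic $\ell^a$-isogeny $\varphi : E \to E'$ between $K$-CM elliptic curves is the compositum $K(j(E), j(E')) = K(j_\Delta, j_{\Delta'})$, which in turn equals the ring class field $K(\ff)$ where $\ff = \ell^{\overline L} \ff_0$ is the conductor associated to the larger of the two levels. Granting this, part b) is essentially immediate: the field of moduli over $\Q$ is contained in the field of moduli over $K$ (adjoining $K$ can only shrink the automorphism group of objects one must account for, hence can only enlarge the field of moduli, so $\Q(\varphi) \subseteq K \cdot \Q(\varphi) = K(\varphi) = K(\ff)$). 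Part a) then says that the \emph{rational} ring class field $\Q(\ff) = \Q(j_\Delta)$ embeds into $\Q(\varphi)$; equivalently, since $\Q(\varphi) \subseteq K(\ff)$ and $[K(\ff):\Q(\ff)] \le 2$, we must show $\Q(\varphi)$ is not a subfield of $K$ strictly smaller than, or transverse to, $\Q(\ff)$ — concretely, that $j(E) \in \Q(\varphi)$.

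Here is the order I would carry things out. First, record the classical identification: an isomorphism class of cyclic $\ell^a$-isogeny of $K$-CM elliptic curves corresponds (by Lemma \ref{NEW4.2}) to a nonbacktracking length-$a$ path in $\mathcal{G}_{K,\ell,\ff_0}$ with initial vertex at level $L$ and terminal vertex at level $L'$; the field of moduli over $K$ of such a path is $K(j(E),j(E'))$. Second, identify $K(j(E),j(E'))$ with $K(\ff)$: since $j(E)$ generates $K(\ell^L \ff_0)$ and $j(E')$ generates $K(\ell^{L'}\ff_0)$ over $K$, and since these two ring class fields have compositum $K(\operatorname{lcm}) = K(\ell^{\overline L}\ff_0)$ — this is exactly Proposition \ref{SAIAPROP1}a) when the gcd exceeds $1$, and is handled directly (or via Proposition \ref{SAIAPROP1}b),c)) in the remaining small-conductor cases where one of the levels has class number $1$ — we get $K(j(E),j(E')) = K(\ff)$. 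Note $\ell^{\overline L}\ff_0 = \ell^{\max(L,L')}\ff_0$, matching the theorem's $\ff$. This already gives b). Third, for a), I would show $j(E) = j_\Delta \in \Q(\varphi)$ directly from the definition of field of moduli: the $\Q$-conjugates of (the iso class of) $\varphi$ are in particular $\Q$-conjugate cyclic $\ell^a$-isogenies, and applying the source functor $\varphi \mapsto j(E)$ (which commutes with the Galois action since it is defined over $\Q$), the stabilizer of $[\varphi]$ in $\mathrm{Gal}(\overline\Q/\Q)$ is contained in the stabilizer of $j(E)$; hence $\Q(j(E)) = \Q(\ff) \subseteq \Q(\varphi)$.

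The step I expect to be the real obstacle is \emph{not} part b) but the clean identification $K(j(E),j(E')) = K(\ff)$ together with the subtlety, flagged in the introduction under Step 1a, that when $\ff_0 = 1$ and $\Delta_K \in \{-3,-4\}$ the graph $\mathcal{G}_{K,\ell,1}$ is no longer a volcano: there can be multiple edges descending from the surface, and the dual of a surface-adjacent edge requires the careful nonbacktracking convention spelled out before Lemma \ref{NEW4.2}. In those cases $L'$ can differ from $L$ by reasons specific to the $-3,-4$ geometry, and one must check that $j(E')$ still lies in $K(\ell^{L'}\ff_0)$ and that no spurious collapse occurs — i.e., that even when one of $K(\ell^L\ff_0)$, $K(\ell^{L'}\ff_0)$ equals $K(1)$ (the class-number-one exceptions collected in the set $S$ of Proposition \ref{SAIAPROP2}), the compositum is still $K(\ell^{\overline L}\ff_0)$, which is exactly why Proposition \ref{SAIAPROP1}b) was recorded. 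Once the compositum computation is pinned down in every case via §2, the two containments follow formally, so the bulk of the write-up is bookkeeping over the cases $\max(L,L') = 0$, the level-zero-to-level-one transitions, and $\max(L,L') \ge 2$.
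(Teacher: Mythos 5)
Your proposal has a genuine gap, and it sits exactly where you say the proof is easy. You take as ``classical'' the statement that the field of moduli of $\varphi$ over $K$ equals $K(j(E),j(E'))$, and then declare that the real work is identifying $K(j(E),j(E'))$ with $K(\ff)$. Both halves of this are off. The compositum identification is trivial here: since $N=\ell^a$ is a prime power, the two conductors $\ell^L\ff_0$ and $\ell^{L'}\ff_0$ divide one another, so the ring class fields are nested and $K(j(E))K(j(E')) = K(\ell^{\overline{L}}\ff_0) = K(\ff)$ with no appeal to Proposition \ref{SAIAPROP1} and no class-number-one subtleties. What is \emph{not} classical — and is precisely the content of part b) when $\ff_0^2\Delta_K\in\{-3,-4\}$ — is the containment $K(\varphi)\subseteq K(j(E),j(E'))$. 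For $\Delta<-4$ this holds because $\Aut(E_v)=\{\pm 1\}$ for every curve along the path, so Galois-stability of a cyclic subgroup is independent of the chosen model and one quotes \cite[Thm.~5.1]{Clark22a}. When the path touches the surface vertex with $j\in\{0,1728\}$ (which it always does here, since the surface curve is the unique curve of its level), the extra automorphisms mean the projective $\ell^{\overline{L}}$-torsion field depends on the model: for one $K(\ff)$-rational model of the surface curve a given descending kernel is Galois-stable, for a twist by a quartic or sextic character it is not. So ``the field of moduli over $K$ is $K(j(E),j(E'))$'' is exactly the assertion to be proved, not a black box one may invoke; assuming it makes part b) circular.

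The paper closes this gap by exhibiting a specific good model: it takes the target of the canonical $K(\ff)$-rational isogeny from a $\ff^2\Delta_K$-CM curve as the model of $E$ over $K(\ff)$, invokes \cite[Prop.~4.5]{BCI} to see that the mod $\ell^{\overline{L}}$ Galois representation on the relevant surface curve is scalar for this choice, and then factors $\varphi=\varphi_3\circ\varphi_2\circ\varphi_1$ into ascending, horizontal and descending parts, checking each is defined over $K(\ff)$ (the horizontal part needs the extra observation that it induces a $K(\ff)$-rational isomorphism of the surface curve with itself, so scalarity is preserved). Your proposal contains no substitute for this step. There is also a small slip in part a): you argue $j(E)\in\Q(\varphi)$ and conclude $\Q(\ff)=\Q(j(E))\subseteq\Q(\varphi)$, but $\Q(j(E))\cong\Q(\ell^L\ff_0)$, which is isomorphic to $\Q(\ff)$ only when $L\geq L'$; when $L'>L$ you must instead use $j(E')$, which is why the paper phrases the argument as ``at least one of $\Q(j(E)),\Q(j(E'))$ is isomorphic to $\Q(\ff)$.''
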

\begin{proof}
This result is a special case of \cite[Thm. 5.1]{Clark22a} when $\ff_0^2 \Delta_K < -4$, so we may assume that $\Delta_K \in \{-3,-4\}$ and 
$\ff_0 = 1$.  
a) Certainly $\Q(\varphi)$ contains both $\Q(j(E)) \cong \Q(\ell^{L} \ff_0)$ and $\Q(j(E')) \cong \Q(\ell^{L'} \ff_0)$.  
At least one of these 
fields is isomorphic to $\Q(\ell^{\overline{L}} \ff_0) = \Q(\ff)$.  \\
b)  As usual, without loss of generality we may assume that $j(E) = j_{\Delta}$.  
Let $(E_0)_{/K(\ff)}$ be any $K$-CM elliptic curve with endomorphism ring of discriminant $\ff^2 \Delta_K$.  Since 
$\Delta \mid \ff^2 \Delta_K$, there is a canonical $K(\ff)$-rational isogeny $\varphi_0$ with source elliptic curve $E_0$ 
and whose target elliptic curve has $j$-invariant $j_{\Delta} = j(E)$.  We choose this target elliptic curve as our model 
for $E$ over $K(\ff)$, and our task is to show that for this model of $E$, the kernel of $\varphi$ is a $\gg_{K(\ff)}$-stable 
subgroup.   In fact we will show that if $\varphi$ is any cyclic $\ell^a$-isogeny with source elliptic curve $E_{/K(\ff)}$ 
and target elliptic curve of level $L'$, then $\varphi$ is defined over $K(\ff)$ in the sense that its kernel is $g_{K(\ff)}$-stable.  
  \\ \indent
The isogeny $\varphi$ decomposes into $\varphi_3 \circ \varphi_2 \circ 
\varphi_1$ with $\varphi_1: E \ra E_1$ ascending, $\varphi_2: E_1 \ra E_2$ horizontal and $\varphi_3: E_2 \ra E'$ 
descending.   We define $b,h,d \in \N$ by 
\[ \deg \varphi_1 = \ell^b, \ \deg \varphi_2 = \ell^h, \ \deg \varphi_3 = \ell^d. \]
The isogeny $\varphi_1$ is unique, so it is certainly defined over $K(\ff)$.  If $\varphi_2 \neq 1$, then $\varphi_2$ is, up to isomorphism 
on its target, given as $E_1 \ra E_1/E_1[I]$ for a nonzero $\Z_K$-ideal $I$, so $\varphi_2$ is defined over $K(j(E_1)) = K \subseteq K(\ff)$.  Thus it suffices to show that the descending $\ell^d$-isogeny $\varphi_3: E_2 \ra E'$ is defined over $K(\ff)$.  For this the more difficult case is when $E_2$ lies at the surface.  If $E_2$ lies below the surface, then whether the kernel of $\varphi_3$ is 
$\gg_{K(\ff)}$-stable is independent of the model of $E_2$, and the dual isogeny $\varphi_3^{\vee}: E' \ra E_2$ is ascending 
so is defined over $K(j(E')) = K(j_{\Delta'}) \subseteq K(\ff)$ on any model of $E'$, so $\varphi_3$ is also defined over $K(\ff)$.  
Thus we may assume that $E_2$ lies at the surface.  Since $\varphi_2: E_1 \ra E_2$ is horizontal, also $E_1$ lies at the surface.  By our 
choice of $E$, we have that $E_1$ is the target elliptic curve of a cyclic $K(\ff)$-rational $\ell^a$-isogeny with source elliptic curve $E_0$.  By \cite[Prop. 4.5]{BCI} and its proof, we have that the modulo $\ell^{\overline{L}}$-Galois representation on 
$(E_1)_{/K(\ff)}$ consists of scalar matrices, which means that every cyclic $\ell^{\overline{L}}$-isogeny on $E_1$ is 
defined over $K(\ff)$.  Since $d = L' \leq \overline{L}$, the same holds for every cyclic $\ell^d$-isogeny on $E_1$.  
If $\varphi_2 = 1$ this tells us directly that $\varphi_3$ is defined over $K(\ff)$.  In general: since $\varphi_2$ is horizontal, 
$\Z_K$ has class number $1$ and $K(\ff)$ contains $K$, then $\varphi_2$ is given, up to an isomorphism on the tareget, by a $K(\ff)$-rationally defined endomorphism of $E_2$, so $E_3$ is $K(\ff)$-rationally isomorphic to $E_2$.  It follows that every downward cyclic $\ell^b$-isogeny on 
$E_2$ has $\gg_{K(\ff)}$-stable kernel, so $\varphi_3$ is defined over $K(\ff)$.  
\end{proof}
\noindent
Thus we get a simple dichotomy for the field of moduli $\Q(\varphi)$ 
of a cyclic $\ell^a$-isogeny $\varphi$: for the specific value of $\ff$ given in Theorem \ref{NEW6.1} in terms of the endomorphism 
rings of the source and target elliptic curves of $\varphi$, we know that $\Q(\varphi)$ is isomorphic to either $\Q(\ff)$ or to $K(\ff)$.  
As in \cite[\S 5]{Clark22a}, we can resolve this dichotomy by understanding the action of complex conjugation on 
paths in the isogeny graph.

\subsection{Action of Complex Conjugation on $\mathcal{G}_{K,\ell,\ff_0}$}\label{action_of_conjugation_section}
First of all we have an action of complex conjugation --- by this we will always really mean an action of the group $\gg_R = \{1,c\}$ --- 
on the set of vertices of $\mathcal{G}_{K,\ell,\ff_0}$: indeed, the vertices are $j$-invariants of complex elliptic curves, so this is just 
obtained by restricting the natural action of $c$ on $\C$.  From \cite[\S 2.5]{Clark22a} we know that for all $L \in \Z^{\geq 0}$, the number of real vertices in level $L$ is 
\[ \rr_L \coloneqq \# \Pic \OO(\ell^{2L} \ff_0^2 \Delta_K)[2], \]
and Gauss's genus theory of binary quadratic formulas yields a formula for $\rr_L$ in terms of the number of odd prime divisors 
of $\Delta$ and the class of $\Delta$ modulo $32$\ \cite[Lemma 2.8]{Clark22a}.
\\ \\
In the absence of multiple edges, this action of $c$ on the vertex set of $\mathcal{G}_{K,\ell,\ff_0}$ determines the action on 
the graph.  When $\ff_0^2 \Delta_K < -4$ the only possible multiple edges are surface edges, on which the action of $c$ is 
easy to understand: the two nonisomorphic $\R$-structures on a real vertex differ from each other by quadratic twisting by $-1$, so the action of complex conjugation on the set of order $\ell$-subgroups is independent of the choice of $\R$-structure.  The answer 
is then that an edge running beween two real surface vertices is \emph{not} fixed by complex conjugation in the split case and is fixed by 
complex conjugation in the ramified case (there are no surface edges in the inert case). 
\\ \\
We are in the case where $\ff_0^2 \Delta_K \in \{-3,-4\}$.  Then we still have:
\\ \\
$\bullet$ If $v$ is a vertex at level $L \geq 1$ and $e: v \ra w$ is a downward edge, then it is the only edge from $v$ to $w$, so $e$ is real if and only if $v$ and $w$ are.  (Again, because we are below the surface, $\Aut E_v = \{\pm 1\}$, so the action of complex conjugation on subgroups of $E_v$ is independent of the chosen $\R$-model.)  
\\ \\
$\bullet$ An upward edge $e: v \ra w$ gets mapped under complex conjugation to the unique upward edge with initial vertex $c(v)$, 
so $e$ is real if and only if $v$ is real.  
\\ \\
The trickier cases are those of a surface edge and of an edge running from the (unique, real) surface vertex $v_0$ to a real level 
$1$ vertex.  We will discuss these in detail shortly.  
\\ \\
In general, we make use of the following convention: for all $L \in \Z^{\geq 0}$ we mark one vertex at level $L$: the one with $j$-invariant \[j_L \coloneqq j(\C/\OO(\ell^{2L} \Delta_K)). \] In our diagrams, this is always the leftmost vertex in a given level.  The lattice $\OO(\ell^{2L} \Delta_K)$ gives rise to a particular model $E_L$ over $\Q(j_{\ell^{2L} \Delta_K})$ and hence to a particular model over $\R$.  These models are compatible: for all $L \geq 1$, the upward edge from $j_L$ to $j_{L-1}$ is 
realized by the $\Q(j_L)$-rational isogeny $\C/\OO(\ell^{2L} \Delta_K) \ra \C/\OO(\ell^{2L-2} \Delta_K)$.
\\ \\
$\bullet$ Suppose $\Delta = -4$ and $\ell > 2$.  We have $\mathfrak{r}_0 = 1$ and $\mathfrak{r}_L = 2$ for all $L \geq 1$.  
Each real vertex $v$ in level $L \geq 1$ has an odd number, $\ell$, of descendant vertices, so at least one of these must be fixed by complex conjugation, and it follows that $v$ has exactly one real descendant. 
\\ \\
It remains to discuss the action of complex conjugation on the set of directed edges emanating from the surface vertex $v_0$, which corresponds to ``the'' elliptic curve $E_{/\C}$ with $j$-invariant $1728$.  By \cite[Thm. 5.3]{Clark22a}, for any real elliptic curve and any odd prime $\ell$, there are exactly $2$ order $\ell$-subgroups of $E(\C)$ stabilized by complex conjugation.  When $\ell \equiv 1 \pmod{4}$ there are two surface loops corresponding to $E[\pp]$ and $E[\overline{\pp}]$ where $\pp,\overline{\pp}$ are the two $\Z[\sqrt{-1}]$-ideals of norm $\ell$.  These two edges are interchanged by complex conjugation (independently of the chosen $\R$-structure on $E$).  
So the two real edges must be downward edges.  For each real level one vertex $v$, there is a pair of edges from $v_0$ to $v$; evidently 
complex conjugation stabilizes the pair, so if one is real, then both are real.  It follows that for exactly one of the two level $1$ real 
vertices both edges from the surface to that vertex are real, whreas for the other level $1$ real vertex neither edge is real.  Which is 
which depends upon the chosen $\R$-structure on $v_0$: indeed, indeed, for each level $1$ real vertex $v$, the unique upward 
edge $e: v \ra v_0$ can be defined over $\Q(j(E_v))$ and hence over $\R$; this provides an $\R$-model for $E$ on which the dual 
isogeny is real.  
\\ \\
If our path starts at $v_0$ and ends up at level $L$ then it is clear that the field of moduli is $K(\ell^L)$ if the path includes a 
surface edge and $\Q(\ell^L)$ otherwise.  The harder case is if our path starts at $j_L$ with $L \geq 1$ and ascends to the surface.  
In this case when we ascend to the surface we get the real model for $E$ given by the lattice $\Z[\sqrt{-1}]$, and \emph{in this real model} it is the two edges from $j_0$ to $j_1$ that are real.  The significance of this for our counting problem is that if we start below 
the surface and ascend to the surface there is a unique way to extend the path so that the corresponding isogeny is fixed under 
complex conjugation: we take the unique edge from $j_0$ to $j_1$ that is not the inverse of the ascending edge from $j_1$ to $j_0$.  
\\ \\
Thus one sees that in this case we \emph{are} able to define an action of $\gg_R$ on $\mathcal{G}_{K,\ell,1}$, but to do so we had to make a choice that was appropriate for our applications. 

\begin{figure}[H] 
\includegraphics[scale=0.5]{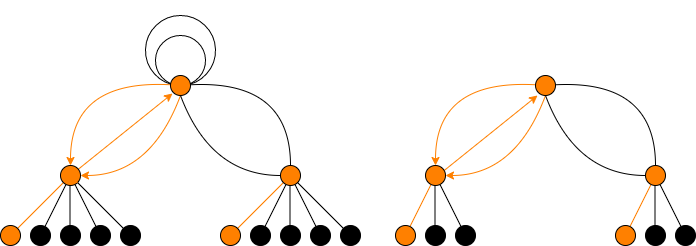}
\setlength{\abovecaptionskip}{10pt plus 0pt minus 0pt}
\caption{$\mathcal{G}_{\mathbb{Q}(\sqrt{-1}),\ell,1}$ up to level $2$ in the cases of $\ell$ split ($\ell = 5$, left) and inert ($\ell = 3$, right) in $\mathbb{Q}(\sqrt{-1})$, with vertices and edges fixed by complex conjugation colored orange}
\end{figure}

$\bullet$ Suppose $\Delta = -3$ and $\ell > 3$.  As above, we have $\mathfrak{r}_0 = 1$ and $\mathfrak{r}_L = 2$ for all $L \geq 2$.  
And again, each real vertex $v$ in level $L \geq 1$ has an odd number, $\ell$, of descendant vertices, so $v$ has a unique real descendant.  
If $\ell \equiv 1 \pmod{3}$ there is a pair of surface loops that are interchanged by complex conjugation; if $\ell \equiv 2 \pmod{3}$ 
there are no surface edges.  So by \cite[Thm. 5.3]{Clark22a} in either $\R$-model of ``the'' elliptic curve $E_{/\C}$ with $j$-invariant $0$ 
corresponding to the surface vertex $v_0$ there are precisely $2$ order $\ell$-subgroups stable under complex conjugation.  But 
this time things work out more nicely: there are three edges from $v_0$ to each of the two real level $1$ vertices, which as a set are 
stable under complex conjugation.  Since $3$ is odd, at least one edge in each set must be fixed by $c$, hence exactly one because 
there are two such edges altogether.  

\begin{figure}[H] 
\includegraphics[scale=0.5]{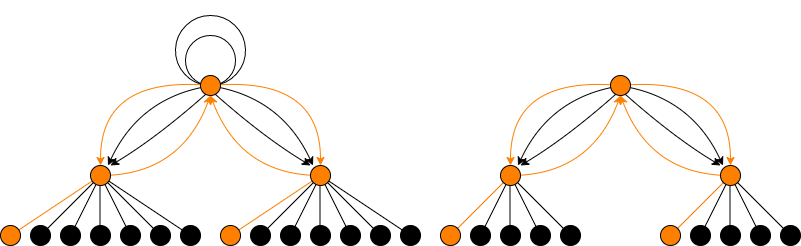}
\setlength{\abovecaptionskip}{10pt plus 0pt minus 0pt}
\caption{$\mathcal{G}_{\mathbb{Q}(\sqrt{-3}),\ell,1}$ up to level $2$ in the cases of $\ell$ split ($\ell = 7$, left) and inert ($\ell = 5$, right) in $\mathbb{Q}(\sqrt{-3})$, with vertices and edges fixed by complex conjugation colored orange}
\end{figure}

$\bullet$ Suppose $\Delta = -3$ and $\ell  =2$.  Now we have $\mathfrak{r}_0 = \mathfrak{r}_1 = 1$, $\mathfrak{r}_2 = 2$ and 
$\mathfrak{r}_L = 4$ for all $L \geq 3$.   This means that every vertex of level $L \leq 3$ is real.  For each $L \geq 3$, the real 
vertices of level $L$ can be partitioned into pairs in which each pair has a common neighbor in level $L-1$, and in each pair, 
exactly one of the two vertices has two real descendants and the other vertex has no real descendants.  This follows from the same 
argument as in the proof of \cite[Lemma 5.7c)]{Clark22a}.  

\begin{figure}[H] 
\includegraphics[scale=0.5]{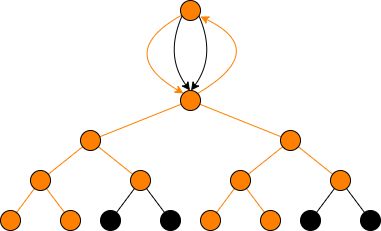}
\setlength{\abovecaptionskip}{10pt plus 0pt minus 0pt}
\caption{$\mathcal{G}_{\mathbb{Q}(\sqrt{-3}),2,1}$ up to level $4$, with vertices and edges fixed by complex conjugation colored orange}
\end{figure}

$\bullet$ Suppose $\Delta = -4$ and $\ell = 2$.  We have $\rr_0 = \rr_1 = 1$ and $\rr_L = 2$ for all $L \geq 2$.  For all $L \geq 2$, 
the vertex $v_L$ corresponding to $j$-invariant $j_L = j(\C/\OO(-2^{2L + 2})$ is real; the other real vertex in level $L$ therefore 
must be the other descendant vertex from $v_{L-1}$.  
\\ \\
Let us now discuss the action of complex conjugation on edges.  Let $E_{/\C}$ be ``the'' elliptic curve of $j$-invariant $1728$.  
In either $\R$-model of $E$, the surface loop corresponds to the isogeny with kernel $E[\pp]$, where $\pp$ is the unique prime ideal 
of $\Z[\sqrt{-1}]$ lying over $2$, which is stable under complex conjugation.  
\\ \\
If we choose the $\R$-model of $E$ with real lattice $\Z[\sqrt{-1}]$, then all three order $2$ subgroups are stable under complex conjugation: they can be seen quite clearly as $\frac{1}{2} + \Z[\sqrt{-1}$, $\frac{\sqrt{-1}}{2} + \Z[\sqrt{-1}]$ and 
$\frac{1+\sqrt{-1}}{2} + \Z[\sqrt{-1}]$.  So it may seem that we have defined an action of 
complex conjugation on $\mathcal{G}_{\Q(\sqrt{-1}),2,1}$.
\\ \indent
However this graph cannot be used for our study of isogenies!  To see why, consider either of the two paths that starts at the 
vertex $v_1$ in level $1$, ascends to level $0$, takes the surface loop, and then descends back down to level $1$.  These correspond 
to two cyclic $8$-iosgenies with source elliptic curve of discriminant $-16$.  However, contrary to what the graph suggests, neither 
of these two isogenies is defined over $\R$.  Our graph is letting us down because the 
surface loop, which can be realized on uniformizing lattices as $\C/\Z[\zeta_4] \ra \C/(1+\zeta_4)\Z[\zeta_4]$ is an isogeny 
of real elliptic curves, but the source and target have different $\R$-structures.  Recall that every elliptic curve $E_{/\C}$
with $j(E) \in \R$ has precisely two nonisomorphic $\R$-models \cite[Prop. V.2.2]{SilvermanII}.  When $j \notin \{0,1728\}$, 
these two models are just quadratic $-1$ twists of each other, but this is not the case when $j \in \{0,1728\}$.  When $j = 1728$ 
(i.e., $\Delta = -4$), for our purposes the most useful way to distinguish between the two models is to observe that in 
the model $\C/\Z[\zeta_4]$ all three order $2$ subgroups are real, whereas in the model $\C/(1+\zeta_4)\Z[\zeta_4]$ there is 
exaxctly one real order $2$ subgroup, generated by $1 + (1+\zeta_4)\Z[\zeta_4]$.  This means that in our length $3$ paths considered 
above, once we take the surface loop, we arrive at an elliptic curve over $\R$ for which the two order $2$ subgroups that correspond 
to the $2$ downward edges from $v_0$ to $v_1$ are now interchanged by complex conjugation.  
\\ \\
We remedy this by passing from $\mathcal{G} = \mathcal{G}_{\Q(\sqrt{-1}),2,1}$ to the double cover $\widetilde{\mathcal{G}}$ by unwrapping the surface loop, to get a graph that now at each level $L$, consists of two copies of the vertex set of $\mathcal{G}$ 
at level $L$.  We decree that complex conjugation acts on the second copy of the vertex set the same way it does on the first copy.  
The surface edge between the two copies of $v_0$ is real, but in the second copy the two downward edges from $v_0$ to $v_1$ 
are now complex.  Complex conjugation acts on all other edges in the second copy the same as it does in the first copy (away from the surface the action of complex conjugation on cyclic subgroups is independent of the choice of $\R$-model).  
\begin{figure}[H] 
\includegraphics[scale=0.5]{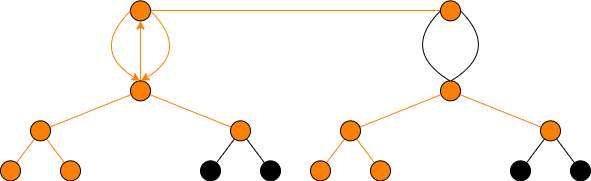}
\setlength{\abovecaptionskip}{10pt plus 0pt minus 0pt}
\caption{the double cover $\widetilde{\mathcal{G}}$ of $\mathcal{G}_{\mathbb{Q}(\sqrt{-1}),2,1}$ up to level $3$, with vertices and edges fixed by complex conjugation colored orange}
\end{figure}

\begin{remark}
\textbf{}
\begin{itemize}
\item[a)]In Figure 5, we did not draw the upward edge with initial vertex the level $1$ vertex in the right hand copy of $\mathcal{G} = \mathcal{G}_{\mathbb{Q}(\sqrt{-1}),2,1}$.  As far as the group action of $\gg_{\R}$ on $\widetilde{\mathcal{G}}$ is concerned, it 
is clear that this edge must be $c$-fixed.  However the $c$-fixedness of this edge has no elliptic curve interpretation -- no nonbacktracking path starting in the lefthand copy of $\mathcal{G}$ in $\widetilde{\mathcal{G}}$ contains this edge.  Drawing this edge as $c$-fixed 
seems to invite confusion, so we have not done so.
\item[b)] 
It's interesting to compare $\widetilde{\mathcal{G}}$ to the graph of \cite[Lemma 5.7]{Clark22a}. These graphs are not isomorphic, but their enumerations of real and complex paths are the same.
\item[c)] It is also interesting (and perhaps confusing, at first) to compare the change of real structures induced by the horizontal 
edge in $\mathcal{G}_{\Q(\sqrt{-1}),2,1}$ to the end of the proof of Theorem \ref{THM4.1}, in which the source and target curves 
of a horizontal edge are rationally isomorphic. The difference is that in the setting of Theorem \ref{THM4.1} the ground field contains $K$.
As for the horizontal edge, it corresponds to the ideal $(1+\zeta_4)$, which is real and principal...but not ``real-principal'': i.e., it is not 
generated by a real element and thus its kernel is not the kernel of an $\R$-rationally defined endomorphism.
\end{itemize}
\end{remark}
\noindent
$\bullet$ Suppose $\Delta = -3$ and $\ell = 3$.  We have $\mathfrak{r}_L = 1$ for all $L \geq 0$, so the unique real 
vertex in level $L$ is $v_L$, corresponding to the elliptic curve $\C/\OO(-3^{2L+1})$.  
\\ \\
There is a sort of ``more benign'' analogue of the phenomenon encountered in the previous case: the surface loop in this 
graph corresponds to the $\R$-isogeny $\C/\Z[\zeta_6] \ra \C/(1-\zeta_3)\Z[\zeta_6]$.  The source and target elliptic curves 
are isomorphic over $\C$ but have different $\R$-structures.  Indeed, by \cite[Lemma 3.2]{BCS17}, if $\Lambda_1$ and $\Lambda_2$ 
are real lattices in $\C$, then they determine the same $\R$-isomorphism class of elliptic curves if and only if they are \text{real homothetic}: there is $\alpha \in \R^{\times}$ such that $\Lambda_2 = \alpha \Lambda_1$.  The two lattices $\Z[\zeta_6]$ 
and $(1-\zeta_3)\Z[\zeta_6]$ are not real homothetic: one can see this directly or use \cite[Lemma 3.6a)]{BCS17}.  \\ \indent
So we defined an action of complex conjugation on the three downward edges with initial vertex the surface vertex $v_0$: one 
is real and two are complex.  After we take the surface loop we are now considering the action of complex conjugation on a 
nonisomorphic real elliptic curve.  Because of this, the principled response is to again pass from 
$\mathcal{G}_{\Q(\sqrt{-3}),3,1}$ to the double cover $\widetilde{\mathcal{G}}$ by unwrapping the surface loop to get a 
graph that at each level $L$ consists of two copies of the vertex set of $\mathcal{G}$ at level $L$, and we define the action of 
complex conjugation in the same way as above.
\begin{figure}[H] 
\includegraphics[scale=0.5]{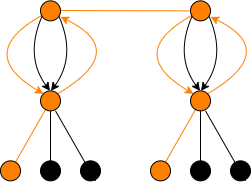}
\setlength{\abovecaptionskip}{10pt plus 0pt minus 0pt}
\caption{the double cover $\widetilde{\mathcal{G}}$ of $\mathcal{G}_{\mathbb{Q}(\sqrt{-3}),3,1}$ up to level $2$, with vertices and edges fixed by complex conjugation colored orange}
\end{figure}
\noindent
While in the previous case the change of $\R$-structure changed the \emph{number} of order $\ell = 2$ subgroups 
fixed by complex conjugation, in this case $\ell = 3$, so \cite[Thm. 5.1]{Clark22a} applies to show that in any $\R$-model 
exactly one of the three ``downward'' order $3$ subgroups is real.  So while in the previous case we needed to pass to the 
double cover in order to ensure the correctness of our enumeration of real and complex paths, in this case the enumeration of 
real and complex paths is the same whether we pass from $\mathcal{G}$ to $\widetilde{\mathcal{G}}$ or not.

\section{CM Points on $X_0(\ell^a)_{/\Q}$}\label{CM_points_prime_power_section}
Let $\ell$ be a prime number, and let $\Delta = \ell^{2L} \Delta_K$ be an imaginary quadratic discriminant with $\Delta_K \in \{-3,-4\}$.  In this section we will compute the fiber of $X_0(\ell^a) \ra X(1)$ over $J_{\Delta}$. For $\Delta < -4$ there is no ramification, 
so we determine which residue fields occur and with what multiplicity. For $\Delta \in \{-3,-4\}$, a closed point on $X_0(\ell^a)$ in the fiber over $J_{\Delta}$ has ramification, of index $2$ or $3$ in the respective cases of $\Delta = -4$ and $-3$, exactly when a path in its closed point equivalence class includes a descending edge from level $0$ to level $1$, i.e. exactly when the path is not completely horizontal. The residue field of a closed point on a finite-type $\Q$-scheme is a number field that is well-determined up to isomorphism; it is not well-defined as a subfield of $\C$.  Thus when we write that 
the residue field is $\Q(\ff)$ for some $\ff \in \Z^+$, we mean that it is isomorphic to this field.  
\\ \noindent
Without loss of generality we may take our source elliptic curve to have $j$-invariant $j_{\Delta}$. Our task is then to: \\ \\
(i) Enumerate all nonbacktracking length $a$ paths $P$ in $\mathcal{G}_{K,\ell,\ff_0}$. \\
(ii) Sort them into closed point equivalence classes $\mathcal{C}(P)$, and record the field of moduli for each equivalence 
class (we record any number field isomorphic to $\Q(f)$ as $\Q(f)$).  \\
(iii) Record how many closed point equivalence classes give rise to each field of moduli.  
\\ \noindent
In \S \ref{paths_section} we have addressed the added subtlety in the notion of backtracking when $f_0^2\Delta_K \in \{-3,-4\}$, and in \S \ref{action_of_conjugation_section} we have provided a meaningful description of the action of complex conjugation on paths in $\mathcal{G}_{K,\ell, 1}$. This provides the means to carry out our path-type analysis steps (i) through (iii), just as done in \cite[\S 7]{Clark22a} for $f_0^2\Delta_K < -4$. What we find is that the resulting enumeration of path types and corresponding residue fields for $f_0^2\Delta_K \in \{-3,-4\}$ is \emph{exactly} as in \cite[\S 7]{Clark22a} for any given $\ell$ and splitting behavior of $\ell$ in $K$, and so we refer the reader to the enumeration provided therein. 
\\ \\
A check on the accuracy of our calculations is as follows: let $\psi: \Z^+ \ra \Z^+$ be the multiplicative function such that 
for any prime power $\ell^a$ we have $\psi(\ell^a) = \ell^{a-1}(\ell+1)$.  For all $N \in \Z^+$, we have (e.g. \cite[Lemma 4.1a)]{CGPS22})
\[ \deg(X_0(N) \ra X(1)) = \psi(N). \]
Letting $d_{\varphi}$ and $e_{\varphi}$ denote, respectively, the residual degree and ramification index of the closed point $[\varphi]$ with respect to the map $X_0(\ell^a) \rightarrow X(1)$, we must have 
\[ \sum_{C(\varphi)} d_{\varphi} \cdot e_{\varphi} = \psi(\ell^a) = \ell^a + \ell^{a-1}, \]
where the sum extends over closed point equivalence classes of points in the fiber over $J_{\Delta}$.

\section{The Projective Torsion Field}
\noindent
Let $F$ be a field of characteristic $0$, let $E_{/F}$ be an elliptic curve, and let $N \in \Z^{\geq 2}$.  The \textbf{projective $N$-torsion field} 
$F( \PP E[N])$ is the kernel of the modulo $N$ projective Galois representation, i.e., the composite homomorphism 
\[ \overline{\rho_N}: \gg_F \stackrel{\rho_N}{\ra} \Aut E[N] \ra \Aut \PP E[N], \]
where $\PP E[N]$ denotes the projectivization of the $2$-dimensional $\Z/N\Z$-module $E[N](\overline{F})$.  After choosing a $\Z/N\Z$-basis 
for $E[N]$, we may view $\overline{\rho_N}$ as a homomorphism from $\gg_F$ to $\PGL_2(\Z/N\Z)$.   Thus $F(\P E[N])/F$ is a finite degree 
Galois extension.  The projective $N$-torsion field of $E/F$ is also characterized as the minimal algebraic extension of $F$ over which all cyclic $N$-isogenies with source elliptic 
curve $E$ are defined.
\\ \\
The following result is a small refinement of \cite[Prop. 4.5]{BCI}.

\begin{prop}
\label{REFINEDBC4.5}
Let $\Delta = \ff^2 \Delta_K$ be an imaginary quadratic discriminant, let $N \geq 2$, and let $E_{/K(N\ff)}$ be a $\Delta$-CM elliptic curve.
\begin{itemize}
\item[a)] The following are equivalent:
\begin{itemize}
\item[(i)] We have $K(N\ff)( \PP E[N]) = K(N\ff)$. 
\item[(ii)] There is a $K(N\ff)$-rational cyclic $N$-isogeny $\varphi: E \ra E'$, where $E'$ is an $N^2 \Delta$-CM elliptic curve.
\end{itemize}
\item[b)] For every $\Delta$-CM elliptic curve $E_{/K(N\ff)}$, there is an elliptic curve $E_0{/K(N\ff)}$ with $j(E_0) = j(E)$ and 
such that $E_0$ satisfies the equivalent conditions of part a).  Moreover, an elliptic curve $E'_{/K(N\ff)}$ with $j(E') = j(E)$ 
satisfies the equivalent conditions of part a) if and only if $E'$ is a quadratic twist of $E_0$.  
\end{itemize}
\end{prop}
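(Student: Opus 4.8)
The plan is to work over $F \coloneqq K(N\ff)$ throughout, using that $K \subseteq F$ (ring class fields contain the Hilbert class field, which contains $K$), so that the mod-$N$ Galois representation of any $\Delta$-CM elliptic curve over $F$ commutes with the CM action. Write $\OO \coloneqq \OO(\Delta)$ and $R \coloneqq \OO/N\OO$. Over $\overline{F}$ one has $E[N] \cong R$ as an $R$-module (free of rank one), so $\rho_N(\gg_F) \subseteq \Aut_R(E[N]) = R^\times$, and condition (i) of part a) says exactly that $\rho_N(\gg_F)$ lies in the scalar subgroup $(\Z/N\Z)^\times \subseteq R^\times$. The structural dictionary I will invoke is: cyclic order-$N$ subgroups $C$ of $E$ correspond to cyclic order-$N$ subgroups of $E[N]$; the isogeny $\varphi\colon E \to E/C$ is $F$-rational iff $C$ is $\gg_F$-stable; and $E/C$ is an $N^2\Delta$-CM elliptic curve iff $C = (\Z/N\Z)c$ for some $c \in R^\times$ (a \emph{generator} of $E[N]$ as an $R$-module). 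The last equivalence is the structural input: for prime-power $N$ it is the isogeny-graph bookkeeping described above (cf.\ \cite{Sutherland12}) -- the ``maximally descending'' cyclic $N$-kernels are exactly the ones spanned by a unit of $R$ -- and the general case follows from this by the Chinese Remainder Theorem (or a direct lattice computation).

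Granting the dictionary, part a) is short. For (i) $\Rightarrow$ (ii) choose any $C = (\Z/N\Z)c$ with $c \in R^\times$; by (i) every cyclic $N$-subgroup of $E$ is $\gg_F$-stable, so $\varphi \colon E \to E/C$ is an $F$-rational cyclic $N$-isogeny, and $E/C$ is $N^2\Delta$-CM. For (ii) $\Rightarrow$ (i): the dictionary forces $\ker\varphi = (\Z/N\Z)c$ with $c \in R^\times$, and the stabilizer of such a line in $R^\times$ is exactly the scalars -- if $uc = ac$ with $u \in R^\times$, $a \in \Z/N\Z$, then $(u - a)c = 0$, so $u = a$ because $c$ is not a zero-divisor. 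As $\rho_N(\gg_F) \subseteq R^\times$ fixes $\ker\varphi$, we get $\rho_N(\gg_F) \subseteq (\Z/N\Z)^\times$, which is (i).

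For part b), I first produce $E_0$. Fix a $K(N\ff)$-rational model $B$ of the $N^2\Delta$-CM curve $\C/\OO(N^2\Delta)$. The inclusion of lattices $\OO(N^2\Delta) \subseteq \OO(\Delta)$ induces a cyclic degree-$N$ isogeny $\C/\OO(N^2\Delta) \to \C/\OO(\Delta)$ whose field of moduli over $K$ is contained in $K(N\ff)$ (by the computation of fields of moduli of cyclic isogenies, cf.\ Theorem~\ref{THM4.1} and its composite-level consequences). Hence some model $\lambda_0 \colon B_0 \to A_0$ of it is defined over $F$, and the dual $\widehat{\lambda_0} \colon A_0 \to B_0$ is an $F$-rational cyclic $N$-isogeny from the $\Delta$-CM curve $A_0$ to the $N^2\Delta$-CM curve $B_0$; so $E_0 \coloneqq A_0$ satisfies condition (ii), hence (i). For the ``moreover'', put $w_\Delta \coloneqq \#\OO^\times$ and note $\mu_{w_\Delta} \subseteq K \subseteq F$, so every $F$-model $E'$ with $j(E') = j(E)$ is a twist of $E_0$ by a homomorphism $\xi \colon \gg_F \to \mu_{w_\Delta} = \Aut(\overline{E_0})$. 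Fixing an $\overline{F}$-isomorphism $E_0 \to E'$ (harmless, since $R^\times$ is abelian) gives $\rho_N^{E'}(\gg_F) = \xi(\gg_F) \cdot \rho_N^{E_0}(\gg_F)$, where $\mu_{w_\Delta}$ acts on $E_0[N]$ through $\OO^\times \subseteq R^\times$. By part a) we have $\rho_N^{E_0}(\gg_F) \subseteq (\Z/N\Z)^\times$, so $E'$ satisfies (i) iff $\xi(\gg_F) \subseteq \mu_{w_\Delta} \cap (\Z/N\Z)^\times$. A case check -- $\OO^\times$ is one of $\{\pm 1\}$, $\Z[i]^\times$, $\Z[\zeta_6]^\times$, and no nontrivial root of unity of $\OO$ is congruent modulo $N\OO$ to a rational integer -- shows $\mu_{w_\Delta} \cap (\Z/N\Z)^\times = \{\pm 1\}$; hence $E'$ satisfies (i) iff $\xi$ is valued in $\mu_2$, i.e.\ iff $E'$ is a quadratic twist of $E_0$. (When $\Delta < -4$ every $F$-model is such a twist, recovering the model-independence used in \cite{Clark22a}.)

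The hard part is the structural dictionary from the first paragraph: precisely identifying, for composite $N$ and possibly non-maximal $\OO$ (especially when $\gcd(N,\ff) > 1$), that $E/C$ has CM conductor $N\ff$ exactly when $C$ is spanned by a unit of $R$. Everything downstream -- the scalar-stabilizer computation and the twisting analysis -- is then formal, the only further inputs being that $R^\times$ is abelian and that $\OO^\times$ meets the scalars of $R$ only in $\{\pm 1\}$.
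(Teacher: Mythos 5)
Your proof is correct, and for part a) together with the existence half of part b) it runs essentially parallel to the paper's: the paper builds $E_0$ in exactly the same way (take a $K(N\ff)$-rational model of the canonical ascending isogeny $\psi: \C/\OO(N^2\Delta) \ra \C/\OO(\Delta)$, which is defined over $\Q(N\ff)$ because it is unique, and dualize), and your (i) $\Rightarrow$ (ii) is the paper's argument with the canonical kernel replaced by an arbitrary unit-spanned one. Where the paper disposes of (ii) $\Rightarrow$ (i) by citing \cite[Prop.~4.5]{BCI}, you unpack that citation into the module-theoretic statement that the $R^{\times}$-stabilizer of a line $(\Z/N\Z)c$ with $c \in R^{\times}$ consists exactly of the scalars; that computation is fine, but it rests on the ``structural dictionary'' ($E/C$ has CM conductor $N\ff$ if and only if $C$ is spanned by a unit of $R = \OO/N\OO$) which you rightly flag as the real content and do not prove. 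The dictionary is true and is precisely what underlies the cited result; the ``if'' direction has a short lattice verification (one checks directly that the stabilizer of $\OO + \Z\frac{\gamma}{N}$ is $\Z + N\OO = \OO(N^2\Delta)$ when $\gamma$ is a unit modulo $N\OO$), but the ``only if'' direction --- which is what your (ii) $\Rightarrow$ (i) actually invokes --- deserves at least a precise citation rather than a gesture at the Chinese Remainder Theorem. The genuinely different step is your uniqueness argument in b): the paper deduces it from the uniqueness up to quadratic twist of the $K(N\ff)$-model of the $N^2\Delta$-CM target (valid since $N^2\Delta < -4$) together with the compatibility of quadratic twisting with quotients by rational kernels, whereas you run a cocycle computation on the mod-$N$ representation and reduce to the arithmetic fact that no unit of $\OO$ other than $\pm 1$ is congruent to a rational integer modulo $N\OO$ once $N \geq 2$. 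Your route makes explicit the role of $\Aut(E) = \mu_{w_{\Delta}}$ (it is exactly the mechanism behind the index bound $w_{\Delta}/2$ in Theorem \ref{BIGPROJTHM}b)), at the cost of the extra case check on roots of unity; the paper's version is shorter because it pushes all twisting bookkeeping onto the target curve, whose automorphism group is $\{\pm 1\}$ automatically.
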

\begin{proof}
As usual, it is no loss of generality to assume that $j(E) = j_{\Delta}$.  \\
a) The implication (ii) $\implies$ (i) follows from \cite[Prop. 4.5]{BCI} and its proof.  As for (i) $\implies$ (ii), we may take $\varphi$ to 
be the dual of the isogeny $\psi: \C/\OO(N^2 \Delta) \ra \C/\OO(\Delta)$, which because of (i) must be $K(N\ff)$-rational on $E$. \\
b) The isogeny $\psi$ is defined over $\Q(N\ff)$, hence also over $K(N\ff)$.  Since $N^2 \Delta < -4$, the $K(N\ff)$-rational 
model of an elliptic curve with $j$-invariant $j_{N^2 \Delta}$ is unique up to quadratic twist.  If $F$ is a field of characteristic different 
from $2$, $\psi: E_1 \ra E_2$ is an $F$-rational isogeny with kernel $C$, and $d \in F^{\times}/F^{\times 2}$, then $C$ 
remains $F$-rational on the quadratic twist $E_1^d$ and we have $E_1^d/C \cong_F E_2^d$.  This shows that the elliptic curve 
$E_0$ of part b) exists and is unique up to quadratic twist; finally, quadratic twists do not change rationality of isogenies hence 
do not change projective torsion fields. 
\end{proof}
\noindent
For an imaginary quadratic discriminant $\Delta$, let $\OO(\Delta)$ be the imaginary quadratic order of discriminant $\Delta$ and let 
\[ w_{\Delta} \coloneqq \# \OO(\Delta)^{\times}. \]

\begin{thm}
\label{BIGPROJTHM}
Let $\Delta = \ff^2 \Delta_K$ be an imaginary quadratic discriminant, and let $N \geq 3$.
\begin{itemize}
\item[a)] Let $P \in X_0(N,N)$ be a $\Delta$-CM closed point.  Then $\Q(P) = K(N\ff)$.
\item[b)] Let $F$ be a field of characteristic $0$, and let $E_{/F}$ be a $\Delta$-CM elliptic curve.  
Then $F(\P E[N]) \supseteq K(N\ff)$ and $[F(\P E[N]):F K(N\ff)] \mid \frac{ \# w_{\Delta}}{2}$.
\end{itemize}
\end{thm}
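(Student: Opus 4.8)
The plan is to prove (a) first, using Proposition~\ref{REFINEDBC4.5}, and then to deduce (b) from (a) together with a twisting computation. So fix the $\Delta$-CM closed point $P$ on $X_0(N,N)$, and let $C, D$ be complementary cyclic order $N$ subgroups realizing it, so $E[N] = C \oplus D$ for the underlying $\Delta$-CM curve $E$. The inclusion $\Q(P) \hookrightarrow K(N\ff)$ is the easy half: by Proposition~\ref{REFINEDBC4.5}(b) there is a model $E_0$ over $K(N\ff)$ with $K(N\ff)(\PP E_0[N]) = K(N\ff)$, that is, with \emph{every} cyclic order $N$ subgroup $\gg_{K(N\ff)}$-stable, so $(E_0, C, D)$ is defined over $K(N\ff)$ and $\Q(P)$ embeds into $K(N\ff)$. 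The substance of (a) is the reverse inclusion $K(N\ff) \subseteq \Q(P)$, and I expect this to be the main obstacle.

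The difficulty is that the ``fiber product'' description of the CM fiber of $X_0(N)\to X(1)$ exploited in \cite{Clark22a} (its Prop.~3.5) breaks down over $J_0$ and $J_{1728}$, so $\Q(P)$ cannot be assembled prime-by-prime from the results of Section~\ref{CM_points_prime_power_section}. Taking a model of $(E,C,D)$ over $\Q(P)$, the isogenies $\varphi_C\colon E\to E/C$, $\varphi_D\colon E\to E/D$, and their composites --- for instance $E\to E/C\to (E/C)/\overline{D}\cong E$ --- are all $\Q(P)$-rational, and each forces a rational ring class field into $\Q(P)$; the idea is to combine these using the tensor-product identities of Proposition~\ref{SAIAPROP2} to recover $\Q(N\ff)$, and then to note that $\Q(P)$ is not a real field, whence $K(N\ff) = K\cdot\Q(N\ff) \subseteq \Q(P)$. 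To make the ring-class-field bookkeeping work when $j_\Delta\in\{0,1728\}$, I would adopt the strategy sketched as ``Step~2a'': use the Atkin--Lehner involution $w_N$ to move a given isogeny to one whose source or target has CM conductor divisible by all of $N\ff$, reducing either to the $\Delta_K<-4$ case of \cite{Clark22a} or to a \emph{proper} isogeny between curves with CM by $\Z_K$, where one can reason directly with $\Z_K$-ideals (using $h(\Z_K)=1$). Equivalently, one can compute $K\cdot\Q(P)$ directly as the field of moduli over $K$ of $(E,C,D)$ by a Shimura-reciprocity argument controlled by Proposition~\ref{SAIAPROP1}; the analysis must in particular handle pairs $(C,D)$ in which one or both subgroups are $\Z_K$-submodules of $E[N]$ --- which by themselves impose no constraint --- and see that the condition $C\oplus D = E[N]$ together with $\End E = \OO(\Delta)$ nevertheless pins the field down to $K(N\ff)$. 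Either way, everything ultimately rests on the path analysis on $\mathcal{G}_{K,\ell,\ff_0}$ and its double covers from Sections~\ref{paths_section} and \ref{action_of_conjugation_section}, with $(\Delta_K,\ell)\in\{(-4,2),(-3,3)\}$ the delicate cases.

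Granting (a), part (b) is short. Put $L\coloneqq F(\PP E[N])$. Since every cyclic $N$-isogeny with source $E$ is defined over $L$, choosing any cyclic order $N$ subgroup $C$ and any complement $D$ yields a triple $(E_L,C,D)$ over $L$, whose associated closed point $P'$ on $X_0(N,N)$ is again $\Delta$-CM with the same $\ff$, hence $\Q(P') = K(N\ff)$ by (a); as the field of moduli of $(E,C,D)$ over $F$ is $F\cdot\Q(P') = FK(N\ff)$ and lies in any field of definition, $FK(N\ff)\subseteq L$. For the index bound, work over $F'\coloneqq FK(N\ff)$: since $F'\supseteq K$ contains $\OO(\Delta)^\times$, Proposition~\ref{REFINEDBC4.5}(b) says $E_{F'}$ is the twist of $(E_0)_{F'}$ by a homomorphism $\chi\colon\gg_{F'}\to\OO(\Delta)^\times = \Aut E$, and because $(E_0)_{F'}$ has $F'(\PP (E_0)_{F'}[N]) = F'$ one gets $\overline{\rho_N}|_{\gg_{F'}} = \overline{\chi}$, the image of $\chi$ under $\OO(\Delta)^\times\hookrightarrow\Aut E[N]\to\PGL_2(\Z/N\Z)$. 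A direct check shows that for $N\geq 3$ the kernel of $\OO(\Delta)^\times\to\PGL_2(\Z/N\Z)$ is exactly $\{\pm 1\}$ (no unit of order $4$ or $6$ acts as a scalar modulo $N$ for $N\geq 3$), so $\overline{\chi}$ is valued in a cyclic group of order $w_\Delta/2$ and $[F(\PP E[N]):FK(N\ff)] = \#\overline{\rho_N}(\gg_{F'})$ divides $w_\Delta/2$. The one case calling for extra care, as flagged in the introduction, is $\Delta = -4$, where the quartic-twist character can realize the index $2$ and the behavior genuinely differs from $\Delta = -3$ and from $\Delta < -4$.
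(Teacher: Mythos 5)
Part (b) and the containment $\Q(P) \hookrightarrow K(N\ff)$ in part (a) are essentially right and match the paper. The gap is in the reverse containment $K(N\ff) \subseteq \Q(P)$, and it begins with your moduli interpretation of $X_0(N,N)$. In this paper's conventions the subgroup $H_0(N,N) \subseteq \GL_2(\Z/N\Z)/\{\pm 1\}$ defining $X_0(N,N)$ is the group of \emph{scalar} matrices (this is forced by Theorem \ref{X0_to_X1}: the covering $X(N) = X_1(N,N) \ra X_0(N,N)$ has degree $\phi(N)/2$, which is consistent only with the scalar subgroup), so a $\Delta$-CM point $P$ records a model of $E_{j_\Delta}$ whose \emph{entire} projective mod $N$ representation is trivial --- not merely an ordered pair of complementary cyclic subgroups $(C,D)$. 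The distinction is not cosmetic: under your reading the statement is false. Take $\ff = 1$, $N = \ell$ split as $\pp\overline{\pp}$, and $(C,D) = (E[\pp],E[\overline{\pp}])$; this triple is defined over the Hilbert class field $K(1)$, a proper subfield of $K(\ell)$ in general. You flag exactly this configuration (``one or both subgroups are $\Z_K$-submodules \ldots which by themselves impose no constraint'') and offer no mechanism to rule it out, because under the pair-of-subgroups interpretation there is none.

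With the correct interpretation the hard half is short and needs none of the machinery you invoke (Atkin--Lehner, Shimura reciprocity, the tensor-product identities, the path analysis on the double covers). Since $H_0(N,N)$ is normal, $X_0(N,N) \ra X(1)$ is a Galois covering, so all residue fields over $J_\Delta$ are isomorphic; and by \cite[Prop. VI.3.2]{Deligne-Rapoport73} the point $P$ is induced by an actual elliptic curve $E_{/\Q(P)}$ with trivial projective mod $N$ Galois representation. On such a model \emph{every} cyclic $N$-isogeny is $\Q(P)$-rational, in particular the dual of the canonical isogeny $\C/\OO(N^2\Delta) \ra \C/\OO(\Delta)$, whose target has $j$-invariant $j_{N^2\Delta}$; hence $\Q(N\ff) \subseteq \Q(P)$. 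Since $[K(N\ff):\Q(N\ff)] = 2$ and $\Q(P)$ embeds into $K(N\ff)$, either $\Q(P) \cong \Q(N\ff)$ or $\Q(P) = K(N\ff)$, and the former is excluded by the reality obstruction \cite[Cor. 5.4]{Clark22a} (a real elliptic curve has no real projective $N$-torsion field for $N \geq 3$) --- this last step you do have. Your part (b) is in substance the paper's argument: the twisting character valued in $\OO(\Delta)^{\times}$ over $FK(N\ff)$, together with the observation that its image in $\PGL_2(\Z/N\Z)$ has order dividing $w_\Delta/2$, is the same mechanism the paper runs through a cyclic extension $M/L$.
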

\begin{proof}Again we may assume without loss of generality that $j(P) = j_{\Delta}$. \\
a) By Proposition \ref{REFINEDBC4.5}, there is a $\Delta$-CM elliptic curve $E_{/K(N\ff)}$ on which the projective modulo $N$ Galois representation 
is trivial.  This elliptic curve induces a $\Delta$-CM closed point $P_0 \in X_0(N,N)$ such that $\Q(P_0)$ can be embedded into $K(N\ff)$.  
Moreover, in the notation of \cite[\S 1.1]{Clark22a}, the subgroup $H_0(N,N)$ of $\GL_2(\Z/N\Z)/\{\pm 1\}$ used to define the modular curve $X_0(N,N)$ is the subgroup of scalar matrices, which is normal, hence $X_0(N,N) \ra X(1)$ is a Galois covering of curves over $\Q$.  It follows that all residue fields of closed points of $X_0(N,N)$ lying over the closed point $J_{\Delta}$ of $X(1)$ are isomorphic.  It 
follows that $\Q(P)$ is isomorphic to a subfield of $K(N\ff)$.  By \cite[Prop. VI.3.2]{Deligne-Rapoport73} there is an elliptic curve 
$E_{/\Q(P)}$ inducing $P$ with trivial projective modulo $N$ Galois representation.  As in the proof of Proposition \ref{REFINEDBC4.5} 
we have a $\Q(P)$-rational isogeny $\varphi: E \ra E'$ with $j(E') = j_{N^2 \Delta}$, so $\Q(P)$ contains $\Q(N \ff)$.   Since $[K(N\ff):\Q(N\ff)] = 2$, we have 
either $\Q(P) = \Q(N\ff)$ or $\Q(P) = K(N\ff)$.  However, if $\Q(P) = \Q(N\ff)$, then since $\Q(N\ff) \subset \R$, we get a real elliiptic curve with 
real projective $N$-torsion field, contradicting \cite[Cor. 5.4]{Clark22a}.  \\
b) From part a) we know that $F(\P E[N]) \supseteq K(N\ff)$.  Consider the base 
extension of $E$ to $L \coloneqq FK(N\ff)$.  It follows from part a) that there is a character $\chi: \gg_L \ra \mu_{w_{\Delta}}$ 
such that the twist of $E_{/L}$ by $\chi$ has trivial projective mod $N$ Galois representation.   There is then a cyclic field extension $M/L$ of degree dividing $\frac{w_{\Delta}}{2}$ such that 
\[ \chi(\gg_M) \subset \{ \pm 1\}, \]
which means that there is a quadratic twist of $E_{/M}$ for which the projective mod $N$ Galois representation is trivial.  But quadratic 
twists do not affect the projective modulo $N$ Galois representation, so the projective Galois representation on $E_{/M}$ is 
trivial, and $[M:FK(N\ff)] \mid \frac{w_{\Delta}}{2}$.  
\end{proof}
\noindent
When $N = 2$, the projective $N$-torsion field of an elliptic curve $E_{/F}$ is just its $2$-torsion field $F(E[2])$.  Because of this the 
$N = 2$ analogue of Theorem \ref{BIGPROJTHM} had already been known, but for future reference we record the result anyway.

\begin{prop}
\label{Proj_Remark_leq2} 
\label{PROP6.2}
Let $\Delta = \ff^2 \Delta_K$ be an imaginary quadratic discriminant, let $F$ be a field of characteristic $0$, and let $E_{/F}$ be a $\Delta$-CM elliptic curve.  Let $P \in X_0(2,2)_{/\Q}$ be a $\Delta$-CM point.  Then:
\begin{itemize}
\item[a)] If $\Delta < -4$ is odd, then $\Q(P) = K(2\ff)$. 
\item[b)] If $\Delta < -4$ is even, then $\Q(P) \cong \Q(2\ff)$.
\item[c)] If $\Delta = -4$, then $\Q(P) = \Q = \Q(2\ff)$.
\item[d)] If $\Delta = -3$, then $\Q(P) = K = K(2\ff)$.  
\end{itemize}
\end{prop}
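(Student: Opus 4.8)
The plan is to reduce all four cases to finding the minimal field of definition of the full $2$-torsion of a $\Delta$-CM elliptic curve. Two observations set this up. Since $X_0(2,2) = X(2)$ and the scalar subgroup of $\GL_2(\Z/2\Z)$ is trivial (as $-1 \equiv 1 \bmod 2$), the projective mod-$2$ representation coincides with the mod-$2$ representation, so for every $E_{/F}$ we have $F(\P E[2]) = F(E[2])$, the splitting field of the $2$-division polynomial. Arguing exactly as in the proof of Theorem~\ref{BIGPROJTHM} --- i.e. combining the $N=2$ instance of Proposition~\ref{REFINEDBC4.5}, \cite[Prop.~VI.3.2]{Deligne-Rapoport73}, and the fact that $X_0(2,2) \ra X(1)$ is Galois (so all residue fields over $J_\Delta$ are isomorphic) --- one gets that $\Q(P)$ is isomorphic to the smallest field $F \supseteq \Q$ over which there is a $\Delta$-CM elliptic curve $E_{/F}$ with $E[2] \subseteq E(F)$. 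Thus in each case the task is to minimize $F(E[2])$ over all $\Delta$-CM models.

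\textbf{The cases $\Delta = -4$ and $\Delta = -3$.} For (c), every elliptic curve with $j$-invariant $1728$ has a model $y^2 = x^3 + ax$, whose $2$-torsion has $x$-coordinates $0, \pm\sqrt{-a}$, so $F(E[2]) = \Q(\sqrt{-a})$; choosing $a = -1$ realizes $F(E[2]) = \Q$, visibly the minimum. Since $\ff = 1$ and $\OO(-16)$ has class number $1$, we have $\Q(2\ff) = \Q(j_{-16}) = \Q$, which gives (c). For (d), every elliptic curve with $j$-invariant $0$ has a model $y^2 = x^3 + b$, whose $2$-torsion has $x$-coordinates the three cube roots of $-b$; these are cyclically permuted by $\zeta_3$, so any $F$ with $E[2] \subseteq E(F)$ contains $\Q(\zeta_3) = K$, and $b = -1$ realizes $F(E[2]) = K$ exactly. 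Since $\ff = 1$ and $\OO(-12)$ has class number $1$, $K(2\ff) = K(j_{-12}) = K$, which gives (d). The only point requiring a comment is that for $\Delta \in \{-3,-4\}$ the model is unique just up to sextic, resp. quartic, twist rather than quadratic twist; but the explicit shape of the $2$-torsion above makes the minimization over the full twist family immediate.

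\textbf{The cases $\Delta < -4$.} Here $w_\Delta = 2$, so by \cite[Prop.~V.2.2]{SilvermanII} the $\Q(j_\Delta)$-rational model of a $\Delta$-CM curve is unique up to quadratic twist. A quadratic twist by $d$ scales the roots of the depressed Weierstrass cubic by $d$ and hence does not change $F(E[2])$; therefore $F(E[2]) = \Q(j_\Delta)(E_0[2]) \eqqcolon L$ is independent of the model $E_0$, and any $F$ as in the first paragraph contains $\Q(j_\Delta)$ together with a quadratic twist of $E_0$, hence contains $L$. So $\Q(P) \cong L$, and it remains to identify $L$ as a ring class field. This is the $N=2$ analogue of \cite[Prop.~4.5]{BCI} (the reason the present proposition "had already been known"): one has $\Q(2\ff) \subseteq L \subseteq K(2\ff)$ --- the lower bound because $L$ contains the $j$-invariant of the $2$-isogenous curve $\C/\OO((2\ff)^2\Delta_K)$, the upper bound because for a model over $K(2\ff)$ all three $2$-isogenies on a $\Delta$-CM curve are defined over $K(2\ff)$ --- and the dichotomy $L \in \{\Q(2\ff), K(2\ff)\}$ then follows as in Theorem~\ref{BIGPROJTHM}. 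Whether $L$ contains $K$ is decided by the parity of $\Delta$: computing the discriminant of the $2$-division polynomial of the CM model $\C/\OO((2\ff)^2\Delta_K)$ shows its quadratic resolvent is $K$ precisely when $\Delta$ is odd, which yields $L = K(2\ff)$ in case (a) and $L \cong \Q(2\ff)$ in case (b).

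\textbf{Main obstacle.} Everything except the last step is either formal (the reduction of the first paragraph) or a direct computation with a standard Weierstrass family (parts (c) and (d)). The genuine content is pinning down, in the $\Delta < -4$ cases, exactly when the minimal $2$-torsion field contains $K$ --- equivalently, the parity criterion. In spirit this is the $N=2$ shadow of the reality obstruction \cite[Cor.~5.4]{Clark22a} that drives the $N \geq 3$ case of Theorem~\ref{BIGPROJTHM}; but for $N = 2$ real elliptic curves may have totally real $2$-torsion, so the clean argument of Theorem~\ref{BIGPROJTHM} does not apply verbatim and the criterion must instead be read off from the arithmetic of the $2$-division polynomial of a CM model (or imported from \cite{BCI}).
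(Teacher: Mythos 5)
Your proof is correct and follows essentially the same route as the paper: parts (c) and (d) are the same explicit Weierstrass computations ($y^2=x^3-x$ and $y^2=x^3+B$), and for $\Delta<-4$ both arguments come down to the known determination of the $2$-torsion field of a $\Delta$-CM elliptic curve, which the paper handles by citing \cite[Thm.~4.2]{BCS17} together with the observation that some order-$2$ quotient has $4\Delta$-CM (your lower bound $\Q(2\ff)\hookrightarrow L$). The one adjustment: the parity criterion you assert but do not prove (the quadratic resolvent of the $2$-division polynomial of a $\Delta$-CM model generates $K$ iff $\Delta$ is odd) is precisely the content of that cited theorem — the relevant reference is \cite{BCS17} rather than \cite{BCI} — so your final step rests on the same external input as the paper's, not on an independent argument.
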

\begin{proof} Again, because $X_0(2,2) = X(2) \ra X(1)$ is a Galois covering, all the residue fields of $P$ on $X_0(2,2)$ lying over 
the closed point $J_{\Delta}$ of $X(1)$ are isomorphic.  \\
a),b) Suppose $\Delta < -4$.   The results follow from \cite[Thm. 4.2]{BCS17} together with the observation that there is a point $P$ of order $2$ on $E(\overline{F})$ such that $E/\langle P \rangle$ has $4\Delta$-CM.  (They can also be obtained from an analysis of $2$-isogeny graphs.)  \\
c) The $-4$-CM elliptic curve $E: y^2 = x^3-x$ has $\Q(E[2]) = \Q$. \\
d) For all $B \in \Q^{\times}$, the curve 
\[ E_B: y^2 = x^3 + B \]
is a $-3$-CM elliptic curve with $\Q([E])2) = \Q(\zeta_3,B^{1/3})$.  Each of these fields contains $\Q(\zeta_3) = K$, so 
$\Q(P) \supseteq K$. Moreover $\Q(E_{1}) = K$, so $\Q(P) = K$.   
\end{proof}

\section{Primitive Residue Fields of CM points on $X_0(\ell^{a'},\ell^a)$}
\noindent
Let $\Delta = \ff^2 \Delta_K$ be an imaginary quadratic discriminant, let $\ell$ be a prime and let $0 \leq a' \leq a$ be integers. In this section we extend the work of \cite[\S 7]{Clark22a}, determining all primitive residue fields and degrees of $\Delta$-CM points on $X_0(\ell{a'}, \ell^a)$, to the cases in which $\Delta_K \in \{-3,-4\}$.
\\ \\
It is no loss of generality to assume that the $j$-invariant of our $\Delta$-CM elliptic curve is $j_{\Delta}$, and we shall do so throughout 
this section.
\\ \\
For $X(H)_{/\Q}$ a modular curve, we call the residue field $\mathbb{Q}(P)$ of a closed $\Delta$-CM point $P \in X(H)$ a \textbf{primitive residue field} of $\Delta$-CM points on $X(H)$ if there is no other $\Delta$-CM point $Q \in X(H)$ together with an embedding of the residue field $\Q(Q)$ into $\Q(P)$ as a proper subfield. We call the degree $d = [\mathbb{Q}(P) : \mathbb{Q}]$ a \textbf{primitive degree} of $\Delta$-CM points on $X(H)$ if there is no $\Delta$-CM point $Q \in X(H)$ such that $[\mathbb{Q}(Q) : \mathbb{Q}]$ properly divides $d$. 
\\ \\
Throughout this section, when working with a $\Delta = \ff^2\Delta_K$-CM point we put 
\[L \coloneqq \ord_{\ell}(\ff). \]

\subsection{$X_0(\ell^a)$}
In the case of $a' = 0$, i.e., of $X_0(\ell^a)$, our results of \S \ref{CM_points_prime_power_section} imply immediately that the case analysis is exactly as in \cite[\S 8.1]{Clark22a}. We recall the answer here for completeness:
\\ \\
\textbf{Case 1.1}: Suppose $\ell^a = 2$. \\
\textbf{Case 1.1a}: Suppose  $\left(\frac{\Delta}{2} \right) \neq -1$.  The primitive residue field is $\Q(\ff)$ (which equals $\Q(2\ff)$ 
when $\left( \frac{\Delta}{2} \right)  = 1$).  \\
\textbf{Case 1.1b}: Suppose $\left( \frac{\Delta}{2} \right) = -1$.  The primitive residue field is $\Q(2\ff)$.  
\\  
\textbf{Case 1.2}: Suppose $\ell^a > 2$ and $\left( \frac{\Delta}{\ell} \right) = 1$.  The primitive residue fields 
are $\Q(\ell^a \ff)$ and $K(\ff)$.  
\\ 
\textbf{Case 1.3}: Suppose $\ell^a > 2$ and $\left( \frac{\Delta}{\ell} \right) = -1$.  The primitive residue field is $\Q(\ell^a \ff)$.  
\\ 
\textbf{Case 1.4}: Suppose $\ell^a > 2$, $\left(\frac{\Delta}{\ell}\right) = 0$ and $L = 0$.  The primitive residue 
field is $\Q(\ell^{a-1} \ff)$.  
\\  
\textbf{Case 1.5}: Suppose $\ell > 2$, $L \geq 1$ and $\left( \frac{\Delta_K}{\ell} \right) = 1$.  \\
\textbf{Case 1.5a}: Suppose $a \leq 2L$.  In this case there is a $\Q(\ff)$-rational cyclic $\ell^a$-isogeny, so the only primitive 
residue field is $\Q(\ff)$.  
\\
\textbf{Case 1.5b}: Suppose $a > 2L$.  Then the primitive residue fields are $\Q(\ell^{a-2L} \ff)$ and $K(\ff)$.  
\\ 
\textbf{Case 1.6}: Suppose $\ell > 2$, $L \geq 1$, and $\left( \frac{\Delta_K}{\ell} \right) = -1$. \\
\textbf{Case 1.6a}: Suppose $a \leq 2L$.  As in Case 1.5a, there is a $\Q(\ff)$-rational cyclic $\ell^a$-isogeny, so the only primitive 
residue field is $\Q(\ff)$.  
\\
\textbf{Case 1.6b}: Suppose $a > 2L$.  In this case the primitive residue field is $\Q(\ell^{a-2L} \ff)$.  
\\ 
\textbf{Case 1.7}: Suppose $\ell > 2$, $L \geq 1$, $\left( \frac{\Delta_K}{\ell} \right) = 0$. \\
\textbf{Case 1.7a}: Suppose $a \leq 2L+1$. As in Case 1.5a, there is a $\Q(\ff)$-rational cyclic $\ell^a$-isogeny, so the only primitive 
residue field is $\Q(\ff)$.  \\
\textbf{Case 1.7b}: Suppose $a \geq 2L+2$.  In this case the primitive residue field is $\Q(\ell^{a-2L-1} \ff)$.  
\\  
\textbf{Case 1.8}: Suppose $\ell = 2$, $a \geq 2$, $L \geq 1$, and $\left( \frac{\Delta_K}{2} \right) = 1$. \\
\textbf{Case 1.8a}: Suppose $L = 1$.  The primitive residue fields are $\Q(2^a \ff)$ and $K(\ff)$.  
\\
\textbf{Case 1.8b}: Suppose $L \geq 2$ and $a \leq 2L-2$.  The primitive residue field is $\Q(\ff)$.  
\\
\textbf{Case 1.8c}: Suppose $L \geq 2$ and $a \geq 2L-1$.  The primitive residue fields are $\Q(2^{a-2L+2} \ff)$ and $K(\ff)$.  
\\  
\textbf{Case 1.9}: Suppose $\ell = 2$, $a \geq 2$, $L \geq 1$, and $\left( \frac{\Delta_K}{2} \right) = -1$. \\
\textbf{Case 1.9a}: Suppose $L = 1$.  The primitive residue fields are $\Q(2^a \ff)$ and $K(2^{a-2} \ff)$.  
\\
\textbf{Case 1.9b}: Suppose $L \geq 2$ and $a \leq 2L-2$.  The primitive residue field is $\Q(\ff)$.  
\\
\textbf{Case 1.9c}: Suppose $L \geq 2$ and $a \geq 2L-1$.  The primitive residue fields are $\Q(2^{a-2L+2} \ff)$ and $K(2^{\max(a-2L,0)}\ff)$.  
\\ 
\textbf{Case 1.10}: Suppose $\ell = 2$, $a \geq 2$, $L \geq 1$, $\left( \frac{\Delta_K}{2} \right) = 0$, and $\ord_2(\Delta_K) = 2$.  
\\
\textbf{Case 1.10a}: Suppose $a \leq 2L$.  The primitive residue field is $\Q(\ff)$.  
\\
\textbf{Case 1.10b}: Suppose $a \geq 2L+1$.  The primitive residue fields are $\Q(2^{a-2L} \ff)$ and $K(2^{a-2L-1} \ff)$.  
\\  
\textbf{Case 1.11}: Suppose $\ell = 2$, $a \geq 2$, $L \geq 1$,  $\left( \frac{\Delta_K}{2} \right) = 0$, and $\ord_2(\Delta_K) = 3$.  
\\
\textbf{Case 1.11a}: Suppose $a \leq 2L+1$.  The primitive residue field is $\Q(\ff)$.  
\\
\textbf{Case 1.11b}: Suppose $a \geq 2L+2$.  The primitive residue field is $\Q(2^{a-2L-1} \ff)$.  

\subsection{A field of moduli result} Now suppose that $1 \leq a' \leq a$ are integers, and let 
 $P \in X_0(\ell^{a'},\ell^a)$ be a 
closed $\Delta$-CM point. Then we have morphisms 
\[ \alpha: X_0(\ell^{a'},\ell^a) \ra X_0(\ell^{a'},\ell^{a'}) \text{ and } \beta: X_0(\ell^{a'},\ell^a) \ra X_0(\ell^a). \]

\begin{thm}
\label{FOMTHM}
Let $\ell$ be a prime number, let $1 \leq a' \leq a$ be positive integers and let $P \in X_0(\ell^{a'},\ell^a)$ be a closed $\Delta$-CM point.  
\begin{itemize}
\item[a)] We have 
\[ \Q( \alpha(P),\beta(P)) \subseteq \Q(P) \subseteq K(\alpha(P),\beta(P)). \]
\item[b)] We have $\Q(P) = \Q(\alpha(P),\beta(P))$ if \emph{any} of the following conditions holds:
\begin{itemize}
\item[(i)] $\Delta \neq -4$. 
\item[(ii)] $\ell^{a'} \geq 3$. 
\item[(iii)] $a = 1$.
\end{itemize}
\end{itemize}
\end{thm}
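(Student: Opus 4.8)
We first dispose of the lower bound in (a): since $\alpha$ and $\beta$ are morphisms defined over $\Q$, the residue fields $\Q(\alpha(P))$ and $\Q(\beta(P))$ are (canonically) subfields of $\Q(P)$, hence so is their compositum $\Q(\alpha(P),\beta(P))$. For the upper bound put $L \coloneqq K(\alpha(P),\beta(P))$, and observe first that $L = K \cdot \Q(\alpha(P)) \cdot \Q(\beta(P)) = \Q(\beta(P)) \cdot K(\ell^{a'}\ff)$: indeed $K \cdot \Q(\alpha(P)) = K(\ell^{a'}\ff)$ in all cases, this being Theorem \ref{BIGPROJTHM}a) when $\ell^{a'} \geq 3$ and following from Proposition \ref{PROP6.2} when $\ell^{a'} = 2$ (using that the rational ring class field $\Q(2\ff)$ is real, so $[K(2\ff):\Q(2\ff)] = 2$). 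Since $\Q(\beta(P)) \subseteq L$ we may fix a model $(E_0,C_0)$ over $\Q(\beta(P))$ inducing $\beta(P)$. Lifting $\beta(P)$ to a point of $X_0(\ell^{a'},\ell^a)$ amounts to equipping $(E_0,C_0)$ with a scalar level-$\ell^{a'}$ structure, on which $\gg_{\Q(\beta(P))}$ acts through $\rho_{\ell^{a'}}$; so, as in Step 3 of the introduction, the upper bound follows once we know that $(E_0)_{/L}$ carries such a structure, i.e. that $\Q(\beta(P))(\P E_0[\ell^{a'}]) \subseteq L$.

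By Theorem \ref{BIGPROJTHM}b) we have $\Q(\beta(P))(\P E_0[\ell^{a'}]) \supseteq \Q(\beta(P))K(\ell^{a'}\ff) = L$ with index dividing $w_{\Delta}/2$, so the content of (a) is that this index equals $1$ for a suitable $E_0$. If $C_0$ is not stable under $\End(E_0)^\times$ — the generic situation — then $\Aut(E_0,C_0) = \{\pm 1\}$, every model of $\beta(P)$ over $\Q(\beta(P))$ is a quadratic twist of $E_0$, and $\Q(\beta(P))(\P E_0[\ell^{a'}])$ is thus a well-defined invariant of $\beta(P)$; it equals $L$ automatically (index dividing $w_{\Delta}/2 = 1$) when $\Delta < -4$, and when $\Delta \in \{-3,-4\}$ one reads off the same conclusion from the structure of $\mathcal{G}_{K,\ell,1}$ and the complex-conjugation action on it from Sections 3--4 — for example, when $\ell = 2$ and $\Delta_K = -4$ one uses that $E_0[\pp]$ (for $\pp$ the prime of $\Z_K$ over $2$) is always $\Q(\beta(P))$-rational, so the mod-$2$ representation is upper triangular, and a short $2$-isogeny computation identifies the relevant quadratic extension with $\Q(\beta(P))(\sqrt{-1})$. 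If instead $C_0$ \emph{is} $\End(E_0)^\times$-stable, the isogeny inducing $\beta(P)$ is proper in the sense of \cite[\S 3.4]{Clark22a}, and $\Aut(E_0,C_0) = \End(E_0)^\times$ supplies exactly the twists needed to scalarize $\rho_{\ell^{a'}}$ while keeping $C_0$ rational; one concludes by reasoning with $\Z_K$-ideals as in loc. cit. This completes (a).

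For (b) it suffices, by (a), to prove $\Q(P) = \Q(\alpha(P),\beta(P))$, and for this it is enough either that $K \subseteq \Q(\alpha(P),\beta(P))$ — which collapses the interval in (a) — or, where that fails, to produce the equality directly. If $\ell^{a'} \geq 3$ (case (ii)) then $\Q(\alpha(P)) = K(\ell^{a'}\ff) \supseteq K$ by Theorem \ref{BIGPROJTHM}a). If $a = 1$ (case (iii)) then $a' = a$, so $\alpha$ is the identity map and $\Q(P) = \Q(\alpha(P)) \subseteq \Q(\alpha(P),\beta(P))$. In case (i) we may assume $\ell^{a'} = 2$, else we are in case (ii); if $\Delta = -3$ then $\ff = 1$ and $\Q(\alpha(P)) = K$ by Proposition \ref{PROP6.2}d), again forcing the collapse, while if $\Delta = \ff^2\Delta_K < -4$ then $\ff \geq 2$, $\Aut(E) = \{\pm 1\}$, the projective $2$-torsion field is model-independent, and the argument of \cite{Clark22a} (Step 3 of the $\Delta_K < -4$ case) carries over verbatim to give $\Q(P) = \Q(\beta(P))\Q(2\ff) = \Q(\alpha(P),\beta(P))$.

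The main obstacle is the index-$1$ assertion underpinning (a) when $\Delta \in \{-3,-4\}$: there the projective $\ell^{a'}$-torsion field must be pinned down over ground fields that need not contain $K$, and we see no way to do this except by feeding in the explicit shape of $\mathcal{G}_{K,\ell,1}$ from Section 3 and its complex-conjugation action from Section 4 case by case. The case $\Delta = -4$, $\ell^{a'} = 2$, $a \geq 2$ — excluded from (b) precisely because $\Q(\alpha(P)) = \Q$ does not contain $K$ — is the most delicate: (a) only gives $[\Q(P):\Q(\beta(P))] \leq 2$, and deciding which value occurs rests on the rationality of $E_0[\pp]$ together with the $2$-isogeny computation indicated above.
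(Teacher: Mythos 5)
Your treatment of part b) is essentially the paper's: cases (ii) and (iii) are handled identically, and case (i) splits into $\Delta<-4$ (model-independence of the projective torsion field) and $\Delta=-3$ (where $\Q(\alpha(P))\supseteq K$ forces the interval in part a) to collapse). The problem is that everything rests on the upper bound $\Q(P)\subseteq K(\alpha(P),\beta(P))$ of part a), and for $\Delta\in\{-3,-4\}$ your argument for that bound has a genuine gap. You fix a model $(E_0,C_0)$ over $\Q(\beta(P))$ and reduce to showing $\Q(\beta(P))(\P E_0[\ell^{a'}])\subseteq L$, i.e.\ that the index in Theorem \ref{BIGPROJTHM}b) is $1$ for a suitable model over a ground field that need not contain $K$. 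You never prove this: the appeal to ``the structure of $\mathcal{G}_{K,\ell,1}$ and the complex-conjugation action, case by case'' is a gesture rather than an argument, the one case you sketch ($\Delta=-4$, $\ell=2$) is itself incomplete, and you explicitly flag the index-$1$ assertion as the unresolved main obstacle. Note also that your framing is off when $\ell^{a'}=2$: Theorem \ref{BIGPROJTHM} is stated only for $N\geq 3$, and for $N=2$, $\Delta=-4$ the containment $F(\P E[2])\supseteq K(2\ff)$ can simply fail (e.g.\ $y^2=x^3-x$ has $\Q(E[2])=\Q\not\supseteq K$), so ``index dividing $w_\Delta/2$, need index $1$'' is not the right dichotomy there.

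The paper sidesteps this entirely by reversing the order of quantification: instead of fixing a model over $\Q(\beta(P))$ adapted to $C_0$ and then fighting to control its projective $\ell^{a'}$-torsion field, it works from the outset over $K(\alpha(P),\beta(P))=K(\ell^{\overline{L}})$ (which contains $K(\ell^{a'}\ff)$), invokes Proposition \ref{REFINEDBC4.5} to choose a model whose mod $\ell^{\overline{L}}$ representation is scalar --- so every cyclic $\ell^{a'}$-subgroup is automatically rational --- and then shows that the cyclic $\ell^a$-isogeny inducing $\beta(P)$ is also rational on \emph{that} model. The only nontrivial point is the horizontal part of the factorization $\varphi_d\circ\varphi_h$, which is handled by observing that, since $\Z_K$ has class number one and the ground field contains $K$, $\varphi_h$ is (up to an isomorphism on the target) a rationally defined endomorphism, so its target is rationally isomorphic to its source and the scalar-representation property transfers to it. If you want to salvage your route, you would need to prove the index-$1$ statement over $\Q(\beta(P))$ for each $\Delta\in\{-3,-4\}$ and each $\ell^{a'}$, which is strictly harder than what the theorem requires; I recommend instead adopting the change of base field.
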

\begin{proof}
Since $\Q(P)$ is an extension of both $\Q(\alpha(P))$ and $\Q(\beta(P))$, clearly \[\Q(P) \supseteq \Q(\alpha(P),\beta(P)). \]
If $\Delta < -4$, then conversely $\Q(P) \subseteq \Q(\alpha(P),\beta(P))$: indeed, in this case, the projective torsion field 
is independent of the model.   So we may suppose $\Delta \in \{-3,-4\}$. \\
Step 2: We will show that 
\begin{equation}
\label{FOMTHMEQ}
 K(\alpha(P),\beta(P)) = K(\ell^{a'} \ff)K(\beta(P)). 
\end{equation}
Since $\Delta \in \{-3,-4\}$, the point $\beta(P) \in X_0(\ell^a)$ corresponds to a path of length $a$ 
with initial vertex at the surface; if the terminal vertex has level $L'$, then $K(\beta(P)) = K(\ell^{L'})$, so if we put 
\[ \overline{L} \coloneqq \max(a',L'), \]
then 
\[K(\alpha(P),\beta(P)) = K(\ell^{\overline{L}}). \]
We may factor an isogeny inducing $\beta(P)$ as 
$\varphi_d \circ \varphi_h$ where $\varphi_h$ is horizontal and $\varphi_d$ is descending of length $L'$.  By the results of \S 6, there is an 
elliptic curve $E_{/K(\ell^{\overline{L}})}$ with $j$-invariant $j_{\Delta}$ on which the modulo $\ell^{\overline{L}}$ Galois representation 
is given by scalar matrices.  The point $\beta(P)$ is induced by a cyclic $\ell^a$-isogeny of $\C$-elliptic curves $\varphi: E \ra E'$; 
since $\Delta = \Delta_K$, this isogeny factors as $\varphi_d \circ \varphi_h$, where $\varphi_h: E \ra E''$ is horizontal of degree $\ell^{a-L'}$ and $\varphi_d: E'' \ra E'$ is descending of degree $\ell^{L'}$.  We claim that every isogeny of this form is defined over $K(\ell^{\overline{L}})$. Indeed, as in the proof of Theorem \ref{THM4.1} we have that $\varphi_h$ is defined over $K(\ell^{\overline{L}})$ 
and moreover $E'' \cong_{K(\ell^{\overline{L}})} E$.  It follows that the modulo $\ell^{L'}$-Galois representation on $E''$ is given by 
scalar matrices, so $\varphi_d$ is also defined over $K(\ell^{\overline{L}})$ and thus $\varphi$ is as well.  \\
Step 3:  By Theorem \ref{BIGPROJTHM} and Proposition \ref{PROP6.2} we have that $\Q(\alpha(P)) = K(\ell^{a'} \ff)$ if either $\ell^{a'} \geq 3$ or $\Delta$ is odd.  Thus in either of these cases we have 
\begin{equation}
\label{FOMCONTAINSKEQ}
 \Q(P) = K(\ell^{a'} \ff) \Q(\beta(P)).  
\end{equation}
Step 4: Finally, if $a = 1$ then $\alpha: X_0(2,2) \ra X_0(2)$ is the identity map, so $\Q(P) = \Q(\alpha(P))$ holds trivially.
\end{proof}
\noindent

\begin{cor}
\label{FOMCOR}
Let $a \geq 2$, and let $P \in X_0(2,2^a)$ be a $-4$-CM point.  Let $\varphi: E \ra E'$ be an isogeny of complex elliptic curves 
inducing $\beta(P) \in X_0(2^a)$.
\begin{itemize}
\item[a)] If $\varphi$ is purely descending, then $\Q(P) = \Q(\beta(P)) \cong \Q(2^a)$.  
\item[b)] Otherwise, $\Q(P) = K(\beta(P)) = K(2^{a-1})$.
\end{itemize}
\end{cor}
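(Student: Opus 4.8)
The plan is to pin $\Q(P)$ between the bounds supplied by Theorem \ref{FOMTHM}a and then to resolve the resulting dichotomy with explicit Weierstrass models. Applying Theorem \ref{FOMTHM}a with $\ell = 2$ and $a' = 1$ gives $\Q(\alpha(P),\beta(P)) \subseteq \Q(P) \subseteq K(\alpha(P),\beta(P))$. Since $\Delta = -4$, the point $\alpha(P) \in X_0(2,2)$ is a $-4$-CM point, so Proposition \ref{PROP6.2}c gives $\Q(\alpha(P)) = \Q$; hence $\Q(\alpha(P),\beta(P)) = \Q(\beta(P))$ and $K(\alpha(P),\beta(P)) = K\cdot\Q(\beta(P)) = K(\beta(P))$, the latter being a quadratic extension of $\Q(\beta(P))$ with $K \not\subseteq \Q(\beta(P))$ (because $[K(j_{\Delta'}):\Q(j_{\Delta'})] = 2$). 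By the path-type analysis of \S\ref{CM_points_prime_power_section} (with $\ff = 1$, so $L = 0$), $\beta(P)$ is induced by a nonbacktracking length-$a$ path in $\mathcal{G}_{\Q(\sqrt{-1}),2,1}$ issuing from the surface vertex: if $\varphi$ is purely descending this path terminates at level $a$, so $\Q(\beta(P)) \cong \Q(2^a)$; otherwise, since from level $0$ the only non-descending move is the surface loop and the loop can be neither repeated nor preceded by a descent without backtracking, the path is the surface loop followed by $a-1$ descents and terminates at level $a-1$, so $\Q(\beta(P)) \cong \Q(2^{a-1})$. In both cases it therefore suffices to decide whether $K \subseteq \Q(P)$, i.e., whether the $X_0(2,2^a)$-level structure carried by $P$ can be realized rationally on a model of $E$ over $\Q(\beta(P))$ on which $\ker\varphi$ is already rational.

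I would work throughout with the models $E_A : y^2 = x^3 + Ax$ of the $j = 1728$ curve: over any field $F$ of characteristic $0$ every $-4$-CM elliptic curve with $j = 1728$ is $F$-isomorphic to a unique such $E_A$ ($A \in F^\times/F^{\times 4}$), one has $E_A[2] = \{O,(0,0),(\pm\sqrt{-A},0)\}$, and $\langle(0,0)\rangle = \ker[1+i] = E_A[\pp_2]$ is the $F$-rational order-$2$ subgroup realizing the surface loop, while the two descending order-$2$ subgroups of the surface curve are $\langle(\pm\sqrt{-A},0)\rangle$. For case a) I would argue: the first edge of $\varphi$ is descending, so $\ker\varphi \cap E[2]$, being the unique order-$2$ subgroup $\ker\varphi[2]$ of the cyclic group $\ker\varphi$, equals $\{O,(\varepsilon\sqrt{-A},0)\}$ for some $\varepsilon = \pm 1$; choosing the quartic twist $E_A$ over $\Q(\beta(P))$ for which $\ker\varphi$ is rational (one exists since $\beta(P)$ lifts to a rational pair $(E_A,\ker\varphi)$), Galois-stability of $\ker\varphi[2]$ forces $\sqrt{-A} \in \Q(\beta(P))$, so all of $E_A[2]$ is $\Q(\beta(P))$-rational; then the full $\Z/2\times\Z/2^a$-structure underlying $P$ is defined over $\Q(\beta(P))$, whence $\Q(P) \subseteq \Q(\beta(P))$ and so $\Q(P) = \Q(\beta(P)) \cong \Q(2^a)$.

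For case b) I would argue by contradiction. Here the first edge of $\varphi$ is the surface loop, so $\ker\varphi \cap E[2] = E[\pp_2] = \langle(0,0)\rangle$ and the order-$2$ subgroup $D$ carrying the level structure of $P$ is one of $\langle(\pm\sqrt{-A},0)\rangle$. If $\Q(P) = \Q(\beta(P))$, then $E$ has a model over $\Q(\beta(P)) \cong \Q(2^{a-1})$ on which both $\ker\varphi$ and $D$ — hence $E[2] = \langle(0,0)\rangle + D$ — are rational, so $E \cong_{\Q(\beta(P))} E_{-d^2}$ for some $d \in \Q(\beta(P))^\times$. Factor $\varphi = \varphi'\circ\lambda$, where $\lambda : E_{-d^2} \to E_{-d^2}/\langle(0,0)\rangle \cong E_{4d^2}$ is the surface-loop isogeny, which is $\Q(\beta(P))$-rational (its kernel is rational and V\'elu's formulas produce $E_{4d^2}$ over $\Q(\beta(P))$), and $\varphi' : E_{4d^2} \to E'$ is a descending cyclic $2^{a-1}$-isogeny ($a - 1 \geq 1$). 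Then $\lambda$ carries $\ker\varphi$ onto the cyclic group $\ker\varphi'$, so $\ker\varphi'$ and hence its order-$2$ subgroup $\ker\varphi'[2]$ are Galois-stable over $\Q(\beta(P))$; but $\ker\varphi'[2]$, being a descending order-$2$ subgroup of the surface curve $E_{4d^2}$ with $2$-torsion $\{O,(0,0),(\pm 2id,0)\}$, equals $\langle(\pm 2id,0)\rangle$, which is Galois-stable over $\Q(\beta(P))$ only if $i \in \Q(\beta(P))$ — contradicting $K \not\subseteq \Q(\beta(P))$. Hence $\Q(P) = K(\beta(P)) = K(2^{a-1})$.

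The step I expect to be the main obstacle is the last one: the bookkeeping in case b) of transporting the cyclic kernel through the loop isogeny onto the explicit model $E_{4d^2}$ and verifying that the relevant descending $2$-torsion point there has irrational $x$-coordinate $2id$ — this is exactly where the surface loop's change of $\R$-structure, discussed in \S\ref{action_of_conjugation_section}, enters in an essential way. The remaining ingredients are routine: matching $\Q(\beta(P))$ with $\Q(2^a)$ resp. $\Q(2^{a-1})$ and $K(\beta(P))$ with $K\cdot\Q(\beta(P))$ follows from \S\ref{CM_points_prime_power_section} together with $[K(j_\Delta):\Q(j_\Delta)] = 2$, and the fact needed in case a) that once $E_A[2]$ is $\Q(\beta(P))$-rational every point of $\beta^{-1}(\beta(P))$ is $\Q(\beta(P))$-rational is immediate from the moduli description of $X_0(2,2^a)$.
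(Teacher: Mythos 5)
Your proof is correct, and its skeleton matches the paper's: you use Theorem \ref{FOMTHM} (plus Proposition \ref{PROP6.2}c)) to trap $\Q(P)$ between $\Q(\beta(P))$ and its quadratic extension $K(\beta(P))$, identify the two possible path types and hence $\Q(\beta(P)) \cong \Q(2^a)$ resp.\ $\Q(2^{a-1})$, settle case a) by observing that two of the three order-$2$ subgroups being rational forces the third, and settle case b) by a contradiction hinging on the surface loop. Where you genuinely diverge is in how the crux of case b) is certified. The paper invokes the $\R$-structure analysis of \S 4.2: the loop $\C/\Z[\zeta_4] \ra \C/(1+\zeta_4)\Z[\zeta_4]$ interchanges the two real models of the $j=1728$ curve, and only one of them carries a real descending $2$-isogeny, so a real point $P$ would force real descending $2$-isogenies on both ends of the loop. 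You instead make the same phenomenon explicit with Weierstrass models: $E_{-d^2}/\langle(0,0)\rangle \cong E_{4d^2}$ by V\'elu, whose descending $2$-torsion points have $x$-coordinates $\pm 2id$, so Galois-stability of $\ker\varphi'[2]$ forces $i \in \Q(\beta(P))$, contradicting $[K(\beta(P)):\Q(\beta(P))]=2$. This buys two small things: the argument for the corollary becomes self-contained (it does not lean on the double-cover machinery of \S 4.2), and the contradiction is obtained by a degree count inside $\Q(\beta(P))$ rather than by passing to a real embedding of $\Q(P)$ — which sidesteps the (harmless but real) subtlety in the paper's phrasing that $\Q(P) \cong \Q(2^{a-1})$ is only isomorphic to, not canonically contained in, a real field. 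The cost is the coordinate bookkeeping you flagged, all of which checks out: $\langle(0,0)\rangle = \ker[1+i]$ is the horizontal subgroup on every $E_A$, nonbacktracking guarantees $\ker\varphi'[2]$ is one of the two descending subgroups of $E_{4d^2}$, and $\lambda(\ker\varphi)=\ker\varphi'$ by the order count.
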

\begin{proof}  Since $j_E = 1728$, there are precisely two possibilities for $\varphi$: it either consists 
of $a$ descending edges, or it has one horizontal edge followed by $a-1$ descending edges. \\ \indent In the former case, we have 
$\Q(\varphi) \cong \Q(2^a)$.  Moreover, on the model of $E$ that makes $\varphi$ $\Q(\varphi)$-rational, we evidently have a 
descending $\Q(\varphi)$-rational $2$-isogeny.  As for any $-4$-CM elliptic curve defined over $\Q(\varphi)$, we have a horizontal 
$\Q(\varphi)$-rational $2$-isogeny.  Thus the $\gg_{\Q(\varphi)}$-action on the three order $2$ subgroup schemes of $E$ fixes 
two of the subgroups, so it must also fix the third.  We conclude that $\Q(P) = \Q(\beta(P))$ in this case. \\ \indent
Now suppose that $\varphi$ consists of a horizontal edge followed by $a-1 \geq 1$ descending edges.  Now $\Q(\varphi) \cong \Q(2^{a-1})$, which is real number field, so if $\Q(P) = \Q(\beta(P))$ then we would have $\Q(P) \cong \Q(2^{a-1})$, a real number field.  But 
as we saw in \S 4.2, the horizontal $2$-isogeny $\iota: E \ra E'$ on a real ellliptic curve with $j$-invariant $1728$ interchanges the two $\R$-structures on this elliptic curve, and on precisely one of the two $\R$-structures do we have an $\R$-rational descending $2$-isogeny.  
If $\Q(P) \subseteq \R$ then we would have $\R$-rational descending $2$-isogenies defined on both the source and target of $\iota$, 
which is not possible.  Therefore in this case $\Q(P) \supseteq \Q(\beta(P))$, so by Theorem \ref{FOMTHM} we have $\Q(P) = K(\beta(P)) = 
K(2^{a-1})$.  
\end{proof}

\subsection{$X_0(\ell^{a'},\ell^a)$}
Using Theorem \ref{FOMTHM}, Corollary \ref{FOMCOR} and the work of \S 5 and \S 6, it is easy to compute all primitive residue fields $\Q(P)$ of $\Delta$-CM 
points $P \in X_0(\ell^{a'},\ell^a)$.  Indeed:
\\ \\
$\bullet$ Suppose $\ell^{a'} \geq 3$.  Then for any $\Delta$-CM point $P \in X_0(\ell^{a'},\ell^a)$, equation (\ref{FOMCONTAINSKEQ}) applies.  So if $L'$ is the minimal level such that there is a nonbacktracking path in $\mathcal{G}_{K,\ell,\ff_0}$ 
starting in level $L$ (where $\Delta = \ell^{2L} \ff_0^2 \Delta_K$), the unique primitive residue field is $K(\ell^{\max(a',L')})$.  So: 
\\ \\
\textbf{Case 2.1}: If $\left( \frac{\Delta_K}{\ell} \right) = 1$, the primitive residue field is $K(\ell^{a'} \ff)$.  \\
\textbf{Case 2.2}: If $\left( \frac{\Delta_K}{\ell} \right) = -1$, the primitive residue field is $K(\ell^{\max(a',a-2L)} \ff)$.  \\
\textbf{Case 2.3}: If $\left( \frac{\Delta_K}{\ell} \right) = 0$, the primitive residue field is $K(\ell^{\max(a',a-2L-1)} \ff)$.  
\\ \\
$\bullet$ Suppose $\ell^{a'} = 2$ and $\Delta$ is odd.  Again equation (\ref{FOMCONTAINSKEQ}) applies and the unique 
primitive residue field is $K(2^{\max(a',L')}\ff) = K(2^{\max(1,L')}\ff)$.  So:
\\ \\
\textbf{Case 3.1}: If $a = 1$, the primitive residue field is $K(2\ff)$.  \\
\textbf{Case 3.2}: If $a \geq 2$ and $\left( \frac{\Delta}{2} \right) = 1$, the primitive residue field is $K(2\ff) = K(\ff)$.  \\
\textbf{Case 3.3}: If $a \geq 2$ and $\left( \frac{\Delta}{2} \right) = -1$, the primitive residue field is $K(2^a \ff)$.  
\\ \\
$\bullet$ Suppose $\ell^{a'} = 2$ and $\Delta$ is even. For a $\Delta$-CM point $P \in X_0(2,2^a)$ Theorem \ref{FOMTHM} and Corollary \ref{FOMCOR} tell us that if $\Delta \neq -4$, if $a=1$ or if an isogeny $\varphi$ inducing $\beta(P)$ is purely descending then 
\[\mathbb{Q}(P) = \mathbb{Q}(\alpha(P),\beta(P)), \] 
and otherwise we have $\mathbb{Q}(P) = K(2^{a-1}) = K(2^{a-1}\ff)$.
\\ \\
Our casework for $\Delta$-CM points on $X_0(2,2^a)$ is then as follows:
\\ \\
\textbf{Case 4.0:} $a = 1$.  The primitive residue field is $\Q(2\ff)$.
\\ \\
\textbf{Case 4.1:} $a \geq 2$, $L = 0$ and $\ord_2(\Delta_K) = 2$.  The primitive residue fields are $\Q(2^a \ff)$ and $K(2^{a-1} \ff)$.
\\ 
\textbf{Case 4.2:} $a \geq 2$, $L = 0$ and $\ord_2(\Delta_K) = 3$.  The primitive residue field is $\Q(2^{a-1} \ff)$.
\\ 
\textbf{Case 4.3:} $a \geq 2$, $L= 1$ and $\left( \frac{\Delta_K}{2} \right) = 1$. The primitive residue fields are $\Q(2^a \ff)$ and $K(2\ff)$.
\\ 
\textbf{Case 4.4:} $\left( \frac{\Delta_K}{2} \right) = 1$, $L \geq 2$ and $2 \leq a \leq 2L-1$.  The primitive residue field is $\Q(2\ff)$.
\\ 
\textbf{Case 4.5:} $\left( \frac{\Delta_K}{2} \right) = 1$, $L \geq 2$ and $a \geq 2L$.  The primitive residue fields 
are $\Q(2^{a-2L+2} \ff)$ and $K(2\ff)$. 
\\ 
\textbf{Case 4.6:} $\left( \frac{\Delta_K}{2} \right) = -1$, $L = 1$ and $a = 2$.  The primitive residue fields are $\Q(2^2 \ff)$ 
and $K(2 \ff)$.
\\ 
\textbf{Case 4.7:} $\left( \frac{\Delta_K}{2} \right) = -1$, $L = 1$ and $a \geq 3$.  The primitive residue 
fields are $\Q(2^a \ff)$ and $K(2^{a-2} \ff)$.
\\ 
\textbf{Case 4.8}:  $\left( \frac{\Delta_K}{2} \right) = -1$, $L \geq 2$ and $2 \leq a \leq 2L-1$.  The primitive residue field is $\Q(2\ff)$.
\\ 
\textbf{Case 4.9}:  $\left( \frac{\Delta_K}{2} \right) = -1$, $L \geq 2$ and $a = 2L$.  The primitive residue fields are 
$\Q(2^2 \ff)$ and $K(2\ff)$.
\\ 
\textbf{Case 4.10}:   $\left( \frac{\Delta_K}{2} \right) = -1$, $L \geq 2$ and $a \geq 2L+1$.  The primitive residue fields are 
$\Q(2^{a-2L+2} \ff)$ and $K(2^{a-2L} \ff)$.
\\ 
\textbf{Case 4.11}:  $\ord_2(\Delta_K) = 2$, $L \geq 1$ and $2 \leq a \leq 2L+1$.  The primitive residue field is $\Q(2\ff)$.
\\ 
\textbf{Case 4.12}:  $\ord_2(\Delta_K) = 2$, $L \geq 1$ and $a \geq 2L+2$.  The primitive residue fields are
$\Q(2^{a-2L} \ff)$ and $K(2^{a-2L-1} \ff)$.  
\\
\textbf{Case 4.13}:  $\ord_2(\Delta_K) = 3$, $L \geq 1$ and $2 \leq a \leq 2L+1$.  The primitive residue field is $\Q(2\ff)$.
\\ 
\textbf{Case 4.14}:  $\ord_2(\Delta_K) = 3$, $L \geq 1$ and $a \geq 2L+2$.  The primitive residue field is $\Q(2^{a-2L-1}\ff)$.  

\section{CM points on $X_0(M,N)_{/\Q}$}
\noindent
Throughout this section $\Delta = \ff^2 \Delta_K$ is an imaginary quadratic discriminant with $\Delta_K \in \{-3,-4\}$, and $M \mid N$ are positive integers. We now discuss how to use our work developed thus far to determine the $\Delta$-CM locus on $X_0(M,N)_{/\Q}$. In \S \ref{compile_section1} we recall how the compiling across prime powers process works for $\Delta < -4$, and in \S \ref{compile_section2} we provide a result for compiling across prime powers for $\Delta \in \{-3,-4\}$. In the remainder of this section, we give an explicit description of all primitive residue fields and primitive degrees in this case.

\subsection{Compiling Across Prime Powers with $\Delta < -4$}\label{compile_section1}
For this section, we suppose $\Delta< -4$ with $\Delta_K \in \{-3,-4\}$. With this assumption, \cite[Prop. 3.5]{Clark22a} applies and our compiling across prime powers process works much the same as in \cite[\S 9.1]{Clark22a}. We elaborate here for completeness of our discussion. 

For a prime $\ell$ and integers $0 \leq a' \leq a$, the fiber $F$ of 
$X_0(\ell^{a'},\ell^a) \ra X(1)$ over the closed point $J_{\Delta}$ is a finite \'etale $\Q(J_{\Delta})$-scheme, i.e. is isomorphic to a product of finite degree field extensions of $\Q(\ff)$. Our work up to now shows that the residue field of any CM point on $X_0(\ell^{a'},\ell^{a})$ is either a ring class field or a rational ring class field, and so there are non-negative integers $b_0,\ldots,b_a,c_1,\ldots,c_{a}$ 
such that $F \cong \Spec A$, where 
\begin{equation}
\label{COMPILEQ1}
 A = \prod_{j=0}^a \Q(\ell^j \ff)^{b_j} \times \prod_{k=0}^{a} K(\ell^k \ff)^{c_k}.
\end{equation}
When $a' = 0$, the explicit values of the $b_j$'s and $c_k$'s can be determined from our results in \S \ref{CM_points_prime_power_section}. When $\ell^{a'} \geq 3$ or $\Delta$ is odd, by Theorem \ref{BIGPROJTHM} we have $b_j = 0$ for all $0 \leq j \leq a$.  
\\ \\
We now explain how the previous results allow us to compute the fiber $F = \Spec A$ of $X_0(M,N) \ra X(1)$ over $J_{\Delta}$ for any positive integers $M \mid N$, where $M = \ell_1^{a_1'} \cdots \ell_r^{a_r'}$ and $N = \ell_1^{a_1} \cdots \ell_r^{a_r}$.  For $1 \leq i \leq r$, let $F_i \cong \Spec A_i$ be the fiber of $X_0(\ell_i^{a_i'},\ell_i^{a_i}) \ra X(1)$ over $J_{\Delta}$.  By \cite[Prop 3.5]{Clark22a} we have 
\begin{equation}
\label{COMPILEQ2}
 A \cong A_1 \otimes_{\Q(J_{\Delta})} \cdots \otimes_{\Q(J_{\Delta})} A_r. 
\end{equation}
It follows that $A$ is isomorphic to a direct sum of terms of the form 
\[ B \coloneqq B_1 \otimes_{\Q(\ff)} \cdots \otimes_{\Q(\ff)} B_r, \]
where for $1 \leq i \leq r$ we have that $B_i$ is isomorphic to either $\Q(\ell_i^{j_i} \ff)$ for some $0 \leq j_i \leq a$ or to $K(\ell^{j_i} \ff)$ for some $0 \leq j_i \leq a$. 

Let $s$ be the number of indices $1 \leq i \leq r$ such that $K$ is contained in $B_i$, i.e. such that $B_i \cong K(\ell_i^{j_i})$. Because $\ff > 1$, Proposition \ref{SAIAPROP2} gives:f
\[ B \cong \begin{cases} \mathbb{Q}(\ell_1^{j_1} \cdots \ell_r^{j_r} \ff) \quad &\text{ if } s = 0 \\
K(\ell_1^{j_1} \cdots \ell_r^{j_r} \ff)^{2^{s-1}} \quad &\text{ otherwise}. \end{cases} \]
(Note that $\ell_i^{2j_i} \Delta \in \{-12,-16,-27\}$ can only occur if $j_i = 0$, due to our $\ff > 1$ assumption.) We therefore reach the following extension of \cite[Theorem 9.1]{Clark22a}:

\begin{thm}
\label{COR6.1}
\label{9.1}
Let $\Delta = \ff^2 \Delta_K$ be an imaginary quadratic discriminant with $\Delta < -4$.  Let $M \mid N \in \Z^+$.  Let $P$ be a 
$\Delta$-CM closed point on $X_0(M,N)$.  
\begin{itemize}
\item[a)] The residue field $\Q(P)$ is isomorphic to either $\Q(M\ff)$ or $K(M \ff)$ for some $M \mid N$.
\item[b)] Let $M = \ell_1^{a_1} \cdots \ell_r^{a_r}$, $N = \ell_1^{a_1} \cdots \ell_r^{a_r}$ be the prime power decompositions of $M$ and $N$.  For $1 \leq i \leq r$, let 
$\pi_i: X_0(M,N) \ra X_0(\ell_i^{a_i'},\ell_i^{a_i})$ be the natural map and put $P_i \coloneqq \pi_i(P)$.  The following are equivalent:
\begin{itemize}
\item[(i)] The field $\Q(P)$ is formally real.
\item[(ii)] The field $\Q(P)$ does not contain $K$.
\item[(iii)] For all $1 \leq i \leq r$, the field $\Q(P_i)$ is formally real.
\item[iv)] For all $1 \leq i \leq r$, the field $\Q(P_i)$ does not contain $K$. 
\end{itemize}
\end{itemize}
\end{thm}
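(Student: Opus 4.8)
The plan is to obtain part a) directly from the compilation procedure assembled in the paragraphs immediately preceding the statement, following the proof of \cite[Theorem 9.1]{Clark22a}. Because $\Delta < -4$, the fiber-product description \cite[Prop. 3.5]{Clark22a} applies, so the fiber of $X_0(M,N) \ra X(1)$ over $J_\Delta$ is $\Spec A$ with $A \cong A_1 \otimes_{\Q(\ff)} \cdots \otimes_{\Q(\ff)} A_r$ as in~(\ref{COMPILEQ2}), where $A_i$ is the fiber of $X_0(\ell_i^{a_i'},\ell_i^{a_i}) \ra X(1)$ over $J_\Delta$; by the prime-power analysis of the previous sections (and Theorem~\ref{BIGPROJTHM}), each $A_i$ is a finite product of fields each isomorphic to some $\Q(\ell_i^{k}\ff)$ or $K(\ell_i^{k}\ff)$ with $a_i' \leq k \leq a_i$. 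Distributing the tensor product expresses $A$ as a direct sum of algebras $B = B_1 \otimes_{\Q(\ff)} \cdots \otimes_{\Q(\ff)} B_r$ with each $B_i$ of one of these two forms. Since $\Delta = \ff^2\Delta_K < -4$ forces $\ff \geq 2$, Proposition~\ref{SAIAPROP2}, applied iteratively, identifies each such $B$ with $\Q(\mathfrak{m}\ff)$ when no $B_i$ contains $K$ and with a power of $K(\mathfrak{m}\ff)$ otherwise, where $\mathfrak{m} = \prod_i \ell_i^{j_i}$ satisfies $M \mid \mathfrak{m} \mid N$. Hence every residue field $\Q(P)$ is isomorphic to $\Q(\mathfrak{m}\ff)$ or $K(\mathfrak{m}\ff)$ for such an $\mathfrak{m}$, which is part a).

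For part b) I would use that the two families of fields appearing in a) are distinguished precisely by formal reality: a rational ring class field $\Q(\mathfrak{m}\ff)$ is formally real (being generated by a real $j$-invariant, cf.\ \cite[\S 2.5]{Clark22a}) and does not contain $K$, whereas a ring class field $K(\mathfrak{m}\ff)$ contains $K$ and so has no real place. Thus (i) $\Leftrightarrow$ (ii) is immediate from a), and applying the same dichotomy to each $X_0(\ell_i^{a_i'},\ell_i^{a_i})$ and the point $P_i$ gives (iii) $\Leftrightarrow$ (iv). To link the two pairs of conditions: the morphism $\pi_i$ induces a field embedding $\Q(P_i) \hookrightarrow \Q(P)$, and a subfield of a formally real field is formally real, so (i) $\Rightarrow$ (iii); conversely, if no $\Q(P_i)$ contains $K$, then in the compilation above we are in the case $s = 0$, so the direct factor $\Q(P)$ of $B$ is itself a rational ring class field and hence formally real, which gives (iv) $\Rightarrow$ (ii). This closes the cycle of implications.

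The only step I expect to require genuine care is the invocation of Proposition~\ref{SAIAPROP2} in part a): the conductors $\ell_i^{j_i}\ff$ are not pairwise relatively prime, since they all share the factor $\ff$, so one applies part c) of that proposition repeatedly, with common gcd $\ff$, rather than part d) directly; and one must check --- exactly as in the parenthetical remark preceding the theorem --- that the class-number-one exceptional conductors in $S$ can intervene only through factors $B_i = \Q(\ff)$ (the case $j_i = 0$), which contribute nothing to the tensor product. Beyond this bookkeeping, once a) is in place part b) is purely formal, and I do not anticipate any substantive obstacle.
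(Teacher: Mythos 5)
Your proposal follows the paper's own route exactly: part a) is established in the discussion immediately preceding the theorem (the fiber-product decomposition via \cite[Prop. 3.5]{Clark22a}, distribution of the tensor product into factors $B_1 \otimes_{\Q(\ff)} \cdots \otimes_{\Q(\ff)} B_r$, and then Proposition \ref{SAIAPROP2} together with the observation that the exceptional conductors in $S$ can only arise from $j_i = 0$), and part b) is the formal consequence of the dichotomy between rational ring class fields and ring class fields that you describe. One small correction: $\Delta < -4$ does \emph{not} by itself force $\ff \geq 2$ (e.g. $\Delta = \Delta_K = -7$); it is the section's standing hypothesis $\Delta_K \in \{-3,-4\}$ that, combined with $\Delta < -4$, gives $\ff > 1$ and makes Proposition \ref{SAIAPROP2} applicable, while in the complementary case $\Delta_K < -4$ one substitutes \cite[Prop. 2.10]{Clark22a} (indeed that case is already \cite[Thm. 9.1]{Clark22a}) and reaches the same conclusion.
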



\subsection{Compiling Across Prime Powers with $\Delta \in \{-3,-4\}$}\label{compile_section2}

Throughout this section, we assume that $\Delta \in \{-3,-4\}$. 

\begin{prop}\label{bounds_on_fom_lemma}
Suppose that $\varphi : E \rightarrow E'$ is a cyclic $N$-isogeny, with $N$ having prime-power factorization $N = \ell_1^{a_1} \cdots \ell_r^{a_r}$. For each $i \in \{1,\ldots,r\}$, let $\varphi_i : E \rightarrow E_i$ be the $\ell_i$-primary part of $\varphi$. Let $b_i$ such that $\mathbb{Q}(\varphi_i)$ is isomorphic to either $K(\ell_i^{b_i})$ or to $\mathbb{Q}(\ell_i^{b_i})$. Then 
\[ \mathbb{Q}(\ell_1^{b_1} \cdots \ell_r^{b_r}) \subseteq \mathbb{Q}(\varphi) \subseteq K(\ell_1^{b_1} \cdots \ell_r^{b_r}). \] 
\end{prop}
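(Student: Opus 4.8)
The plan is to prove the two inclusions separately: the lower bound will come essentially for free from the Atkin--Lehner involution, and the upper bound will be obtained by passing to the dual isogeny $\varphi^{\vee}$ and splitting into two cases. As throughout this section, $E$ is $\Delta$-CM with $\Delta \in \{-3,-4\}$, so $\ff = 1$ and $E$ lies at the surface. The first step is to pin down the arithmetic data. By Theorem \ref{THM4.1} the exponent $b_i$ is forced to equal the level of the target $E_i$ of $\varphi_i$, i.e. $\ord_{\ell_i}$ of the conductor of $\End(E_i)$ (the candidate fields $\Q(\ell_i^{b_i})$ and $K(\ell_i^{b_i})$ have distinct degrees over $\Q$ for distinct $b_i$, so $b_i$ is well defined). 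Next I would identify $E'$: for each $i$ the quotient of $E_i$ by the image of $\bigoplus_{j \neq i} \ker\varphi_j$ is an isogeny $E_i \to E'$ of degree prime to $\ell_i$, and such an isogeny does not alter the $\ell_i$-part of the CM order; hence $\End(E')$ has conductor $b \coloneqq \ell_1^{b_1} \cdots \ell_r^{b_r}$, so $j(E') = j_{b^2 \Delta_K}$ and $\Q(j(E')) = \Q(b)$.

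For the lower bound: $\Q(\varphi)$ is the residue field of the closed point $[(E, \ker\varphi)]$ of $X_0(N)$, and the Atkin--Lehner involution $w_N$, which is defined over $\Q$, carries this point to $[(E/\ker\varphi, E[N]/\ker\varphi)] = [\varphi^{\vee}]$; hence $\Q(\varphi) \cong \Q(\varphi^{\vee})$, and both contain the residue field $\Q(j(E')) = \Q(b)$ of the image of $[\varphi^{\vee}]$ under $X_0(N) \to X(1)$. This is exactly the inclusion $\Q(\ell_1^{b_1} \cdots \ell_r^{b_r}) \subseteq \Q(\varphi)$.

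For the upper bound I would work with $\varphi^{\vee} \colon E' \to E$ and split on whether $b = 1$. If $b = 1$, then every $E_i$, and hence $E'$, is $\Delta_K$-CM; choosing a $\C$-isomorphism $\iota \colon E' \to E$, the composite $\iota \circ \varphi$ lies in $\End(E) = \Z_K$, say $\iota\circ\varphi = \alpha$ with $\Z_K/(\alpha)$ cyclic, so $\ker\varphi = E[(\alpha)]$ is a $\Z_K$-submodule. Since $j(E) \in \{0,1728\}$, $E$ has a model over $\Q$, and after base change to $K$ the group $\gg_K$ acts $\Z_K$-linearly on torsion, so $\ker\varphi$ is $\gg_K$-stable and $\Q(\varphi) \subseteq K = K(1) = K(b)$. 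If $b > 1$, then $b^2\Delta_K < -4$, so $\varphi^{\vee}$ is a cyclic $N$-isogeny whose CM source has discriminant $< -4$; the results of \cite{Clark22a}, in the form of Theorem \ref{COR6.1} with prime-power input Theorem \ref{THM4.1} applied to $\Delta' = b^2\Delta_K$, apply to $[\varphi^{\vee}] \in X_0(N)$, and since the source $E'$ has CM conductor $b$ while the target $E$ has CM conductor $1 \mid b$, the field of moduli $\Q(\varphi^{\vee})$ is isomorphic to $\Q(b)$ or to $K(b)$. In either case $\Q(\varphi) = \Q(\varphi^{\vee}) \subseteq K(b) = K(\ell_1^{b_1} \cdots \ell_r^{b_r})$.

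The step I expect to be the main obstacle is the bookkeeping in the case $b > 1$: one must check that feeding $\varphi^{\vee}$ into the $\Delta < -4$ machinery of \cite{Clark22a} produces exactly the conductor $b$ and nothing larger --- the field of moduli there is governed by the compositum $\Q(j(E'), j(E))$, equivalently by the $\max$ of the $\ell$-adic levels of source and target, rather than by any larger intermediate level that the path of $\varphi^{\vee}$ might visit; this is precisely the shape of Theorem \ref{THM4.1} and of the way that $\max$ survives compilation across prime powers. By contrast, the case $b = 1$ is the clean one, resting only on the observation that an isogeny between two surface curves is proper.
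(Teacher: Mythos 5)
Your proof is correct and, for the upper bound, follows the paper's own route exactly: pass to $\varphi^{\vee}$, treat $b=1$ by observing that $\varphi$ is then (up to isomorphism on the target) an endomorphism with cyclic cokernel and hence has $\gg_K$-stable kernel over $K$, and treat $b>1$ by noting that the source of $\varphi^{\vee}$ has discriminant $b^2\Delta_K<-4$, so Theorem \ref{THM4.1} at each prime together with the fiber-product result of Part I forces $\Q(\varphi^{\vee})$ into $K(b)$. Your worry about ``nothing larger than $b$'' surviving compilation is easily discharged: each primary part of $\varphi^{\vee}$ has source of conductor $b$ and target whose conductor is trivial at the relevant prime, so each factor field already embeds in $K(b)$, and a tensor product over $\Q(b)$ of copies of $\Q(b)$ and $K(b)$ has every component inside $K(b)$ --- no composita lemma is needed. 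The only genuine divergence is in the lower bound, and it is tied to your opening assumption that $E$ itself is $\Delta_K$-CM. Under that assumption your shortcut is valid: no $\varphi_i$ can ascend, so $\End(E')$ has conductor exactly $b$ and $\Q(b)\cong\Q(j(E'))\subseteq\Q(\varphi)$. The paper, however, states the proposition for a general $K$-CM source and its proof explicitly allows $j(E)\notin\{0,1728\}$; in that generality your argument breaks, since if some $\varphi_i$ ascends then $\ord_{\ell_i}$ of the conductor of $\End(E')$ drops below $b_i$, and one cannot repair this by taking the compositum $\Q(j(E))\cdot\Q(j(E'))$, because for $\Delta_K\in\{-3,-4\}$ that compositum may be a proper subfield of the rational ring class field of the $\lcm$ of the two conductors. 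The paper's device is to quotient only by the subgroup $C'$ generated by those $\ker\varphi_i$ along which the conductor grows: the intermediate curve $E/C'$ then has conductor divisible by $b$ at every prime, giving $\Q(b)\hookrightarrow\Q(j(E/C'))\subseteq\Q(\varphi)$ in one stroke. Either state your surface hypothesis explicitly or add this one factorization step.
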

\begin{proof}
Let $C = \text{ker}(\varphi)$, and for each $i \in \{1,\ldots, r\}$ let $C_i \leq C$ be the Sylow-$\ell_i$ subgroup of $C$, that is $C_i = \text{ker}(\varphi_i)$. Let $\ff$ denote the conductor of $\End(E)$, and for $1 \leq i \leq r$ let $\ff_i$ denote the conductor of 
$\End(E_i)$.  Let
\[ \mathcal{I} = \{i \mid \text{ord}_{\ell_i}(\ff_i) > \text{ord}_{\ell_i}(\ff)) \} \subseteq \{1, \ldots, r\}, \]
and let 
\[ C' = \left \langle \left \{ C_i \right \}_{i \in \mathcal{I}} \right \rangle \subseteq C. \]
Then $\varphi$ factors as $\varphi = \varphi'' \circ \varphi'$, where $\varphi ' : E \rightarrow E/C'$.  Using the fact that isogenies of 
degree prime to $\ell_i$ cannot change the $\ell_i$-part of the conductor, we see that $\End(E/C')$ has
conductor divisible by $\ell_1^{b_1} \cdots \ell_r^{b_r}$. Thus we have
\[ \mathbb{Q}(\ell_1^{b_1} \cdots \ell_r^{b_r}) \subseteq \mathbb{Q}(\varphi') \subseteq \mathbb{Q}(\varphi). \]
It remains to show the containment $\mathbb{Q}(\varphi) \subseteq K(\ell_1^{b_1} \cdots \ell_r^{b_r})$. If $j(E) \not \in \{0,1728\}$, then this follows from Theorem \ref{NEW6.1} and \cite[Prop. 3.5]{Clark22a},  so we suppose $j(E) \in \{0,1728\}$. If $j(E') = j(E)$, then 
$\varphi$ is (up to isomorphism on the target) an endomorphism of $E$, hence defined over $K$.   If $j(E') \neq j(E)$ then $j(E') \not \in \{0, 1728\}$, so our previous work applies via consideration of the dual isogeny as $\mathbb{Q}(\varphi) \cong \mathbb{Q}(\varphi^{\vee})$. 
\end{proof}

Proposition \ref{bounds_on_fom_lemma} provides bounds on the field of moduli of an isogeny. We now use this result to determine the exact field of moduli in the case where our source elliptic curve has $-3$-CM or $-4$-CM, which we state from the perspective of determining the residue field of the corresponding CM point on $X_0(N).$

\begin{thm}\label{CM_points_f=1_M=1} 
\label{PROP8.3}
Let $N \in \mathbb{Z}^+$ with prime-power factorization $\ell_1^{a_1}\cdots\ell_r^{a_r}$, and suppose $x \in X_0(N)$ is a $\Delta$-CM point with $\Delta \in \{-3,-4\}$. Let $\pi_i : X_0(N) \rightarrow X_0(\ell_i^{a_i})$ be the natural map, and let $x_i = \pi_i(x)$.  Let $P_i$ be any path in the closed point equivalence class corresponding to $x_i$ in $\mathcal{G}_{K,\ell_i,1}$, and let $d_i \geq 0$ be the number of descending edges in $P_i$. 
\begin{itemize}
\item[a)] If there is some $1 \leq i \leq r$ such that $\ell_i$ splits in $K$ and the path $P_i$ contains a surface edge, then 
\[ \mathbb{Q}(x) = K(\ell_1^{d_1} \cdots \ell_r^{d_r}). \]
\item[b)] In every other case, we have 
\[ \mathbb{Q}(x) \cong \mathbb{Q}(\ell_1^{d_1}\cdots\ell_r^{d_r}). \]
\end{itemize}
\end{thm}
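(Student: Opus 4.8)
The plan is to pin $\Q(x)$ down between a rational ring class field and a ring class field using Proposition~\ref{bounds_on_fom_lemma}, and then to decide which of the two it is. First I would set up notation as usual: take $j(E)=j_{\Delta_K}$, fix a cyclic $N$-isogeny $\varphi\colon E\to E'$ inducing $x$, and let $\varphi_i\colon E\to E_i$ be its $\ell_i$-primary part, inducing $x_i$. Since $\ff=1$, the curve $E$ lies at the surface of each $\mathcal{G}_{K,\ell_i,1}$, so by Lemma~\ref{NEW4.2} the path $P_i$ consists of $h_i$ horizontal edges followed by $d_i$ descending edges with $h_i+d_i=a_i$; in particular the target of $\varphi_i$ has $\ell_i$-conductor $d_i$, and Theorem~\ref{NEW6.1} shows $\Q(x_i)=\Q(\varphi_i)$ is isomorphic to $\Q(\ell_i^{d_i})$ or to $K(\ell_i^{d_i})$. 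Reading off the $X_0(\ell^a)$ dictionary of \S\ref{CM_points_prime_power_section} (equivalently the complex-conjugation analysis of \S\ref{action_of_conjugation_section}) one sees that the second alternative occurs exactly when $\ell_i$ splits in $K$ \emph{and} $P_i$ contains a surface edge, and the first alternative in all remaining cases. Put $\ff'\coloneqq\ell_1^{d_1}\cdots\ell_r^{d_r}$. By Proposition~\ref{bounds_on_fom_lemma} we have $\Q(\ff')\subseteq\Q(x)\subseteq K(\ff')$, and since $[K(\ff'):\Q(\ff')]=2$ it remains only to decide whether $K\subseteq\Q(x)$.

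In case (a) this is immediate: if $\ell_i$ splits and $P_i$ has a surface edge then $K\subseteq\Q(x_i)$, and $\Q(x_i)$ embeds in $\Q(x)$ because $\pi_i$ is a $\Q$-morphism, so $\Q(x)=K(\ff')$. For case (b) I would exhibit a model of the dual isogeny $\varphi^\vee$ over $\Q(\ff')$. Note first that, since $\mathcal{G}_{K,\ell_i,1}$ has no surface edges when $\ell_i$ is inert, in case (b) a surface edge can only occur at a prime $\ell_i=\rad(\Delta_K)\in\{2,3\}$ ramified in $K$, and then $P_i$ has exactly one such edge (a single surface loop, the loop repeated not being cyclic). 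Thus $\varphi=\varphi_\downarrow\circ\varphi_h$, where $\varphi_h$ is either the identity or the horizontal isogeny attached to the prime $\pp\coloneqq\pp_{\rad(\Delta_K)}$ of $\Z_K$ (self-conjugate, since ramified), $\varphi_\downarrow$ is purely descending of degree $\ff'$, and its source $E''$ satisfies $j(E'')=j_{\Delta_K}$.

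The Atkin--Lehner involution $w_N$ sends $x$ to the point induced by $\varphi^\vee$, so $\Q(x)\cong\Q(\varphi^\vee)$; and $\varphi^\vee=\varphi_h^\vee\circ\psi$, where $\psi\coloneqq\varphi_\downarrow^\vee$ is the (unique) ascending $\ff'$-isogeny out of $E'$ and $\varphi_h^\vee$ is again the identity or the loop attached to $\overline{\pp}=\pp$. Now I would realize $E'$ as $\C/\OO((\ff')^2\Delta_K)$ over $\Q(\ff')$ and $E''$ as the standard model $\C/\Z_K$ over $\Q$: by the computation in the proof of Proposition~\ref{REFINEDBC4.5}(b) the isogeny $\psi$ is defined over $\Q(\ff')$ on these models, while on $E''=\C/\Z_K$ the subgroup $E''[\pp]$ is $\gg_{\Q}$-stable (complex conjugation carries $E''[\pp]$ to $E''[\overline{\pp}]=E''[\pp]$), so $\varphi_h^\vee$ is defined over $\Q$. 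Hence $\varphi^\vee$ is defined over $\Q(\ff')$, and therefore $\Q(x)\cong\Q(\varphi^\vee)$ embeds in $\Q(\ff')$; combined with the lower bound of the first paragraph, $\Q(x)\cong\Q(\ff')$.

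I expect the main obstacle to be the last step of case (b): one must verify that the abstract dual $\varphi^\vee$ and the explicit composite $\varphi_h^\vee\circ\psi$ define the same closed point of $X_0(N)$, and in particular that the standard model $\C/\Z_K$ on which $\varphi_h^\vee$ is $\Q$-rational is the same one that $\psi$ lands on. This is exactly where the analysis of \S\ref{action_of_conjugation_section} must be invoked: a horizontal edge over $\rad(\Delta_K)$ interchanges the two $\R$-structures of a $\Delta_K$-CM elliptic curve — the phenomenon behind the double cover $\widetilde{\mathcal{G}}$ when $\Delta_K=-4$, $\ell=2$ — so one has to keep careful track of which $\R$-model, hence which rational model, is in play. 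One also needs that the closed point of $\varphi^\vee$ is unchanged under quadratic twisting of $E'$, together with a short automorphism count showing that a cyclic $N$-isogeny of prescribed shape between fixed CM curves is a single closed point; these are routine. The subcase $\ff'=N$ (no surface edges, $\varphi$ purely descending) is cleanest: there $\varphi^\vee=\psi$ is the ascending $N$-isogeny, which is $\Q(N)$-rational directly by Proposition~\ref{REFINEDBC4.5}.
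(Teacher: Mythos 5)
Your argument is correct and follows essentially the same route as the paper's proof: both pass to the dual isogeny $\varphi^\vee\colon E'\to E$ so that (once some $d_i>0$) the source has automorphism group $\{\pm 1\}$ and rationality of its subgroups becomes model-independent, and both reduce the horizontal part to the observation that the only surface edge available in case (b) lives over the ramified prime and corresponds to the self-conjugate ideal $\pp=\overline{\pp}$. The obstacle you flag at the end is not actually one: since $\gg_{\Q}$ acts on $\Z_K$-ideals through $\Gal(K/\Q)$, the kernel $E''[\pp]$ is $\gg_{\Q(\ff')}$-stable on \emph{every} model of $E''$ (it is the unique cyclic subgroup of its order whose quotient is again $\Delta_K$-CM), so after dualizing no bookkeeping of $\R$-structures or appeal to the double cover of \S\ref{action_of_conjugation_section} is required.
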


\begin{proof}
Let $\varphi: E \rightarrow E'$ be a cyclic $N$-isogeny inducing the point $x$.  For each $1 \leq i \leq r$, let 
$\varphi_i: E \ra E_i$ be the $\ell_i$-primary part of $\varphi$: that is, the kernel of $\varphi_i$ is the $\ell_i$-Sylow subgroup of 
the kernel of $\varphi$. \\
Case 1: Suppose that $E'$ is also a $\Delta_K$-CM elliptic curve.  By \cite[\S 3.4]{Clark22a}, the isogeny $\varphi$ is isomorphic over $\C$ 
to $E \ra E/[I]$ for a nonzero ideal $I$ of $\Z_K$, and we have $\Q(\varphi) \cong \Q(j(E)) = \Q$ if $I$ is real ideal (i.e., $I = \overline{I})$ and $\Q(\varphi) = K(j(E)) = K$ is $I$ is not a real ideal.  If we factor $I = \pp_1^{c_1} \cdots \pp_r^{c_r}$ into prime powers and 
$\pp_i$ lies over $\ell_i$, then we have (up to an isomorphism on the target) that $\varphi_i: E \ra E/[\pp_i^{c_i}]$.  
 Notice that the path in $\mathcal{G}_{K,\ell_i,1}$ corresponding to $\varphi_i$ lies 
entirely on the surface.   If some $\ell_i$ splits in $K$, then $\pp_i^{c_i}$ is not a real ideal, so $\Q(\varphi) = K$.  If no $\ell_i$ splits in $K$, then  each $\pp_i^{c_i}$ is real, so $I$ is real and $\Q(\varphi) = \Q$.  \\
Case 2: Otherwise $E'$ is a $\Delta = \ff^2 \Delta_K$-CM elliptic curve for some $\ff > 1$.  Since $\Q(\varphi) = \Q(\varphi^{\vee})$, 
we may compute the field of moduli of the dual isogeny $\varphi^{\vee}: E' \ra E$.  Since $\Aut E' = \{ \pm 1\}$, the rationality of 
a subgroup of $E'$ is independent of the model, so we have $\Q(\varphi) = \Q(\varphi_1) \cdots \Q(\varphi_r)$, and the result 
now follows from \cite[Thm. 5.1]{Clark22a}.
\end{proof}

\begin{cor}
\label{NEWLOWERBOUNDCOR}
Let $\varphi: E \ra E'$ be an isogeny of $K$-CM elliptic curves defined over $\C$.  Let $\ff$ (resp. $\ff'$) be the conductor of $\End(E)$ 
(resp. of $\End(E')$).  Then the field of moduli $\Q(\varphi)$ of $\varphi$ contains a subfield isomorphic to $\Q(\lcm(\ff,\ff'))$.
\end{cor}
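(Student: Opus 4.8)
The plan is to extract the required lower bound directly from the proof of Proposition \ref{bounds_on_fom_lemma}, by observing that the auxiliary curve constructed there has endomorphism ring of conductor \emph{exactly} $\lcm(\ff,\ff')$; the argument works verbatim for any imaginary quadratic $K$.

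First I would reduce to the case that $\varphi$ is cyclic. Writing $\ker \varphi \cong \Z/d_1\Z \times \Z/d_2\Z$ with $d_1 \mid d_2$, we have $E[d_1] \subseteq \ker\varphi$, so $\varphi$ factors as $\psi \circ [d_1]$ with $\psi$ a cyclic isogeny $E \to E'$; since $[d_1]$ is defined over the prime field, $\ker \varphi$ and $\ker\psi$ determine one another canonically, whence $\Q(\varphi) = \Q(\psi)$, while the source and target curves (hence $\ff$ and $\ff'$) are unchanged. So assume $\varphi$ is a cyclic $N$-isogeny with $N = \ell_1^{a_1}\cdots \ell_r^{a_r}$ and $\ell_i$-primary parts $\varphi_i\colon E \to E_i$.

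Next, following the proof of Proposition \ref{bounds_on_fom_lemma}, put $C = \ker\varphi$, let $C_i = \ker\varphi_i$ be the Sylow-$\ell_i$ subgroup of $C$, let $\ff_i$ be the conductor of $\End(E_i)$, set $\mathcal{I} = \{\, i : \ord_{\ell_i}(\ff_i) > \ord_{\ell_i}(\ff)\,\}$ and $C' = \langle C_i : i \in \mathcal{I}\rangle \subseteq C$, and factor $\varphi = \varphi'' \circ \varphi'$ with $\varphi'\colon E \to E/C'$. The key observation is that $\End(E/C')$ has conductor exactly $\lcm(\ff,\ff')$. Indeed, the isogeny $E_i \to E'$ has degree prime to $\ell_i$, so $\ord_{\ell_i}(\ff') = \ord_{\ell_i}(\ff_i)$; and, treating the cases $i \in \mathcal{I}$ and $i \notin \mathcal{I}$ separately (in the first, $E_i \to E/C'$ has degree prime to $\ell_i$; in the second, $E \to E/C'$ has degree prime to $\ell_i$), one finds $\ord_{\ell_i}(\text{cond } \End(E/C')) = \max(\ord_{\ell_i}(\ff), \ord_{\ell_i}(\ff'))$ for every $i$. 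For a prime $\ell \nmid N$ we have $\ord_\ell(\ff) = \ord_\ell(\ff') = \ord_\ell(\text{cond }\End(E/C'))$, since $\varphi$ itself has degree prime to $\ell$. Hence $j(E/C')$ is the $j$-invariant of a $K$-CM elliptic curve with endomorphism conductor $\lcm(\ff,\ff')$, so $\Q(j(E/C'))$ is a subfield of $\C$ isomorphic to $\Q(\lcm(\ff,\ff'))$.

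Finally I would verify $\Q(j(E/C')) \subseteq \Q(\varphi') \subseteq \Q(\varphi)$. The first inclusion holds because $E/C'$ is the target of $\varphi'$. For the second, the point is that $C'$ is built from $C = \ker\varphi$ by a construction that commutes with the action of $\Aut(\C)$: the Sylow decomposition is intrinsic, and the index set $\mathcal{I}$ is defined via conductors of endomorphism rings, which are preserved under conjugation; thus any $\sigma \in \Aut(\C)$ with $\varphi^\sigma \cong \varphi$ also has $(\varphi')^\sigma \cong \varphi'$, so $\Q(\varphi') \subseteq \Q(\varphi)$. Combining the inclusions gives $\Q(\varphi) \supseteq \Q(j(E/C')) \cong \Q(\lcm(\ff,\ff'))$. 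The only steps needing care are the conductor bookkeeping in the third paragraph and the Galois-equivariance of the construction of $C'$; both are routine, and everything else is already present in the proof of Proposition \ref{bounds_on_fom_lemma}.
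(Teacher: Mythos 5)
Your proof is correct, but it is not the route the paper takes. The paper deduces the corollary in two lines from the full classification of residue fields of $\Delta$-CM points on $X_0(N)$ (Theorems \ref{COR6.1} and \ref{PROP8.3}): those theorems force $\Q(\varphi)$ to be isomorphic to $\Q(M\ff)$ or $K(M\ff)$ for some $M \in \Z^+$, and the symmetry $\Q(\varphi) = \Q(\varphi^{\vee})$ then forces $\ff' \mid M\ff$, whence $\Q(\lcm(\ff,\ff')) \hookrightarrow \Q(M\ff) \subseteq \Q(\varphi)$. You instead sharpen the proof of Proposition \ref{bounds_on_fom_lemma}: the intermediate curve $E/C'$ constructed there has endomorphism conductor exactly $\lcm(\ff,\ff')$, and $\Q(j(E/C')) \subseteq \Q(\varphi') \subseteq \Q(\varphi)$. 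Your prime-by-prime conductor bookkeeping checks out, and the Galois-equivariance of the construction $C \mapsto C'$ that you flag is precisely what is needed to justify the inclusion $\Q(\varphi') \subseteq \Q(\varphi)$ (the paper's Proposition \ref{bounds_on_fom_lemma} asserts that inclusion without comment). Your route is more self-contained: it bypasses the classification theorems entirely, is uniform in $\Delta_K$, and your explicit reduction to cyclic $\varphi$ addresses a point the paper's proof leaves implicit, since the cited theorems concern closed points on $X_0(N)$ and hence only cyclic isogenies, while the corollary is stated for arbitrary isogenies. What the paper's argument buys is brevity: once Theorems \ref{COR6.1} and \ref{PROP8.3} are in hand, the corollary is immediate.
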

\begin{proof}
Theorems \ref{COR6.1} and \ref{PROP8.3} imply that $\Q(\varphi)$ is isomorphic to $\Q(M\ff)$ or $K(M\ff)$ for some $M \in \Z^+$. 
Using the fact that $\Q(\varphi) = \Q(\varphi^{\vee})$ we find that $\ff' \mid M \ff$, and the result follows.
\end{proof}
\noindent
When $\Delta_K < -4$, then Corollary \ref{NEWLOWERBOUNDCOR} holds just because $\Q(\iota) \supseteq \Q(j(E),j(E'))$.  
However:

\begin{cor}
\label{KADJOINJCOR}
Let $\Delta_K \in \{-3,-4\}$, let $\ff,\ff'$ be coprime positive integers not lying in the set $S$ of Proposition \ref{SAIAPROP1}: that is, if $\Delta_K = -3$ then $\ff,\ff' > 3$ and if $\Delta_K = -4$ then $\ff,\ff' > 2$.  Let $\varphi: E \ra E'$ be an isogeny of 
$K$-CM elliptic curves defined over $\C$ such that $\End(E)$ has conductor $\ff$ and $\End(E')$ has conductor $\ff'$.  Then 
$\varphi$ cannot be defined over $K(j(E),j(E'))$.
\end{cor}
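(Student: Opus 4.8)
The plan is to argue by contradiction via a degree count, treating $\Delta_K=-3$ and $\Delta_K=-4$ together as far as possible and isolating one extra observation for the exceptional case $\Delta_K=-4$. So suppose $\varphi$ is defined over $L \coloneqq K(j(E),j(E'))$. Since $j(E)=j_{\ff^2\Delta_K}$ and $j(E')=j_{\ff'^2\Delta_K}$, we have $K(j(E))=K(\ff)$ and $K(j(E'))=K(\ff')$, so $L=K(\ff)K(\ff')$ as a compositum inside $\C$. The field of moduli is contained in every field of definition, so $\Q(\varphi)\subseteq L$. On the other hand, as $\ff,\ff'$ are coprime we have $\lcm(\ff,\ff')=\ff\ff'$, so Corollary \ref{NEWLOWERBOUNDCOR} produces a subfield $F\subseteq \Q(\varphi)\subseteq L$ with $F\cong\Q(\ff\ff')$. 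The key point about $F$ is that $\Q(\ff\ff')$ is formally real — it is generated over $\Q$ by the real algebraic number $j_{(\ff\ff')^2\Delta_K}$ — so $F$ does not contain a copy of $K$.

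Next I would compute $[L:\Q]$. Because the hypothesis $\ff,\ff'\notin S$ is precisely what is needed, Proposition \ref{SAIAPROP1}c) applies with $r=2$ and gives $[K(\ff\ff'):K(\ff)K(\ff')]=w_K/2$, i.e. $[K(\ff\ff'):L]=w_K/2$. Moreover $[K(\ff\ff'):\Q(\ff\ff')]=2$, since $\Q(\ff\ff')$ is formally real and hence does not contain $K$, so $K(\ff\ff')=K\cdot\Q(\ff\ff')$ is a quadratic extension of it. Combining, $(w_K/2)\cdot[L:\Q]=[K(\ff\ff'):\Q]=2\cdot[\Q(\ff\ff'):\Q]$, whence $[L:\Q]=\tfrac{4}{w_K}[\Q(\ff\ff'):\Q]$.

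Finally I would split into cases. If $\Delta_K=-3$ then $w_K=6$, so $[L:\Q]=\tfrac23[\Q(\ff\ff'):\Q]<[\Q(\ff\ff'):\Q]=[F:\Q]$, which is absurd since $F\subseteq L$. If $\Delta_K=-4$ then $w_K=4$, so $[F:\Q]=[\Q(\ff\ff'):\Q]=[L:\Q]$ and therefore $F=L$; but then $L\cong\Q(\ff\ff')$ is formally real, contradicting $K\subseteq L$. Either way $\varphi$ cannot be defined over $L$. I expect the only real obstacle to be the $\Delta_K=-4$ case, where the naive degree inequality degenerates to an equality: there one must bring in the extra input that $L$ contains $\sqrt{-1}$ whereas every rational ring class field is formally real. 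The remaining steps are routine bookkeeping on top of Corollary \ref{NEWLOWERBOUNDCOR} and Proposition \ref{SAIAPROP1}c).
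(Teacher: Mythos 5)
Your proof is correct and is essentially the paper's argument: the paper's proof is a one-line citation of Corollary \ref{NEWLOWERBOUNDCOR} and Proposition \ref{SAIAPROP1}, and your degree count is exactly the intended way to combine them. Your extra observation for $\Delta_K=-4$ (where the degrees coincide and one must invoke formal reality of the rational ring class field) is a genuine and correctly handled subtlety that the paper leaves implicit.
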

\begin{proof}
This follows from Corollary \ref{NEWLOWERBOUNDCOR} and Proposition \ref{SAIAPROP1}.
\end{proof} 
\noindent
Next we obtain a version of Theorem \ref{CM_points_f=1_M=1} in the $M \geq 2$ case, finding in particular that the residue 
field of a CM point on $X_0(M,N)$ is isomorphic to either a rational ring class field or a ring class field in all cases.

\begin{thm}
\label{THM8.4}
Let $M,N \in \Z^{\geq 2}$ with $M \mid N$, and write 
\[ M = \ell_1^{a_1'} \cdots \ell_r ^{a_r'}, \ N = \ell_1^{a_r} \cdots \ell_r^{a_r}. \]
Let 
\[ \alpha: X_0(M,N) \ra X_0(M,M), \ \beta: X_0(M,N) \ra X_0(N) \]
and 
\[ \forall 1 \leq i \leq r, \ \pi_i: X_0(N) \ra X_0(\ell_i^{a_i}) \]
be the canonical maps.  Let $\Delta \in \{-3,-4\}$, let $P$ be a $\Delta$-CM closed point of $X_0(M,N)$ and let $y_i \coloneqq \pi_i(\beta(P))$.  Let $d_i$ 
be the number of descending edges in $y_i$, and put
\[ \overline{d_i} \coloneqq \max(a_i',d_i). \]
\begin{itemize}
\item[a)] Suppose $M \geq 3$ or ($M = 2$ and $\Delta = -3$).  Then 
\[ \Q(P) = K(\ell_1^{\overline{d_1}} \cdots \ell_r^{\overline{d_r}}). \]
\item[b)] Suppose $M = 2$ and $\Delta = -4$; we put $\ell_1 = 2$.   
\begin{itemize}
\item[(i)] If $a_1 \geq 2$ and $y_1 \in X_0(2^a)$ is not purely descending, then 
\[ \Q(P) =K(\ell_1^{\overline{d_1}} \cdots \ell_r^{\overline{d_r}}). \]
\item[(ii)] Otherwise, we have
\[ \Q(P) \cong \begin{cases} K(2^{a_1} \cdot \ell_2^{d_2} \cdots \ell_r^{d_r}) \quad &\text{ if } K \subseteq \mathbb{Q}(y_i) \text{ for some } 2 \leq i \leq r, \\
\mathbb{Q}(2^{a_1} \cdot \ell_2^{d_2} \cdots \ell_r^{d_r}) \quad &\text{ otherwise.} \end{cases} \]
\end{itemize}
\end{itemize}
\end{thm}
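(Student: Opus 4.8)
The plan is to bound $\Q(P)$ above and below by an explicit ring class field or rational ring class field and then match the two bounds; this is the composite-level analogue of Theorem \ref{FOMTHM}. Write $D \coloneqq \ell_1^{d_1}\cdots\ell_r^{d_r}$, so that $\prod_i \ell_i^{\overline{d_i}} = \lcm(M,D)$ and, if $\varphi\colon E \to E'$ is a cyclic $N$-isogeny inducing $\beta(P)$, then $\End(E')$ has conductor $D$. Since $\Delta\in\{-3,-4\}$ we have $\ff = 1$, so Theorem \ref{BIGPROJTHM}(a) together with Proposition \ref{PROP6.2} give $\Q(\alpha(P)) = K(M)$ when $M\geq 3$, $\Q(\alpha(P)) = K$ when $M=2$ and $\Delta = -3$, and $\Q(\alpha(P)) = \Q$ when $M=2$ and $\Delta = -4$. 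In particular $K\subseteq\Q(P)$ automatically in case (a) and in case (b)(i), whereas a formally real answer can occur only in case (b)(ii). Theorem \ref{CM_points_f=1_M=1} computes $\Q(\beta(P))$, and Corollary \ref{FOMCOR} computes the residue field of the image of $P$ in $X_0(2,2^{a_1})$ in the exceptional case. The overall structure is: first treat $\gcd(M,D) > 1$, then $\gcd(M,D) = 1$, isolating throughout the $\Delta = -4$, $M = 2$ phenomenon.

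For the upper bound in cases (a) and (b)(i) I would exhibit a model $E_L$ of $E$ over $L \coloneqq K(\lcm(M,D))$ on which simultaneously the mod $M$ Galois representation is scalar and some cyclic $N$-isogeny inducing $\beta(P)$ is defined; this produces a representative of $P$ over $L$, hence $\Q(P)\hookrightarrow L$. The existence of a model over $L$ with scalar mod $\lcm(M,D)$ representation is Proposition \ref{REFINEDBC4.5}, glued across the prime divisors of $\lcm(M,D)$ as in the proof of Theorem \ref{BIGPROJTHM}; on such a model every cyclic $M$-subgroup is $L$-rational, and factoring $\varphi$ into its $\ell_i$-primary parts $\varphi_i$ (each horizontal followed by a descent of depth $d_i\leq\overline{d_i}$) and rerunning the scalar-matrix argument of Theorem \ref{THM4.1} shows each $\varphi_i$, and hence $\varphi$, is $L$-rational. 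When $\Delta = -4$ and $M = 2$ this breaks, precisely because the horizontal $2$-isogeny on the $j = 1728$ curve interchanges the two real structures, so the scalar-mod-$2$ model and the $2$-power-isogeny model need not coincide; here I would instead invoke Corollary \ref{FOMCOR} for the $2$-primary contribution, handle the prime-to-$2$ contribution by the argument just described, and splice the two using Proposition \ref{SAIAPROP2} --- this is the origin of the dichotomy (i)/(ii).

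For the lower bound, $\Q(P) \supseteq \Q(\alpha(P))\cdot\Q(\beta(P))$ is immediate. If $\gcd(M,D) > 1$, Propositions \ref{SAIAPROP1} and \ref{SAIAPROP2} already identify this compositum with $K(\lcm(M,D))$ (respectively with the appropriate rational ring class field in case (b)(ii)), completing that subcase. The main obstacle is the case $\gcd(M,D) = 1$: then every prime dividing $M$ splits in $K$ with $\varphi_i$ horizontal, so $\Q(\beta(P)) = K(D)$ by Theorem \ref{CM_points_f=1_M=1}(a), but $K(M)\cdot K(D)$ is a \emph{proper} subfield of $K(\lcm(M,D))$ by Proposition \ref{SAIAPROP1}, so the naive compositum undershoots the claimed answer. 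To close this gap I would produce a $\Q(P)$-rational isogeny $E \to E''$ with $\End(E'')$ of conductor divisible by $\lcm(M,D)$: the scalar mod $M$ structure makes every cyclic $\ell_i^{a_i'}$-subgroup ($\ell_i \mid M$) $\Q(P)$-rational, so one may form the $\Q(P)$-rational descending isogeny $\psi$ cut out by the descending such subgroups and then push the prime-to-$M$ part of $\varphi$ through $\psi$; the resulting $E''$ has conductor $M\cdot D = \lcm(M,D)$. Corollary \ref{NEWLOWERBOUNDCOR} then gives $\Q(\lcm(M,D)) \hookrightarrow \Q(P)$, and since $K\subseteq\Q(P)$ in cases (a) and (b)(i) while $\Q(\lcm(M,D))\cdot K = K(\lcm(M,D))$, the lower bound matches the upper bound; in the remaining formally real subcase of (b)(ii) the upper bound already produced a rational ring class field, so nothing further is needed. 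What remains is the routine but delicate bookkeeping of checking, in each sub-subcase of (b), that the two bounds coincide, tracking when Corollary \ref{FOMCOR} forces $K$ into $\Q(P)$ and when it does not.
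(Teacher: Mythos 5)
Your overall architecture is the same as the paper's: sandwich $\Q(P)$ between the compositum $\Q(\alpha(P))\cdot\Q(\beta(P))$ from below and $K(\ell_1^{\overline{d_1}}\cdots\ell_r^{\overline{d_r}})$ from above, with the upper bound coming from a model over the latter field with scalar mod-$\lcm(M,D)$ representation (Proposition \ref{REFINEDBC4.5} plus the argument of Theorem \ref{FOMTHM}), and the $\Delta=-4$, $M=2$ dichotomy traced to Corollary \ref{FOMCOR}. Where you genuinely diverge is the lower bound when $\gcd(M,D)=1$: the paper's written proof of part a) simply invokes Proposition \ref{SAIAPROP1}a) to identify $K(M)K(D)$ with $K(\lcm(M,D))$, which strictly requires $\gcd(M,D)>1$; your fix --- use the scalar mod-$M$ structure to produce a $\Q(P)$-rational descending cyclic $M$-subgroup, compose with the prime-to-$M$ part of $\varphi$ to reach a curve of conductor $MD$, and conclude via Corollary \ref{NEWLOWERBOUNDCOR} that $\Q(\lcm(M,D))\hookrightarrow\Q(P)$, hence $K(\lcm(M,D))\subseteq\Q(P)$ --- is exactly the "rationally isogenous curve of divisible conductor" mechanism the introduction advertises, and it correctly closes a case (e.g.\ $\Delta=-3$, $M=7$, $N=35$ with $y_1$ horizontal) where the naive compositum has index $w_K/2$ in the answer. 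This part of your proposal is, if anything, more careful than the text.

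The genuine gap is in case b)(ii) when $a_1=1$ and the $2$-isogeny $y_1$ is horizontal. Corollary \ref{FOMCOR} assumes $a\geq 2$, so it says nothing here, and "splice the two using Proposition \ref{SAIAPROP2}" is not available: the residue field of $P$ on $X_0(2,N)$ is \emph{not} computed by tensoring the prime-power fibers, precisely because the fiber-product decomposition of \cite[Prop.\ 3.5]{Clark22a} fails over $j=1728$. You need an argument that forces the factor $2^{a_1}=2$ into the conductor and simultaneously supplies the matching upper bound. The paper does this by observing that $X_0(2,N)\ra X_0(N/2)$ is a $\GL_2(\Z/2\Z)\cong S_3$ Galois cover, so swapping the two independent order-$2$ subgroups $(C_1,C_2)\mapsto(C_2,C_1)$ is an automorphism preserving residue fields; since $C_1$ horizontal forces $C_2$ descending, this reduces to the purely descending case, where the "two of the three order-$2$ subgroups rational implies all three rational" argument of Corollary \ref{FOMCOR} gives $\Q(P)=\Q(\beta(P'))$ and Theorem \ref{PROP8.3} finishes. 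Relatedly, your assertion that the upper bound in b)(ii) "already produced a rational ring class field" is not a consequence of your general upper-bound construction (which only yields $K(\lcm(M,N))$-rationality); it requires restating the order-$2$-subgroup argument of Corollary \ref{FOMCOR} for the full level-$(2,N)$ structure over $\Q(\beta(P))$, not just for the $2$-primary part.
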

\begin{proof}
a) Our hypotheses on $M$ imply (cf. \S 6) that
\[ \Q(\alpha(P)) = K(\ell_1^{a_1'} \cdots \ell_r^{a_r'}). \]
In particular, this implies that $\Q(P) \supset K$.  Applying Proposition \ref{PROP8.3} we get
\[ K(\beta(P)) = K(\ell_1^{d_1} \cdots \ell_r^{d_r}). \]
Using this and Proposition \ref{SAIAPROP1}a), we get:
\[ \Q(P) \supseteq \Q( \alpha(P), \beta(P)) = K(\alpha(P),\beta(P)) = K(\ell_1^{a_1'} \cdots \ell_r^{a_r'})K(\ell_1^{d_1} \cdots \ell_r^{d_r}) = 
K(\ell_1^{\overline{d_1}} \cdots \ell_r^{\overline{d_r}}). \] 
Conversely, using Proposition \ref{REFINEDBC4.5}, let $E_{/K(\ell_1^{\overline{d_1}} \cdots \ell_r^{\overline{d_r}})}$ be an elliptic curve with $j$-invariant $j_{\Delta}$ 
and with modulo $\ell_1^{\overline{d_1} \cdots \overline{d_r}}$-Galois representation given by scalar matrices.
For $1 \leq i \leq r$, let $\varphi_i: E \ra E_i$ be a cyclic 
$\ell_i^{a_i}$-isogeny of $\C$-elliptic curves containing $d_i$ descending edges.  It follows from the proof of Theorem \ref{FOMTHM}, 
that the kernel $C_i$ of $\varphi_i$ is a $K(\ell_1^{\overline{d_1}} \cdots \ell_r^{\overline{d_r}})$-rational subgroup scheme, hence 
so is $C = \langle C_1,\ldots,C_r \rangle$, which is the kernel of $\varphi$.  Thus we have found a $K(\ell_1^{\overline{d_1}} \cdots 
\ell_r^{\overline{d_r}})$-rational model of $P$. \\
b) (i) By Corollary \ref{FOMCOR}, we have $\Q(P) \supseteq K$.   The rest of the argument is the same as that 
of part a).  \\
(ii) First, suppose that $y_1$ is purely descending and let $\varphi: E \ra E'$ be an isogeny defined over $\Q(\beta(P))$ that induces the point $\pi_1(\beta(P)) \in X_0(2^{a_1})$.  Then the initial 
edge of $\varphi$ is downward, so as in the proof of Corollary \ref{FOMCOR} every order $2$ subgroup of $E$ is $\Q(\beta(P))$-rational. This provides $\mathbb{Q}(P) = \mathbb{Q}(\beta(P))$, and the stated isomorphism then follows from Proposition \ref{PROP8.3}. 

Lastly, suppose that $a_1 = 1$ and that the $2$-isogeny inducing $y_1$ is horizontal.  This final case can be reduced to the previous one via automorphisms of the modular curve $X_0(2,N)$.  Indeed, observe that the map 
$\pi: X_0(2,N) \ra X_0(\frac{N}{2})$ is a $\GL_2(\Z/2\Z) \cong S_3$ Galois cover.  The level $N$-structure associated to $X_0(2,N)$ may viewed as an elliptic curve equipped with an ordered triple $(C,C_1,C_2)$, where $C$ is a cyclic subgroup of $E$ of order $\frac{N}{2}$ 
and $C_1$ and $C_2$ are distinct cyclic subgroups of $E$ of order $2$.
The map $\pi$ can then be viewed as $(E,C,C_1,C_2) \mapsto (E,\langle C,C_1 \rangle)$.  The $GL_2(\Z/2\Z)$ action fixes $E$ and $C$,
and on the pair $(C_1,C_2)$ the action is the natural and simply transitive one on pairs of order $2$ subgroups of $E[2]$.  Therefore if $P \in X_0(2,N)$ is induced by a tuple $(E,C,C_1,C_2)$ with $C_1$ a horizontal $2$-isogeny, then $C_2$ is necessarily descending and we have $\Q(P) = \Q(P')$ 
where $P'$ is induced by $(E,C,C_2,C_1)$.  By the previous case, we have that
\[ \Q(P') = \Q(\beta(P')) \]
is the field of moduli of the isogeny $E \ra E/\langle C,C_2 \rangle$. Again, the stated isomorphism follows via Proposition \ref{PROP8.3}.  
\end{proof}

\noindent
Theorems \ref{PROP8.3} and \ref{THM8.4} are the key ingredients for the determination of primitive residue fields and primitive degrees of $\Delta$-CM 
points on $X_0(M,N)$, which we will provide in the next section.  However, there remains the problem of computing the set of all $\Delta$-CM 
points on $X_0(M,N)$ with a given rational ring class field or ring class field as residue field.  The following results solve this problem. 

\begin{thm}
Let $N \in \Z^{\geq 2}$ have prime power factorization $N = \ell_1^{a_1} \cdots \ell_r^{a_r}$.  For $1 \leq i \leq r$, let $P_i \in X_0(\ell_i^{a_i})$ be a $\Delta$-CM point, and let $\pi_i: X_0(N) \ra X_0(\ell_i^{a_i})$ denote the natural map.  Let $\mathcal{F}$ be the set of closed points $P \in X_0(N)$ such that $\pi_i(P) = P_i$ for all $1 \leq i \leq r$.  Put 
\[ s \coloneqq \# \{1 \leq i \leq r \mid \Q(P_i) \text{ contains } K \}. \]
If $s = 0$, then $\# \mathcal{F} = 1$.  If $s \geq 1$, then $\mathcal{F}$ consists of $2^{s-1}$ points, each with residue field the same 
ring class field.
\end{thm}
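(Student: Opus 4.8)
The plan is to compute the fiber $\mathcal{F}$ scheme-theoretically as a tensor product of étale $\Q(\ff)$-algebras, and then extract the cardinality and the common residue field from the ring-class-field arithmetic already established in Section 2. Concretely, let $F$ (resp. $F_i$) denote the fiber of $X_0(N)\ra X(1)$ (resp. of $X_0(\ell_i^{a_i})\ra X(1)$) over the closed point $J_\Delta$. Since $\Delta \in \{-3,-4\}$ so $\ff = 1$ in the notation $\Delta = \ff^2\Delta_K$, the scheme-theoretic fiber product result fails (this is exactly Step 2a of the introduction), so I cannot simply write $F \cong F_1 \otimes_{\Q(J_\Delta)} \cdots \otimes_{\Q(J_\Delta)} F_r$. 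Instead I would argue directly with isogenies: a closed point $P \in \mathcal{F}$ corresponds to a cyclic $N$-isogeny $\varphi: E \ra E'$ with prescribed $\ell_i$-primary parts $\varphi_i$ (up to the appropriate equivalence), where $\varphi_i$ induces $P_i$. Since $\varphi$ is recovered from the tuple $(\varphi_1,\ldots,\varphi_r)$ by $\ker\varphi = \langle \ker\varphi_1, \ldots, \ker\varphi_r\rangle$, the set of such $\varphi$ up to isomorphism, hence $\mathcal{F}$, is in bijection with an appropriate set of tuples, and the residue field $\Q(P)$ is the field of moduli $\Q(\varphi)$, which by Proposition \ref{PROP8.3} (Theorem \ref{CM_points_f=1_M=1}) is a single well-determined ring class field or rational ring class field depending only on the descending-edge data $d_i$ and the split-surface-edge condition.

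\textbf{Key steps, in order.} First I would fix the source curve to have $j$-invariant $j_\Delta$ (no loss of generality) and invoke Proposition \ref{PROP8.3}: if $s = 0$ — i.e. no $\Q(P_i)$ contains $K$ — then by part (b) of that proposition $\Q(P) \cong \Q(\ell_1^{d_1}\cdots\ell_r^{d_r})$, a genuine rational ring class field; if $s \geq 1$ then by part (a), since some $\ell_i$ splits and the corresponding $P_i$'s path contains a surface edge (this is what ``$\Q(P_i)$ contains $K$'' forces for surface paths), $\Q(P) = K(\ell_1^{d_1}\cdots\ell_r^{d_r})$. This immediately gives the ``common residue field'' claim for every point of $\mathcal{F}$. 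Second, I would count. Over $\C$, the number of cyclic $N$-isogenies $\varphi$ with given $\ell_i$-primary parts is exactly one (the kernel is determined), so the $\C$-points mapping to $(P_1,\ldots,P_r)$ form a single $\gg_\R$-or-$\gg_\Q$-orbit structure; the number of closed $\Q$-points is then $\deg(\pi^{-1}(P_1\times\cdots\times P_r))/[\Q(P):\Q]$ where the total degree is $\prod_i [\Q(P_i):\Q]$ (by the fiber-product count on the level of $\C$-points / function-field degrees, which \emph{does} hold even though the scheme-theoretic fiber product is only a partial normalization). So $\#\mathcal{F} = \prod_i [\Q(P_i):\Q] / [\Q(P):\Q]$. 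Third, I would evaluate this ratio: writing $\Q(P_i) \cong K(\ell_i^{d_i})$ for the $s$ indices with $K \subseteq \Q(P_i)$ and $\Q(P_i) \cong \Q(\ell_i^{d_i})$ for the rest (using that for surface paths $\Q(P_i)$ is $\Q(\ell_i^{d_i})$ or $K(\ell_i^{d_i})$), and using Proposition \ref{SAIAPROP2}(d) — the degree formula $[\Q(\ff_1\cdots\ff_r):\Q(\ff_1)\cdots\Q(\ff_r)] = (w_K/2)^{r-1}$ and its ring-class-field analogue from Proposition \ref{SAIAPROP1}(c) — together with $[K(m):\Q(m)] = 2$ for $m$ with nontrivial ring class field, the product telescopes to $1$ when $s=0$ and to $2^{s-1}$ when $s \geq 1$.

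\textbf{Main obstacle.} The delicate point is justifying the degree bookkeeping in the absence of the clean scheme-theoretic fiber product: I need that $\sum_{P \in \mathcal{F}} [\Q(P):\Q] = \prod_i [\Q(P_i):\Q]$, i.e. that the normalization/desingularization issue flagged in Step 2a does \emph{not} change the total degree over $\Spec\Q(J_\Delta)$ of the fiber. This is true — a desingularization of a curve is birational, and the fiber of $X_0(N)$ over a \emph{generic} enough point, or more precisely the degree of the relevant map, is unchanged — but one must phrase it carefully, e.g. by working with function-field degrees: $\deg(X_0(N) \ra X_0(\ell_1^{a_1})) = \prod_{i\geq 2}\psi(\ell_i^{a_i})$ is multiplicative and matches the product of local degrees, and the fiber over $J_\Delta$, being a $0$-dimensional scheme, has total length equal to the generic degree. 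The cleanest route is probably to cite the $\psi$-multiplicativity degree identity recalled in Section \ref{CM_points_prime_power_section} and the fact (already used implicitly throughout Section 8) that $\mathcal{F}$ is the fiber of $X_0(N) \ra \prod X_0(\ell_i^{a_i})$ over $(P_1,\ldots,P_r)$, whose degree is $\prod_i \psi(\ell_i^{a_i}) / \psi(N) = 1$ on $\C$-points but whose \emph{$\Q$-rational} structure splits according to the Galois action — and then one is reduced to the purely group-theoretic count above, which is exactly what Propositions \ref{SAIAPROP1} and \ref{SAIAPROP2} were built to handle.
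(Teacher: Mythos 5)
Your identification of the common residue field via Theorem \ref{PROP8.3} is fine, but the counting step contains a genuine error. The formula $\# \mathcal{F} = \prod_i [\Q(P_i):\Q] / [\Q(P):\Q]$ is false in the presence of ramification over $X(1)$, which is exactly the situation when $\Delta \in \{-3,-4\}$. Concretely, take $\Delta = -4$ and $N = \ell_1 \ell_2$ with both $\ell_i$ inert in $K = \Q(\sqrt{-1})$. Then each $P_i$ is the unique (purely descending, ramified) point with $[\Q(P_i):\Q] = \frac{\ell_i+1}{2}$, while the unique point $P \in \mathcal{F}$ has $\Q(P) \cong \Q(\ell_1\ell_2)$ of degree $\frac{(\ell_1+1)(\ell_2+1)}{2}$; your ratio is $\frac{1}{2}$, not $1$. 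The underlying reason is that the number of geometric points of $X_0(N)$ lying over the geometric points of $P_1,\ldots,P_r$ is not $\prod_i [\Q(P_i):\Q]$: one must count cyclic subgroups first and only then pass to $\Aut(E)/\{\pm 1\}$-orbits, so the correct identity is $\#\mathcal{F} \cdot e_P f_P = \prod_i e_{P_i} f_{P_i}$ (when $e_P$ and $f_P$ are constant on $\mathcal{F}$), and the ramification indices do not cancel multiplicatively because a composite path is ramified once no matter how many of its primary components are ramified. Your proposed fix via "function-field degrees and $\psi$-multiplicativity" does not repair this, since the defect is caused by the extra automorphisms at $j \in \{0,1728\}$, not by the desingularization being non-flat in codimension one; it is the same obstruction that kills the fiber-product result of \cite[Prop. 3.5]{Clark22a} in the first place.

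The paper avoids this bookkeeping entirely. When the target curve $E'$ has conductor $\ff' > 1$, it passes to the dual isogeny: this gives a residue-field-preserving bijection from $\mathcal{F}$ to the analogous fiber $\mathcal{F}'$ for a source curve with $\Delta' = (\ff')^2\Delta_K < -4$, where the scheme-theoretic fiber product and the tensor-product computations (Proposition \ref{SAIAPROP2}) apply without ramification issues, and it checks that the invariant $s$ is preserved under this bijection. When $\ff' = 1$ (both source and target are $\Delta_K$-CM), every point of $\mathcal{F}$ comes from an ideal-theoretic isogeny $E \ra E/E[I]$, and one counts the $2^s$ ideals $I$ of norm $N$ with $\Z_K/I$ cyclic, falling into $2^{s-1}$ orbits under complex conjugation. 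If you want to salvage your direct approach, you would need to track $e_P$ and the $e_{P_i}$ explicitly through the count, which amounts to redoing the path-type analysis rather than quoting a degree identity.
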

\begin{proof}
Step 1: Suppose that $\Delta = \ff^2 \Delta_K < -4$.  Let $F$ be the fiber of $X_0(N) \ra X(1)$ over $J_{\Delta}$, and for $1 \leq i \leq r$ let $F_i$ be the fiber of $X_0(\ell_i^{a_i}) \ra X(1)$ over $J_{\Delta}$.  Then by \cite[Prop. 3.5]{Clark22a} we have that $F$ is the fiber 
product of $F_1,\ldots,F_r$ over $\Spec \Q(\ff)$.  By our hypothesis, we have either $\Delta_K < -4$ or $\ff > 1$.   If $\Delta_K < -4$,
the result follows from this and \cite[Prop. 2.10]{Clark22a}, as is recorded in \cite[\S 9.1]{Clark22a}.  If $\Delta_K \in \{-3,-4\}$ 
and $\ff > 1$, the result follows from this and Proposition \ref{SAIAPROP2}.  \\ \indent
For the remainder of the argument we suppose that $\Delta \in \{-3,-4\}$.   Let $P \in \mathcal{F}$, let $\varphi: E \ra E'$ be 
an isogeny inducing the point $P$, and put $C \coloneqq \Ker \varphi^{\vee}$.  The endomorphism ring of $E'$ has 
discriminant $(\ff')^2 \Delta_K$ for some $\ff' \mid N$.  \\ \
Step 2: We suppose that $\ff' > 1$.  For each $1 \leq i \leq r$, let $C_i \coloneqq C[\ell_i^{a_i}]$ and $(\varphi^{\vee})_i$ be the 
isogeny $E' \ra E'/C_i \eqqcolon E_i$.  Then $\varphi^{\vee}$ factors as $\psi_i \circ (\varphi^{\vee})_i$, wehre $\psi_i: E_i \ra E$ is a cyclic $\frac{N}{\ell_i^{a_i}}$-isogeny.  Let $\ff_i$ be the conductor of the endomorphism ring of $E_i$, so $\ord_{\ell_i}(\ff_i) = 0$, since 
the conductor of $\End(E)$ is $1$ and $\deg(\psi_i)$ is prime to $\ell_i$.  
\\ \indent
For $1 \leq i \leq r$, the path in $\mathcal{G}_{K,\ell_i,1}$ corresponding to $(\varphi^{\vee})_i$ therefore terminates at the unique 
surface vertex, hence it consists of $\ord_{\ell_i}(\ff')$ ascending edges, which are uniquely determined by $E'$, followed by 
$a_i - \ord_{\ell_i}(\ff_i)$ horizontal edges.  If $\ell_i$ does not split in $K$ the path corresponding to $(\varphi^{\vee})_i$ is 
therefore uniquely determined, whereas if $\ell_i$ splits in $K$ there are two such paths which are complex conjugates of each other and therefore determine the same closed point equivalence class.  Let $P_i' \in X_0(N)$ be the $\ff^2 \Delta_K$-CM point induced by 
$(\varphi^{\vee})_i$, and let $\mathcal{F}'$ be the set of closed points $P' \in X_0(N)$ such that $\pi_i(P') = P_i'$ for all $1 \leq i \leq r$.
Thus passage to the dual isogeny gives a residue-field preserving bijection from $\mathcal{F}$ to $\mathcal{F}'$.  Let $1 \leq i \leq r$.  
Because the path corresponding to $P_i$ begins at the surface and the path corrsesponding to $P_i'$ ends at the surface, we have that $\Q(P_i)$ contains $K$ if and only if $\ell_i$ splits in $K$ in $\ord_{\ell_i}(\ff') < a_i$ if and only if $\Q(P_i')$ contains $K$.  
Applying Step 1, we get that if $s = 0$ then $\# \mathcal{F} = \# \mathcal{F}' = 1$, while if $s \geq 1$ then 
\[ \# \mathcal{F} = \# \mathcal{F}' = 2^{s-1}. \]
Step 3: We suppose that $\ff' = 1$.  In this case every element of $\mathcal{F}$ is induced by an isogeny $\varphi_I: E \ra E/E[I]$ for 
$I$ a nonzero $\Z_K$-ideal such that $\Z_K/I$ is cyclic, and $\deg \varphi_I = ||I|| \coloneqq \# \Z_K/I$.  Moreover the field of moduli of $\varphi_I$ is $\Q$ if $I = \overline{I}$ 
and $K$ otherwise \cite[\S 3.4]{Clark22a}.  For distinct $I$ and $J$, the isogenies $\varphi_I$ and $\varphi_J$ induce the same closed point 
on $X_0(N)$ if and only if $J = \overline{I}$, so closed point equivalence classes in this case correspond to orbits under $\gg_{\R}$.  
For a prime power $\ell^a$, there is an ideal $I$ of norm $\ell^a$ such that $\Z_K/I$ is cyclic if and only if ($\ell$ ramifies in $\Z_K$ 
and $a  = 1$) or $\ell$ splits in $K$.  In the ramified case there is a unique ideal of norm $\ell$, while in the split case the two ideals 
of norm $\ell^a$ are $\pp^a$ and $\overline{\pp}^a$ where $\pp$ and $\overline{\pp}$ are the two primes of $\Z_K$ lying over $\ell$.  
If $s$ is the number of $1 \leq i \leq r$ such that $\ell_i$ splits in $\Z_K$, then if $s = 0$ then $N$ the unique prime number that ramifies in $\Z_K$ so $\# \mathcal{F} = 1$.  If $s \geq 1$, then the number of ideals $I$ of norm $N$ such that $\Z_K/I$ is cyclic is $2^s$, and the 
number of $\gg_{\R}$-orbits of such ideals is $2^{s-1}$.  
\end{proof}

\begin{cor}
Let $M \mid N$ be in $\mathbb{Z}^{+}$ with prime-power factorizations $M = \ell_1^{a_1'} \cdots \ell_r^{a_r'}$ and $N = \ell_1^{a_1} \cdots \ell_r^{a_r}$ with $\ell_1 \leq \ldots \leq \ell_r$. Let $\pi : X_0(M,N) \rightarrow X_0(N)$ denote the natural map, and let $\pi_i : X_0(N) \rightarrow X_0(\ell_1^{a_1})$ denote the natural map for $1 \leq i \leq r$. Let $P_i \in X_0(\ell_1^{a_1})$ be $\Delta$-CM points for each index $i$, and let $\mathcal{F}$ be the set of closed points $P \in X_0(M,N)$ such that $\pi_i(\pi(P)) = P_i$ for all $1 \leq i \leq r$. For each $1 \leq i \leq r$, let $d_i$ denote the number of descending edges occurring in a path in $\mathcal{G}_{K,\ell_i,f_0}$ corresponding to $P_i$. Put
\[ s := \#\{1 \leq i \leq r \mid \mathbb{Q}(P_i) \text{ contains } K\}, \]
and put 
\[ \epsilon = \begin{cases} 1 \quad &\text{ if } s=0, (M, \Delta) = (2, -4) \text{ and } a_1 \not\in \{1,d_1\} ,  \\
0 \quad &\text{ otherwise}. \end{cases} \]
We then have that $\mathcal{F}$ consists of 
\begin{align*} 
\# \mathcal{F} = 2^{\max(s-1,0) - \epsilon} \cdot M \varphi(M) \cdot \left( \prod_{i \text{ with } a_i' > d_i = 0} \ell_i^{a_i'-1} \left(\ell_i - \leg{\Delta}{\ell_i}\right)\right)^{-1} \cdot \left( \prod_{i \text{ with } a_i' > d_i > 0} \ell_i^{a_i'-d_i} \right)^{-1} 
\end{align*}
points with isomorphic residue fields. 
\end{cor}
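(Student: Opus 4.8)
The plan is to reduce this count to the already-established count of $\Delta$-CM points on $X_0(N)$ by fibering along the natural map $\beta\colon X_0(M,N)\to X_0(N)$, and then to pin down the degree, ramification index, and residual degree of $\beta$ at a $\Delta$-CM point. First I would write $\mathcal{F}=\bigsqcup_Q \beta^{-1}(Q)$, the union over the closed points $Q\in X_0(N)$ with $\pi_i(Q)=P_i$ for all $i$. By the preceding Theorem this index set has $2^{\max(s-1,0)}$ elements; all such $Q$ are $\Delta$-CM, have isomorphic residue fields, and correspond to the same descent data $(d_1,\dots,d_r)$ and the same splitting behaviour, so by Theorems \ref{PROP8.3} and \ref{THM8.4} the fibers $\beta^{-1}(Q)$ all have equal cardinality and a common residue field. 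Hence $\#\mathcal{F}=2^{\max(s-1,0)}\cdot\#\beta^{-1}(Q_0)$ for a single fixed such $Q_0$, and the problem becomes a local computation at $Q_0$.

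Next I would analyze $\beta^{-1}(Q_0)$. The map $\beta$ is a (normal) covering of degree $M\varphi(M)$, and by the previous paragraph all points of $\beta^{-1}(Q_0)$ share a common ramification index $e$ and residual degree $f=[\Q(P):\Q(Q_0)]$, so $\#\beta^{-1}(Q_0)=M\varphi(M)/(ef)$. To determine $e$: the composite $X_0(M,N)\to X(1)$ ramifies over $J_\Delta$ only if $\Delta\in\{-3,-4\}$, in which case the ramification index of any $\Delta$-CM point on any of the curves in sight over $X(1)$ is $1$ or $w_\Delta/2$ according as that point is or is not elliptic; a $\Delta$-CM point is elliptic precisely when its path is purely horizontal, i.e. all $d_i=0$ (\S\ref{CM_points_prime_power_section}), whereas by the argument in the proof of Theorem \ref{X0_to_X1} every $\Delta$-CM point on $X_0(M,N)$ with $M\geq2$ is non-elliptic. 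Thus $e=w_\Delta/2$ exactly when $M\geq2$ and all $d_i=0$, and $e=1$ otherwise. To determine $f$: I would read $\Q(P)$ off Theorem \ref{THM8.4} and $\Q(Q_0)$ off Theorem \ref{PROP8.3}, and compute the field degree using Propositions \ref{SAIAPROP1}, \ref{SAIAPROP2} and formula (\ref{QUADCLASS}). Writing $\dd=\prod_i\ell_i^{d_i}$ and $\dd^{+}=\prod_i\ell_i^{\max(a_i',d_i)}$, formula (\ref{QUADCLASS}) yields $[K(\dd^{+}):K(\dd)]=\prod_{a_i'>d_i>0}\ell_i^{\,a_i'-d_i}\cdot\prod_{a_i'>d_i=0}\ell_i^{\,a_i'-1}\bigl(\ell_i-\leg{\Delta}{\ell_i}\bigr)$ when $\dd>1$, and $\tfrac{2}{w_K}$ times this expression when $\dd=1$; the $\tfrac{2}{w_K}$ discrepancy in the case $\dd=1$ is exactly cancelled by the extra ramification $e=w_\Delta/2$ occurring there, which is why the final formula comes out uniform.

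Finally I would assemble: substituting $\#\beta^{-1}(Q_0)=M\varphi(M)/(ef)$ into $\#\mathcal{F}=2^{\max(s-1,0)}\,M\varphi(M)/(ef)$ and simplifying via the identity above, all that remains is to track residual powers of $2$. These come from three sources: $[K(\ff):\Q(\ff)]=2$ for every $\ff$ (since $\Q(\ff)$ is formally real); the dichotomy of $\Q(Q_0)$, and likewise $\Q(P)$, being a rational ring class field ($s=0$) versus a ring class field ($s\geq1$); and the exceptional behaviour of Theorem \ref{THM8.4}(b) and Corollary \ref{FOMCOR} in the family $M=2$, $\Delta=-4$, where for $a_1\notin\{1,d_1\}$ the residue field $\Q(P)$ jumps from the expected rational ring class field up to the corresponding ring class field. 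Matching these in each configuration shows the net extra factor is precisely $2^{\epsilon}$ for $\epsilon$ as defined. The case $\Delta<-4$ is handled by the same argument, but more easily, using \cite[Prop. 2.2, Prop. 2.10]{Clark22a} in place of Propositions \ref{SAIAPROP1}, \ref{SAIAPROP2} and the absence of ramification over $J_\Delta$. I expect the main obstacle to be exactly this last bookkeeping step: the $\tfrac{w_K}{2}$ in the ring class number formula, the $[K(\ff):\Q(\ff)]=2$ factors, and the ramification indices all interact, and the $M=2$, $\Delta=-4$ family is genuinely exceptional, so one must organize the casework — by whether all $d_i=0$, by the value of $s$, by whether a relevant conductor lies in the class-number-one set $S$, and by the $(2,-4)$-subcases of Theorem \ref{THM8.4} — carefully enough that each case collapses to the single displayed expression.
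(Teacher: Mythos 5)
Your proposal is correct and follows essentially the same route as the paper: decompose $\mathcal{F}$ into the fibers of $\pi\colon X_0(M,N)\to X_0(N)$ over the $2^{\max(s-1,0)}$ points supplied by the preceding theorem, count each fiber as $M\varphi(M)/(ef)$, identify $e=w_\Delta/2$ exactly when $M\geq 2$ and all $d_i=0$ (elliptic image, non-elliptic preimage) and $e=1$ otherwise, and evaluate $e\cdot f$ via the ring class field degree formula, with the extra factor of $2$ in the $(M,\Delta)=(2,-4)$, $a_1\notin\{1,d_1\}$ case accounting for $\epsilon$. Your explicit remark that the $\tfrac{2}{w_K}$ discrepancy at conductor $1$ cancels against the ramification is a slightly more transparent packaging of the same bookkeeping the paper performs in its displayed computation.
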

\begin{proof}
Given a point $P' \in X_0(N)$, Theorem \ref{THM8.4} determines the residue field of any point $Q \in \pi^{-1}(P')$ in terms of $P'$ and $M$, so we know that each point in $\mathcal{F}$ has the same residue field up to isomorphism.   If $\Delta < -4$ then the map $\pi$ is unramified.  If $\Delta = -4$ (resp. $\Delta = -3$), then the map $\pi$ has ramification index $1$ if and only if the path in $\mathcal{G}_{K,\ell,1}$ corresponding to $x_i$ is purely horizontal for each $i$. Therefore, because $M \geq 2$ (see the discussion in \S \ref{X_1_sec} for more details), the point $P$ necessarily has ramification index $e = 2$ (resp. $e=3$) exactly with respect to $\pi$ in this situation, and otherwise has ramification index $1$. In any event, letting $P' \in X_0(N)$ be a point with $\pi_i(P') = P_i$ for all $1 \leq i \leq r$, we must have that the number of points $P \in \mathcal{F}$ lying above $P'$ is 
\[ \dfrac{\text{deg}(\pi)}{e \cdot [\mathbb{Q}(P) : \mathbb{Q}(\pi(P))]} = \dfrac{M  \varphi(M)}{e \cdot [\mathbb{Q}(P) : \mathbb{Q}(\pi(P))]}\]
The $\epsilon = 1$ case, by Theorem \ref{THM8.4}, is exactly the case in which $\mathbb{Q}(P')$ is isomorphic to a rational ring class field while $\mathbb{Q}(P)$ is a ring class field. In the $\epsilon = 0$ case, we then have 
\begin{align*}  e\cdot [\mathbb{Q}(P) : \mathbb{Q}(\pi(P))] &= e \cdot \left[\mathbb{Q}\left(\ell_1^{\text{max}\{a_1',d_1\}} \cdots \ell_1^{\text{max}\{a_1',d_1\}}\right) : \mathbb{Q}\left(\ell_1^{d_1} \cdots \ell_r^{d_r}\right) \right) \\
&= \left( \prod_{i \text{ with } a_i' > d_i = 0} \ell_i^{a_i'-1} \left(\ell_i - \leg{\Delta_K}{  \ell_i}\right)\right) \cdot \left( \prod_{i \text{ with } a_i' > d_i > 0} \ell_i^{a_i'-d_i} \right),
\end{align*}
while the only change in this quantity in the $\epsilon = 1$ case is an additional factor of $2$.  This combined with the result of the previous theorem gives the result as stated.
\end{proof}

\subsection{Primitive Residue Fields and Primitive Degrees I}
In this section and the next, we extend the results of \cite[\S 9.2-9.3]{Clark22a} to handle $\Delta_K \in \{-3, -4\}$. Given our extensions of the results on primitive residue fields of $\Delta$-CM points on $X_0(\ell^{a_1'},\ell^{a_1})$ for $\ell$ prime and on compiling across prime powers this proceeds nearly exactly as therein. 

In this section, as in \cite[\S 9.2]{Clark22a}, we suppose that either $M=1$ or that ($M=2$ and $\Delta$ is even). This assumption implies that there is a closed $\Delta$-CM point on $X_0(M,N)$ with residue field isomorphic to $\Q(N \ff)$, and therefore there is a unique $B \mid N$ such that $\mathbb{Q}(B \ff)$ is a primitive residue field of $\Delta$-CM points on $X_0(M,N)$. For each $1 \leq i \leq r$, take $b_i$ to be the least integer $B_i$ such that $\Q(\ell_i^{B_i} \ff)$ 
is isomorphic to the residue field of 
a $\Delta$-CM point on $X_0(\ell_i^{a_i'},\ell_i^{a_i})$. We then have
\[ B = \ell_1^{b_1} \cdots \ell_r^{b_r}. \]
 There is at most 
one other primitive residue field of a $\Delta$-CM point on $X_0(M,N)$, and there is one other exactly when there are two primitive residue fields for $\Delta$-CM points on $X_0(\ell_i^{a_i'},\ell_i^{a_i})$ for some 
$1 \leq i \leq r$. In this case, letting $c_i$, for $1 \leq i \leq r$, be the least natural number $C_i$ such that there is
a $\Delta$-CM point on $X_0(\ell_i^{a_i'},\ell_i^{a_i})$ with residue field isomorphic to either $\Q(\ell_i^{C_i} \ff)$ or to $K(\ell_i^{C_i} \ff)$, we have that the other primitive residue field is $K(C \ff)$, where
 \[ C = \ell_1^{c_1} \cdots \ell_r^{c_r}. \]
 If there is a unique primitive residue field of $\Delta$-CM points on $X_0(M,N)$, then of course there is a unique primitive degree $[\mathbb{Q}(P) : \mathbb{Q}]$ of such points. Supposing we are in the case of two primitive residue fields $\Q(B\ff)$ and $K(C\ff)$, we 
put 
\[  \bb \coloneqq [\Q(B\ff):\Q] \quad \text{ and } \quad \cc \coloneqq [K(C\ff):\Q] .\]
We will have a unique primitive degree if and only if one of $\bb$ and $\cc$ divides the other, and we will soon see that we always have $\cc \leq \bb$, so the question is whether $\cc \mid \bb$. This divisibility certainly holds if the analogous divisibility holds at every prime power, but as seen in \cite{Clark22a} this is not a necessary condition. The following theorem determines exactly the situation, generalizing \cite[Thm. 9.2]{Clark22a}. The proof is only mildly more complicated than the proof of this prior result, owing to handling the $\Delta = -4$ case. 

\newcommand{\ibullet}{i_{\bullet}}
\newcommand{\jbullet}{j_{\bullet}}
\begin{thm}
\label{COOLTECHTHM}
Let $\Delta = \ff^2 \Delta_K$ be an imaginary quadratic discriminant, and let $M = \ell_1^{a_1'} \cdots \ell_r^{a_r'} \mid N = \ell_1^{a_1} \cdots \ell_r^{a_r}$.  We suppose that either $M = 1$ or ($M=2$ and $\Delta$ is even).  For $1 \leq i \leq r$, let 
$b_i \geq 0$ be the unique natural number such that $\Q(\ell_i^{b_i} \ff)$ occurs up to isomorphism as a primitive residue field of a closed $\Delta$-CM point on $X_0(\ell_i^{a_i'},\ell_i^{a_i})$.  Let $c_i$ be 
equal to $b_i$ if there is a unique primitive residue field of $\Delta$-CM points on $X_0(\ell_i^{a_i'},\ell_i^{a_i})$ and otherwise let it be such that the unique non-real primitive residue field of a closed $\Delta$-CM point 
on $X_0(\ell_i^{a_i'},\ell_i^{a_i})$ is $K(\ell_i^{c_i} \ff)$.  Put $B \coloneqq \ell_1^{b_1} \cdots \ell_r^{b_r}$ and $C \coloneqq \ell_1^{c_1} \cdots \ell_r^{c_r}$.  Let $s$ be the number of $1 \leq i \leq r$ such that 
there is a non-real primitive residue field of a closed $\Delta$-CM point on $X_0(\ell_i^{a_i'},\ell_i^{a_i})$.   
\begin{itemize}
\item[a)] If $s = 0$, the unique primitive residue field of a $\Delta$-CM point on $X_0(M,N)$ is $\Q(B\ff)$, so the unique primitive degree of a $\Delta$-CM point on $X_0(M,N)$ is $[\Q(B \ff):\Q]$.
\item[b)] If $s \geq 1$ and there is some $1 \leq i \leq r$ such that there are two primitive residue fields of closed $\Delta$-CM points on $X_0(\ell_i^{a_i'},\ell_i^{a_i})$ and we are not in Case 1.5b) with respect to $\Delta$ and $\ell_i^{a_i}$,
then:
\begin{itemize}
\item[(i)] There are two primitive residue fields of $\Delta$-CM points on $X_0(M,N)$: $\Q(B \ff)$ and $K(C \ff)$. 
\item[(ii)] The unique primitive degree of $\Delta$-CM points on $X_0(M,N)$ is $[K(C\ff):\Q]$.  
\end{itemize}
\item[c)] If $s \geq 1$ and for all $1 \leq i \leq r$ such that there are two primitive residue fields of closed $\Delta$-CM points on $X_0(\ell_i^{a_i'},\ell_i^{a_i})$ we are in Case 1.5b), 
then there are two primitive degrees of $\Delta$-CM points on $X_0(M,N)$: $[\Q(B \ff):\Q]$ and $[K(C \ff):\Q]$.  
\end{itemize}
\end{thm}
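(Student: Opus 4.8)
The proof follows the template of \cite[Thm. 9.2]{Clark22a}, which is the $\Delta_K < -4$ version of the statement; the genuinely new ingredients are the prime-power analysis of \S 8 (in particular the anomalous Case 1.5b)), the compiling results Theorems \ref{PROP8.3} and \ref{THM8.4}, and the field arithmetic of Propositions \ref{SAIAPROP1} and \ref{SAIAPROP2}. First I would record the two facts from \S 8 on which the argument rests: \emph{(A)} for each $i$ there are at most two primitive residue fields of $\Delta$-CM points on $X_0(\ell_i^{a_i'},\ell_i^{a_i})$, namely a unique totally real one $\Q(\ell_i^{b_i}\ff)$ --- the real residue field there of least conductor --- together with, when it exists, a non-real one $K(\ell_i^{c_i}\ff)$ with $c_i \leq b_i$, and $s$ counts the indices $i$ for which the second field exists; \emph{(B)} one has $[K(\ell_i^{c_i}\ff):\Q] \mid [\Q(\ell_i^{b_i}\ff):\Q]$ whenever both primitive fields exist, \emph{except} in Case 1.5b), where $\ell_i > 2$, $c_i = 0$ and $b_i = a_i - 2\,\ord_{\ell_i}(\ff) > 0$, so that $[\Q(\ell_i^{b_i}\ff):\Q(\ff)] = \ell_i^{b_i}$ is odd while $[K(\ell_i^{0}\ff):\Q(\ff)] = 2$. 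I would also note, from the path analysis, that in Case 1.5b) the field $\Q(\ff)$ is not the residue field of any $\Delta$-CM point on $X_0(\ell_i^{a_i'},\ell_i^{a_i})$.

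The next step is to run both directions of compiling. In the ``realize a field'' direction: assembling at each $i$ a point realizing the minimal real field $\Q(\ell_i^{b_i}\ff)$ by a path with no surface edge in the split case produces, via Theorem \ref{PROP8.3} (or \ref{THM8.4}) together with Proposition \ref{SAIAPROP2} --- or \S 7.1 when $\Delta<-4$ --- a point on $X_0(M,N)$ with residue field $\Q(B\ff)$; and if $s\geq 1$, taking the non-real primitive field at the indices where one exists and the minimal real field elsewhere produces, via Proposition \ref{SAIAPROP1}(a), a point with residue field $K(C\ff)$. In the ``analyze a point'' direction, Theorems \ref{PROP8.3} and \ref{THM8.4} show that every real residue field on $X_0(M,N)$ has conductor divisible by $B\ff$ (each of its prime-power projections is a real residue field, hence of conductor divisible by $\ell_i^{b_i}\ff$) and every non-real one has conductor divisible by $C\ff$ (using $c_i\leq b_i$). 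Thus $\Q(B\ff)$ embeds into every real residue field and $K(C\ff)$ into every non-real one, so these are the only candidates for primitive residue fields; each is then primitive once one checks that no residue field of strictly smaller degree embeds into it, the only nonobvious point being that $\Q(C\ff)$ is not a residue field --- which follows from the Case 1.5b) remark of the first step.

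It remains to compare degrees, using (\ref{QUADCLASS}) and Propositions \ref{SAIAPROP1}, \ref{SAIAPROP2}: here $\mathfrak{d}$ agrees with $\tfrac{2}{w_K}\,g$ for a fixed multiplicative function $g$ on all integers $\geq 2$, and $[K(\ff'):\Q(\ff')] = 2$ for every $\ff'$. In case (a), $s = 0$, so $X_0(M,N)$ has no non-real $\Delta$-CM residue field and $\Q(B\ff)$ is the unique primitive residue field, hence $[\Q(B\ff):\Q]$ the unique primitive degree. In case (b), the ratio $[\Q(B\ff):\Q]/[\Q(C\ff):\Q]$ is, by multiplicativity of $g$, a product of local contributions, and fact (B) at the hypothesized index (which has two primitive residue fields and is not in Case 1.5b)) forces one of these contributions to be even; hence $[K(C\ff):\Q] = 2[\Q(C\ff):\Q]$ divides $[\Q(B\ff):\Q]$ --- exactly the mechanism of \cite[Thm. 9.2]{Clark22a} --- and since every residue field contains a copy of $\Q(B\ff)$ or of $K(C\ff)$, every $\Delta$-CM degree on $X_0(M,N)$ is a multiple of $[K(C\ff):\Q]$, the unique primitive degree. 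In case (c), every index with two primitive residue fields is in Case 1.5b), hence contributes an \emph{odd} prime power $>1$ to $[\Q(B\ff):\Q]/[\Q(C\ff):\Q]$, so $[\Q(B\ff):\Q]$ is an odd proper multiple of $[\Q(C\ff):\Q]$ while $[K(C\ff):\Q] = 2[\Q(C\ff):\Q]$; neither of $[\Q(B\ff):\Q]$, $[K(C\ff):\Q]$ divides the other, and since every residue field contains a copy of $\Q(B\ff)$ or of $K(C\ff)$, these are exactly the two primitive degrees.

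The main obstacle is not a single hard step but the combinatorial bookkeeping: carrying the many cases of \S 8 through the compiling process, keeping track at each prime power of which primitive residue field is real and which is not, handling conductor arithmetic in the presence of the failure of multiplicativity of $\mathfrak{d}$ at $1$, and correctly isolating Case 1.5b) --- together with the exceptional $\Delta = -4$, $M = 2$ branch of Theorem \ref{THM8.4}(b) --- as the sole source of a second primitive degree.
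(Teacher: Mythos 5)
Your proposal follows the same route as the paper's proof: identify $\Q(B\ff)$ and $K(C\ff)$ as the at most two primitive residue fields via the compiling theorems, reduce the one-versus-two primitive degrees question to whether $\cc = [K(C\ff):\Q]$ divides $\bb = [\Q(B\ff):\Q]$, and settle this $2$-adically from the local data at each prime power, with Case 1.5b) as the unique local obstruction. Part a) is the paper's argument verbatim, and part c) is essentially right, though you should say explicitly that Case 1.5b) forces $\ord_{\ell_i}(\ff) \geq 1$ for an odd split prime $\ell_i$, hence $\ff > 2$; it is this observation that makes your ``product of local contributions'' identity literally true in case c) and rules out the exceptional discriminants below.

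The genuine gap is in case b), in the sentence asserting that $[\Q(B\ff):\Q]/[\Q(C\ff):\Q]$ is ``by multiplicativity of $g$, a product of local contributions.'' This fails precisely for $\Delta = -4$ (i.e.\ $\ff = 1$, $\Delta_K = -4$), which is within the scope of the theorem. When $\ff = 1$ the conductors $\ell_1^{b_1},\dots,\ell_r^{b_r}$ are pairwise coprime, so by Proposition \ref{SAIAPROP2}d) the compositum $\Q(\ell_1^{b_1})\cdots\Q(\ell_r^{b_r})$ has index $(w_K/2)^{r_{\bb}-1} = 2^{r_{\bb}-1}$ in $\Q(B)$, where $r_{\bb}$ counts the $i$ with $\ell_i^{b_i} \notin S$; the corresponding index for $C$ is $2^{r_{\cc}-1}$, and $r_{\cc}$ can be strictly smaller than $r_{\bb}$ (e.g.\ Case 1.2 gives $c_i = 0 < b_i = a_i$). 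Hence the true ratio equals $2^{\,r_{\bb}-r_{\cc}}$ times your product of local contributions, and your $2$-adic count is off by exactly this factor; this is why the paper's proof needs its entire Case 2, introducing $r_{\bb}$ and $r_{\cc}$. (For $\Delta = -3$ the discrepancy is a power of $3$ and is invisible $2$-adically; for $\Delta = -16$ the shared factor $\ff = 2$ makes all the relevant gcd's exceed $1$, so Proposition \ref{SAIAPROP1} gives exact multiplicativity; for $\Delta_K < -4$ there is no discrepancy at all.) The gap is repairable --- since $c_i \leq b_i$ one has $r_{\cc} \leq r_{\bb}$, so the correction is a nonnegative power of $2$ and the evenness you extract from fact (B) at the good index survives --- but as written the argument establishes nothing for $\Delta = -4$, and your closing paragraph acknowledges the failure of multiplicativity of $\mathfrak{d}$ without actually carrying out this step.
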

\begin{proof}
The case $s = 0$ is immediate from the above discussion.  Henceforth we suppose $s \geq 1$. We then have (up to isomorphism) two primitive 
residue fields of $\Delta$-CM closed points on $X_0(M,N)$: $\Q(B \ff)$ and $K(C \ff)$, and as above we put
\[ \bb \coloneqq [\Q(B \ff) : \mathbb{Q}], \ \cc \coloneqq [K(C \ff) : \mathbb{Q}]. \]
For each $1 \leq i \leq r$, let $F_i$ be a primitive residue field of a closed point of a $\Delta$-CM elliptic curve on $X_0(\ell_i^{a_i'},\ell_i^{a_i})$; if there is any non-real such field, take $F_i$ to be nonreal.  Note that for each $i$ such that there are two primitive residue fields $\Q(\ell_i^{b_i} \ff)$ and $K(\ell_i^{c_i} \ff)$ we have $[K(\ell_i^{c_i} \ff):\Q] \leq [\Q(\ell_i^{b_i} \ff):\Q$]. By Propositions \ref{SAIAPROP1} and \ref{SAIAPROP2}, there is $0 \leq r' \leq r-1$ such that 
\begin{align*}
2^{s-1} \cdot [K(C\ff) : \mathbb{Q}(\ff)] &= \left( \frac{\omega_K}{2}\right)^{r'} \cdot \left( \dim_{\Q(\ff)} F_1 \otimes_{\Q(\ff)} \otimes \cdots \otimes_{\Q(\ff)} F_r \right)\\
&\leq \left( \frac{\omega_K}{2}\right)^{r'} \cdot \left( \dim_{\Q(\ff)} \Q(\ell_1^{b_1} \ff) \otimes_{\Q(\ff)} \cdots \otimes_{\Q(\ff)} \Q(\ell_r^{b_r} \ff) \right)\\
&= [\Q(\ell_1^{b_1} \ff) \cdots \Q(\ell_r^{b_r}) : \mathbb{Q}(\ff)]. 
\end{align*}
It follows that $\cc \leq \bb$.  Thus there is a unique primitive degree exactly when $\cc \mid \bb$, as claimed in the above discussion. 

Because $K(C \ff) \subseteq K(B \ff) = K \Q(B \ff)$, we have $\mathbf{c} \mid 2 \mathbf{b}$.  In particular, we have $\ord_p(\mathbf{c}) \leq \ord_p(\mathbf{b})$ 
for every odd prime $p$.  \\
\textbf{Case 1}: Suppose $\Delta_K \neq -4$.  By Proposition \ref{TEDIOUSALGEBRAPROP1} we have 
\[ \ord_2(\mathbf{c})= 1 + \ord_2([\Q(C\ff):\Q(\ff)]) = 1 + \sum_{i=1}^r\  [\Q(\ell^{c_i} \ff):\Q(\ff)] \]
\[ \ord_2(\mathbf{b}) = \ord_2([\Q(B \ff):\Q(\ff)]) = \sum_{i=1}^r \ [\Q(\ell^{b_i} \ff:\Q(\ff)]. \]
It follows that $\mathbf{c} \mid \mathbf{b}$ if and only if there is some $1 \leq i \leq r$ such that there are two primitive residue fields of $\Delta$-CM closed points on $X_0(\ell_i^{a_i'},\ell_i^{a_i})$ 
for which we have 
\[\ord_2([\Q(\ell^{c_i} \ff):\Q(\ff)]) < \ord_2([\Q(\ell^{b_i} \ff):\Q(\ff)], \]
which holds if and only if
\[\ord_2([K(\ell^{c_i} \ff):\Q(\ff)]) \leq \ord_2([\Q(\ell^{b_i} \ff):\Q(\ff)]. \]
This holds in every case in which there are two primitive residue fields \emph{except} Case 1.5b).

\noindent
\textbf{Case 2:} Suppose $\Delta_K = -4$. Let $r_\mathbf{c}$ be the number of indices $1 \leq i \leq r$ such that ${\ell_i}^{2c_i}\Delta_K \not \in \{-4,-16\}$, and let $r_\mathbf{b}$ be the number of indices $1 \leq i \leq r$ such that ${\ell_i}^{2b_i}\Delta_K \not \in \{-4,-16\}$. We have $0 \leq r_\mathbf{c} \leq r_\mathbf{b} \leq r$. Proposition \ref{SAIAPROP1} then gives:
\begin{align*}
\text{ord}_2(\mathbf{c}) &= 1 + \text{ord}_2\left( [\mathbb{Q}(C) : \mathbb{Q}] \right) \\
&= 1 + \text{ord}_2\left( [\mathbb{Q}(C) : \mathbb{Q}({\ell_1}^{c_1})\cdots \mathbb{Q}({\ell_r}^{c_r} )] \right) + \text{ord}_2\left( [\mathbb{Q}({\ell_1}^{c_1})\cdots \mathbb{Q}({\ell_r}^{c_r} ) : \mathbb{Q}] \right) \\
&= r_{\cc} + \sum_{i=1}^{r} \text{ord}_2\left([\mathbb{Q}({\ell_i}^{c_i}) : \mathbb{Q}]\right)
\end{align*}
and 
\begin{align*}
\text{ord}_2(\bb) &= \text{ord}_2\left( [\mathbb{Q}(B) : \mathbb{Q}] \right) \\
&= \text{ord}_2\left( [\mathbb{Q}(B) : \mathbb{Q}({\ell_1}^{b_1})\cdots \mathbb{Q}({\ell_r}^{b_r} )] \right) + \text{ord}_2\left( [\mathbb{Q}({\ell_1}^{b_1})\cdots \mathbb{Q}({\ell_r}^{b_r} ) : \mathbb{Q}] \right) \\
&= r_{\bb} - 1 + \sum_{i=1}^{r} \text{ord}_2\left([\mathbb{Q}({\ell_i}^{b_i}) : \mathbb{Q}]\right).
\end{align*}
We see then that $\mathbf{c} \mid \mathbf{b}$ if and only if 
\[ \sum_{i=1}^r \text{ord}_2\left( [\mathbb{Q}({\ell_i}^{c_i}) : \mathbb{Q}]\right) < r_\mathbf{b} - r_\mathbf{c} + \sum_{i=1}^r \text{ord}_2\left( [\mathbb{Q}({\ell_i}^{b_i}) : \mathbb{Q}]\right). \]
We find that $\cc \mid \bb$ if and only if $r_{\bb} > r_{\cc}$ or there is some $1 \leq i \leq r$ such that $c_i < b_i$ for which we are 
\emph{not} in Case 1.5b) with respect to $\Delta$ and $\ell_i^{a_i}$.  Comparing with the statement of the result, we must show: 
in every case in which $r_{\bb} > r_{\cc}$ there is some $1 \leq i \leq r$ for which $c_i < b_i$ and we are not in Case 1.5b) for 
$\Delta$ and $\ell_i^{a_i}$. So: \\
$\bullet$ If $\Delta \notin \{-4,-16\}$, then $r_{\bb} = r_{\cc}$, so there are two primitive degrees if and only if we are in Case 1.5b) 
for all $1 \leq i \leq r$ for which $c_i < b_i$, as claimed.  \\
$\bullet$ If $\Delta \in \{-4,-16\}$ then Case 1.5b) cannot occur for any $i$ and thus $\cc \mid \bb$, as claimed.
\end{proof}

\subsection{Primitive Residue Fields and Primitive Degrees II} In this section we treat the case in which either $M \geq 3$, or ($M = 2$ and $\Delta$ is odd).  Thanks to the work of \S 7, this case follows exactly as in \cite[\S 9.3]{Clark22a}. In particular, our assumptions imply that there is a unique primitive residue 
field, which is a ring class field $K(C\ff)$.  

Let $M = \ell_1^{a_1'} \cdots \ \ell_r^{a_r'}$ and $N = \ell_1^{a_1} \cdots \ell_r^{a_r}$.  For an index $i \in \{1,\ldots, r\}$, if the only primitive residue field of a $\Delta$-CM point on 
$X_0(\ell_i^{a_i'},\ell_i^{a_i})$ is $\Q(\ell^c \ff)$ then put $c_i \coloneqq c$.  Otherwise, the primitive residue fields of $\Delta$-CM points on $X_0(\ell_i^{a_i'},\ell_i^{a_i})$ are of the form $\Q(\ell^b \ff)$ and $K(\ell^c \ff)$, and we put $c_i \coloneqq c$. We then have
\[ C = \ell_1^{c_i} \cdots \ell_r^{c_r}. \]

\end{document}